\title[Topological rigidity with discrete singular set]{Topological rigidity and actions\\ on contractible manifolds\\ with discrete singular set}
\author[F Connolly]{Frank Connolly}
\address{Department of Mathematics, University of Notre Dame, Notre Dame IN 46556 USA}
\email{connolly.1@nd.edu}
\thanks{F Connolly was partly supported by Grant DMS-0601234 of the National Science Foundation.}
\author[J\,F Davis]{James F Davis}
\address{Department of Mathematics, Indiana University, Bloomington IN 47405 USA}
\email{jfdavis@indiana.edu}
\thanks{J\,F Davis was partly supported by Grant DMS-1210991 of the National Science Foundation.}
\author[Q Khan]{Qayum Khan}
\address{Department of Mathematics, Saint Louis University, St Louis MO 63103 USA}
\email{khanq@slu.edu}
\thanks{Q Khan was partly supported by Grant DMS-0904276 of the National Science Foundation.}
\subjclass[2010]{Primary 57S30, 57R91; Secondary 19J05, 19J25}
\newtheorem{thm}{Theorem}[section]
\newtheorem{cor}[thm]{Corollary}
\newtheorem{lem}[thm]{Lemma}
\newtheorem{prop}[thm]{Proposition}
\newtheorem*{hyp}{Main Hypotheses}
\theoremstyle{definition}
\newtheorem{rem}[thm]{Remark}
\newtheorem*{undefn}{Definition}
\DeclareMathAlphabet{\matheurm}{U}{eur}{m}{n}
\newcommand{\all}{\matheurm{all}}
\newcommand{\fbc}{\matheurm{fbc}}
\newcommand{\fin}{\matheurm{fin}}
\newcommand{\mfin}{\matheurm{mfin}}
\newcommand{\mvc}{\matheurm{mvc}}
\newcommand{\Spectra}{\matheurm{Spectra}}
\newcommand{\vc}{\matheurm{vc}}
\newcommand{\R}{\mathbb{R}}
\newcommand{\Z}{\mathbb{Z}}
\newcommand{\bK}{\mathbf{K}}
\newcommand{\bL}{\mathbf{L}}
\newcommand{\bP}{\mathbf{P}}
\newcommand{\mcC}{\mathcal{C}}
\newcommand{\mcF}{\mathcal{F}}
\newcommand{\mcG}{\mathcal{G}}
\newcommand{\mcN}{\mathcal{N}}
\newcommand{\mcO}{\mathcal{O}}
\newcommand{\mcS}{\mathcal{S}}
\newcommand{\G}{{\Gamma}}
\newcommand{\p}{{\partial}}
\DeclareMathOperator{\Aut}{Aut}
\DeclareMathOperator{\CAT}{CAT}
\DeclareMathOperator{\Cl}{Cl}
\DeclareMathOperator{\Cyl}{Cyl}
\DeclareMathOperator{\id}{id}
\DeclareMathOperator{\Int}{Int}
\DeclareMathOperator{\midd}{mid}
\DeclareMathOperator{\Or}{Or}
\DeclareMathOperator{\Torus}{Torus}
\DeclareMathOperator{\UNil}{UNil}
\DeclareMathOperator{\Wh}{Wh}
\newcommand{\TOP}{{\mathrm{TOP}}}
\DeclareMathOperator*{\colim}{colim}
\newcommand{\bdry}{\partial}
\newcommand{\connective}{\langle 1 \rangle}
\newcommand{\eps}{\varepsilon}
\newcommand{\infdec}{{-\infty}}
\newcommand{\iso}{\cong}
\newcommand{\longra}{\longrightarrow}
\newcommand{\ol}[1]{\overline{#1}}
\newcommand{\rel}{\;\mathrm{rel}\;}
\newcommand{\tp}{$\,{}^{\prime}$} 
\newcommand{\ul}[1]{\underline{#1}}
\newcommand{\utilde}{^{\sim}}
\newcommand{\wh}[1]{\widehat{#1}}
\newcommand{\wt}[1]{\widetilde{#1}}
\begin{document}

\begin{abstract}
The problem of equivariant rigidity is the $\Gamma$-homeomorphism classification of $\Gamma$-actions on manifolds with compact quotient and with contractible fixed sets for all finite subgroups of $\Gamma$.
In other words, this is the classification of cocompact $E_\fin \Gamma$-manifolds.

We use surgery theory, algebraic $K$-theory, and the Farrell--Jones Conjecture to give this classification for a family of groups which satisfy the property that the normalizers of nontrivial finite subgroups are themselves finite.
More generally, we study  cocompact proper actions of these groups on contractible manifolds and prove that the $E_\fin$ condition is always satisfied. 
\end{abstract}
\maketitle

\section{Introduction}

\begin{undefn}
Let $\G$ be a discrete group.
Define $\mcS(\Gamma)$ to be the set of equivariant homeomorphism classes $[M,\G]$ of contractible topological manifolds $M$ equipped with an effective cocompact proper $\Gamma$-action.
The \emph{singular set} and \emph{free set} are
\begin{eqnarray*}
M_{sing} &:=&  \{x \in M \mid \exists g\neq 1 : gx=x\}\\
M_{free} &:=&  \{x \in M \mid \forall g\neq 1 : gx \neq x\}.
\end{eqnarray*}
\end{undefn}

When $\G$ is a torsion-free group of isometries of a nonpositively curved Riemannian manifold $M$, with $[M,\G]\in\mcS(\G)$, an inspirational result of
Farrell and Jones \cite{FJ_rigidity} says that $\mcS(\G)$ consists of the single element $[M,\G]$.
(See \cite{BL_CAT0} for the generalization to the $\CAT(0)$ case.) 
In these cases, $M_{sing}$ is empty. 

In this paper we compute $\mcS(\G)$ when $\G$ is a group of isometries of a nonpositively curved Riemannian manifold $M$ where $M_{sing}$ is discrete (that is, the action $\Gamma \curvearrowright M$ is \emph{pseudo-free}).
One may speculate that  $\mcS(\G)$ consists of just one element.
However, for many years experts have suspected that Cappell's $\UNil$-groups (see Quinn \cite{Quinn_ICM}) obstruct this hope.  
We fully calculate $\mcS(\G)$.
 
\begin{thm}[Main Result]\label{thm:main}
Let $\Gamma$ be a group satisfying Hypotheses (ABC) or more generally (ABC\tp) below, with $\G$ of virtual cohomological dimension $n \geqslant 5$.
Write $\eps:= (-1)^n$.
There is a bijection
\[\begin{tikzcd}
\displaystyle\bigoplus_{(\midd)(\Gamma)} \UNil_{n+\eps}(\Z; \Z, \Z) \rar{\bdry}[swap]{\approx} & \mcS(\Gamma).
\end{tikzcd}\]
The set of conjugacy classes of maximal infinite dihedral subgroups of~$\Gamma$ is $(\midd)(\Gamma)$.
\end{thm}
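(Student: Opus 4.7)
The plan is to realize $\mcS(\G)$ as the structure set of an equivariant topological surgery exact sequence and then to evaluate the algebraic obstruction groups by invoking the Farrell--Jones Conjecture, which is built into Hypothesis~(ABC)/(ABC\tp). First, I would pick any base point $[M_0,\G]\in\mcS(\G)$ coming from the assumed geometric action, with $\dim M_0 = n$. Pseudo-freeness of the action and the finite-normalizer condition on nontrivial finite subgroups ensure that the orbit space $M_0/\G$ is a closed ANR homology manifold with only finitely many isolated singular points, each with finite local isotropy. Using the equivariant topological surgery package for manifolds with finite isotropy (after Weinberger, Yan, and others), I would embed $\mcS(\G)$ as a pointed set based at $[M_0,\G]$ into a long surgery exact sequence of the form
\[\cdots \longra H^\G_{n+1}\bigl(E_\fin\G;\bL^\infdec_\Z\bigr) \longra \mcS(\G) \longra \mcN^\G(M_0) \longra H^\G_n\bigl(E_\fin\G;\bL^\infdec_\Z\bigr) \longra \cdots,\]
in which the right-hand map is the $E_\fin\G$-assembly composed with the natural transformation from the equivariant normal invariants.

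Next, I would compare $E_\fin\G$- with $E_\vc\G$-assembly. Hypothesis~(ABC)/(ABC\tp) supplies the Farrell--Jones isomorphism
\[H^\G_*\bigl(E_\vc\G;\bL^\infdec_\Z\bigr) \xrightarrow{\;\iso\;} L^\infdec_*(\Z\G),\]
so the defect of the $E_\fin$-assembly is captured by the relative homology $H^\G_*(E_\vc\G,E_\fin\G;\bL^\infdec_\Z)$. A L\"uck--Weiermann-type decomposition splits this relative term as a direct sum indexed by conjugacy classes of maximal infinite virtually cyclic subgroups of $\G$. Contractibility of $M_0$ together with the finite-normalizer condition collapse the normal invariants term $\mcN^\G(M_0)$, so the boundary map of the surgery sequence identifies $\mcS(\G)$ with this relative term in degree $n+1$.

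Finally, I would evaluate the virtually cyclic summands. Under the normalizer-finite hypothesis, every infinite virtually cyclic subgroup of $\G$ is either infinite cyclic (type~I) or infinite dihedral (type~II), and the conjugacy classes of maximal type~II subgroups are precisely the elements of $(\midd)(\G)$. The type~I relative summands vanish by the classical Bass--Heller--Swan and Shaneson splittings in $L$-theory, while each type~II conjugacy class contributes exactly $\UNil_{n+\eps}(\Z;\Z,\Z)$ by Cappell's splitting theorem with decorations as refined by Connolly--Ranicki and by Banagl--Ranicki, the sign $\eps=(-1)^n$ arising from the dimension shift. Summing yields the stated bijection, and geometrically the boundary $\bdry$ realizes each UNil class via Cappell's connected-sum construction along the arc fixed by the corresponding $D_\infty$-subgroup.

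The main obstacle I anticipate is the first step: precisely formulating and verifying the equivariant topological surgery long exact sequence in the presence of finite isotropy on an isolated singular set, and matching its $K$-theoretic decorations on $L^\infdec$ with those required for the Farrell--Jones identification. In particular, one must show that contributions of individual finite isotropy groups to the $L$-theory side of the surgery sequence are absorbed into the $E_\fin\G$-assembly rather than appearing as independent pieces of the structure set. Once this bridge is in place, the remainder of the argument is assembly-theoretic bookkeeping.
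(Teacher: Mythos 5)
Your overall strategy---identify $\mcS(\G)$ with a relative equivariant $L$-homology group, then decompose by L\"uck--Weiermann and evaluate the virtually cyclic summands as $\UNil$---is the correct skeleton, and it does lead to the right answer. However, the route you propose (a black-box ``equivariant topological surgery exact sequence'' \`a la Weinberger/Yan with finite isotropy on a discrete singular set) is genuinely different from the paper's, and you have correctly flagged the hard part: there is no off-the-shelf such sequence whose terms and decorations match what you need. The paper avoids this entirely by (i) first proving that the forgetful maps $\mcS^{iso}(X,\G)\to\mcS(X,\G)\to\mcS(\G)$ are bijections (this uses the equivariance-to-isovariance improvement results and is a step you skip), and (ii) using Siebenmann end theory to produce a compact $\bdry$-manifold $(\ol{X},\bdry\ol{X})$ with $\Int\ol{X}=X_{free}/\G$, thereby trading the equivariant surgery problem for the \emph{classical} structure set $\mcS^h_\TOP(\ol{X},\bdry\ol{X})$ of a pair. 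The Siebenmann completion is not free: its existence requires the vanishing of Wall finiteness obstructions and uses the Whitehead-group splitting $\bigoplus_j\Wh_q(\bdry_j\ol{X})\xrightarrow{\;\iso\;}\Wh_q(\ol{X})$ coming from the Farrell--Jones Conjecture in $K$-theory plus Property~$NM_{1\subset\fin}$. This same isomorphism drives the Rothenberg argument that $\mcS^{per,h}_{*}(\ol{X},\bdry\ol{X})\cong\mcS^{per,\infdec}_{*}(\ol{X},\bdry\ol{X})$, and it is the reason the final theorem requires no ``simple'' hypothesis. Your proposal defers all of this $K$-theoretic bookkeeping to a single sentence at the end, but it is load-bearing.

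There is also a concrete error in the middle of your argument. The surgery exact sequence you write has $H^\G_{*}(E_\fin\G;\bL^\infdec_\Z)$ appearing twice, in the roles of both the ``surgery obstruction group'' and the ``target of the surgery obstruction map''; the latter should be $L^\infdec_*(\Z\G)$, and the normal invariant term should be identified with $H^\G_n(E_\fin\G;\bL)$. Consequently, the claim that ``contractibility of $M_0$ together with the finite-normalizer condition collapse the normal invariants'' is false: the normal invariants do not vanish. What is true is that, because $M$ models $E_\fin\G$ and the Farrell--Jones assembly $H^\G_*(E_\vc\G;\bL)\to L_*(\Z\G)$ is an isomorphism, the structure set sits as the cofiber of the relative assembly $H^\G_*(E_\fin\G;\bL)\to H^\G_*(E_\vc\G;\bL)$, giving $\mcS(\G)\cong H^\G_{n+1}(E_\vc\G,E_\fin\G;\ul\bL)$. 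This is what the paper establishes (in the form $H^\G_{n+1}(E_\all\G,E_\fin\G;\ul\bL)$) by matching the long exact sequence of the pair $(\ol{X},\bdry\ol{X})$ in $\mcS^{per,\infdec}_*$ against the long exact sequence of $(cX,X,X_{free})$ in $\G$-equivariant $\bL$-homology. Once you have the correct relative term, your L\"uck--Weiermann decomposition and the evaluation of the infinite-cyclic summands (vanishing) and infinite-dihedral summands (each contributing $\UNil_{n+\eps}(\Z;\Z,\Z)$ via Cappell) match the paper's Section~6, although the paper also has to verify the self-normalizing property $NM_{\fbc\subset\vc}$ and the transitivity/induction axioms, which you should not treat as automatic.
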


These unitary nilpotent groups have been computed  by Banagl, Connolly, Davis, Ko\'zniewski, and Ranicki \cite{CK_nilgps, CD, CR, BR}.
These abelian groups are
\begin{equation*}
\UNil_m(\Z;\Z,\Z) ~\cong~ \begin{cases}
0 & \text{if } m \equiv 0 \pmod{4}\\
0 & \text{if } m \equiv 1 \pmod{4}\\
(\Z/2\Z)^\infty & \text{if } m \equiv 2 \pmod{4}\\
 (\Z/2\Z \oplus \Z/4\Z)^\infty & \text{if } m \equiv 3 \pmod{4}.
\end{cases}
\end{equation*}
Thus $\mcS(\G)$ is a singleton if $n\equiv 0,1 \pmod{4}$ or if $\G$ has no elements of order 2.

\begin{hyp}
Let $\G$ be a discrete group.
\begin{enumerate}
\item[(A)]
There exists a torsion-free subgroup $\Gamma_0$ of $\Gamma$ with finite index.

\item[(B)]
The normalizer $N_\Gamma(H)$ is finite for each nontrivial finite subgroup $H$ of $\Gamma$.

\item[(C)]
There is a contractible Riemannian manifold $X$ of nonpositive sectional curvature with an effective cocompact proper action $\Gamma \curvearrowright X$ by isometries.
\end{enumerate}

\noindent A more general but less geometric condition than (C) is (C\tp), consisting of:
\begin{enumerate}
\item[(C\tp i)]
There exists an element $[X, \Gamma]\in \mcS(\Gamma)$ for which the quotient space $X_{free}/\Gamma$ has the homotopy type of a finite CW complex.

\item[(C\tp ii)]
Each infinite dihedral subgroup of $\Gamma$ lies in a unique maximal infinite dihedral subgroup.

\item[(C\tp iii)]
$\Gamma$ satisfies the Farrell--Jones Conjecture in $K$-theory and $L$-theory.
\end{enumerate}
\end{hyp}

\begin{rem}
For comparative purposes, we make some comments here.
\begin{itemize}
\item
Proposition~\ref{prop:onepoint} shows that if $\G$ satisfies Hypothesis~(A) and 
$[M,\G] \in\mcS(\G)$, then $\G$ satisfies Hypothesis~(B)  if and only if $M_{sing}$ is a discrete
set.

\item
Suppose $\Gamma$ satisfies Hypotheses~(AB).
Corollary~\ref{cor:phi_bijects} shows that any two $\G$-manifolds  in $\mcS(\G)$ model $E_\fin\G$ and hence are $\G$-homotopy equivalent.

\item
(ABC) $\Longrightarrow$ (ABC\tp):
Suppose $\G$ satisfies Hypotheses (AB) and $X$ satisfies Hypothesis~(C).
Then clearly $[X,\G] \in \mcS(\G)$ and $X_{free}/\G$ has the homotopy type of the finite CW complex $(X - N(X_{sing}))/\G$.
Thus Hypothesis~(C\tp i) holds.
Let $\Delta$ be a infinite dihedral subgroup of $\Gamma$.
Then $\Delta$ stabilizes a unique geodesic $\gamma$, namely the geodesic which contains the fixed points of the involutions of $\Delta$, noting the fixed set of each involution in $\Delta$ is a point.
The stabilizer of $\gamma$ is the unique maximal infinite dihedral subgroup of $\G$ containing $\Delta$.
Thus Hypothesis~(C\tp ii) holds.
Finally Hypothesis~(C\tp iii) holds by \cite{BL_CAT0} and \cite{Wegner}.

\item
In the proof of Theorem~\ref{thm:main} we will only refer to Hypotheses (ABC\tp).  

\item
Theorem~\ref{thm:main} was proven for the case $(X,\G)=(\R^n,\Z^n \rtimes_{-1} C_2)$ in \cite{CDK}.
\end{itemize}
\end{rem}

We explain how $\mcS(\G)$ is the isovariant structure set $\mcS^{iso}(X,\G)$ of any $[X,\G]$.

\begin{undefn}
Let $(X,\G)$ be a cocompact $\G$-manifold.
A \emph{structure} on $(X,\G)$ is a $\G$-equivariant homotopy equivalence $f:M\to X$ of proper cocompact $\G$-manifolds.
If $f$ is an isovariant homotopy equivalence, we call it an \emph{isovariant structure}.
Two such structures (resp.~isovariant structures), $(M,f,\G)$ and $(M',f',\G)$ are \emph{equivalent} if there is an equivariant homeomorphism $h:M\to M'$
and a $\G$-equivariant homotopy (resp. $\G$-isovariant homotopy) from $f'\circ h$ to $f$.
Define the \emph{structure set} $\mcS(X,\G)$ (resp.~\emph{isovariant structure set} $\mcS^{iso}(X,\G)$) as the set of equivalence classes of structures
(resp.~isovariant structures) on $(X,\G)$.
\end{undefn}

\begin{thm}\label{thm:equi-iso}
Suppose $\G$ satisfies Hypotheses~(AB).
Assume there is an element $[X^n,\G] \in \mcS(\G)$ such that $n \geqslant 4$.
Then the forgetful maps are bijections:
\[
\begin{tikzcd}\mcS^{iso}(X, \G) \rar{\psi} & \mcS(X, \G) \rar{\phi} & \mcS(\G)\end{tikzcd} ;~
[M,f,\G] ~\longmapsto~ [M,f,\G] ~\longmapsto~ [M,\G].
\]
\end{thm}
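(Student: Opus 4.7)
The plan is to leverage two facts. First, under Hypotheses~(AB), every $[N,\G]\in\mcS(\G)$ is a model of the classifying space $E_\fin\G$ (Corollary~\ref{cor:phi_bijects}) and has discrete singular set (Proposition~\ref{prop:onepoint}); in particular $X$ itself is such a model. The universal property of $E_\fin\G$ then forces any $\G$-map between two $E_\fin\G$-models to be a $\G$-homotopy equivalence, unique up to $\G$-homotopy. Second, between such manifolds any $\G$-equivariant map can be equivariantly homotoped to an isovariant one, via a local transversality argument at the discrete singular points.

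Bijectivity of $\phi$ is immediate from the universality. For surjectivity, given $[M,\G]\in\mcS(\G)$ any $\G$-map $f\colon M\to X$ exists and is a $\G$-homotopy equivalence since both $M$ and $X$ are $E_\fin\G$-models, so $[M,f,\G]$ is a preimage. For injectivity, if $\phi[M,f,\G]=\phi[M',f',\G]$ is witnessed by a $\G$-equivariant homeomorphism $h\colon M\to M'$, the two $\G$-maps $f$ and $f'\circ h$ from $M$ to $X$ are $\G$-homotopic by the same universality, so $[M,f,\G]=[M',f',\G]$ in $\mcS(X,\G)$.

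For $\psi$, the content reduces to an \emph{isovariance lemma}: (i)~every $\G$-equivariant map $f\colon M\to X$ is $\G$-equivariantly homotopic to an isovariant map; and (ii)~any $\G$-equivariant homotopy between two isovariant maps $M\to X$ is, rel endpoints, $\G$-equivariantly homotopic to a $\G$-isovariant homotopy. Granting~(i), surjectivity of $\psi$ follows by replacing $f$ with a $\G$-homotopic isovariant map $\widetilde{f}$, so that $[M,\widetilde{f},\G]$ maps to $[M,f,\G]$. Granting~(ii), injectivity of $\psi$ follows by upgrading the $\G$-equivariant homotopy in the defining equivalence of two isovariant structures to a $\G$-isovariant one.

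The isovariance lemma is local at singular points. By cocompactness, $M_{sing}$ and $X_{sing}$ each decompose into finitely many $\G$-orbits with finite isotropy groups $H_i$; $\G$-equivariance together with pseudo-freeness force $f$ to match the orbits bijectively, sending a chosen representative $m_i$ to $x_i:=f(m_i)$. Pick disjoint $\G$-invariant open neighborhoods, each component an $H_i$-invariant chart around $m_i$ or $x_i$ on which $H_i$ acts with a unique fixed point. Outside these neighborhoods the discrete closed set $X_{sing}$ has codimension $n\geqslant 4$, and $f$ is pushed off $X_{sing}$ by topological transversality, carried out $\G$-equivariantly via descent to the free quotient. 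Inside each $U_i$, we $H_i$-equivariantly deform $f|_{U_i}$ so that $f^{-1}(x_i)\cap U_i=\{m_i\}$: the restriction to $U_i\setminus\{m_i\}$ is a map from a free $H_i$-manifold of dimension $n$ into the target, and we push it off $x_i$ by transversality before extending radially through an $H_i$-equivariant chart at $x_i$. A parametrized version of the same construction, with $M$ replaced by $M\times I$ and isovariance maintained on $M\times\bdry I$, proves~(ii). The principal technical obstacle is working in the topological category: we invoke topological equivariant transversality (valid in the range $n\geqslant 4$) together with the $H_i$-equivariant chart structures at singular points supplied by the $E_\fin\G$-manifold hypothesis.
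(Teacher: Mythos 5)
Your treatment of $\phi$ via the universal property of $E_\fin\G$ matches the paper (Corollary~\ref{cor:phi_bijects}). The issue is with $\psi$, where you take a different route and where there is a genuine gap.

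The paper proves Proposition~\ref{prop:psi} by passing to the compact quotients $M/\G_0$ and $X/\G_0$ for a finite-index torsion-free normal $\G_0\triangleleft\G$, applying the isovariance improvement theorem for finite group actions on compact pseudo-free manifolds (Theorem~\ref{thm:CDK_AppA}, which is \cite[Theorem~A.3]{CDK}), and then lifting the resulting $G$-isovariant homotopy back to a $\G$-isovariant one via covering space theory. You instead try to build isovariance directly with a local push-off near singular points, and here you invoke (a) $H_i$-equivariant ``charts'' around the singular points, through which you extend radially, and (b) a form of topological equivariant transversality. Neither is available under the hypotheses of the theorem. Nothing in Hypotheses~(AB) guarantees that the action is locally linear or even locally conelike at the singular points: the paper establishes the conelike structure only later, in Lemma~\ref{lem:mwb}, and only under the stronger Hypotheses~(ABC\tp) together with $\dim X\geqslant 5$ and the $K$-theory consequence in Corollary~\ref{cor:sumWh}. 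Theorem~\ref{thm:equi-iso}, by contrast, is stated for $n\geqslant 4$ and only Hypotheses~(AB), and its proof cannot presuppose such local structure. Similarly, topological equivariant transversality is not a tool you can invoke off the shelf in this generality; in the non-locally-linear topological category (and especially in dimension $4$, which the theorem explicitly includes) there is no such theorem to cite. The CDK appendix result you are implicitly re-proving avoids these issues precisely because it is formulated and proved in the compact finite-group setting, and the reduction to that setting is the key step your proposal is missing. In short: the local transversality-plus-charts argument is not justified by the hypotheses, and the actual content of the proof is the reduction to compact $G$-manifolds followed by a quoted improvement theorem, not a hands-on general-position argument at the singular points.
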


\begin{cor}\label{cor:2torsionfree}
Suppose $\Gamma$ satisfy Hypotheses~(ABC\tp) with the dimension of $X$ at least five.
Assume $\G$ has no elements of order two.
Then any contractible manifold $M$ equipped with an effective cocompact proper $\G$-action has the $\G$-homotopy type of $X$, and  any $\Gamma$-homotopy equivalence $f: M \to X$ is $\Gamma$-homotopic to a $\G$-homeomorphism.
\end{cor}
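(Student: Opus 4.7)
The plan is to deduce both assertions directly from Theorem~\ref{thm:main} and Theorem~\ref{thm:equi-iso}, after observing that the hypothesis on $2$-torsion collapses the indexing set of the UNil-decomposition.

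First I would verify that $(\midd)(\G) = \emptyset$. The infinite dihedral group $D_\infty \cong \Z/2 * \Z/2$ is generated by elements of order two, so any subgroup of $\G$ isomorphic to $D_\infty$ contains an involution. Since by hypothesis $\G$ has no elements of order two, it contains no infinite dihedral subgroup at all, and in particular no maximal one, so the indexing set of the direct sum in Theorem~\ref{thm:main} is empty.

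Second, I would apply Theorem~\ref{thm:main}, noting that the virtual cohomological dimension of $\G$ equals $n = \dim X \geqslant 5$: by Hypothesis~(A) the torsion-free finite-index subgroup $\G_0$ acts freely, properly, and cocompactly on the contractible manifold $X$, so $X/\G_0$ is a closed aspherical $n$-manifold of cohomological dimension $n$. With an empty indexing set, the bijection $\bdry$ has the trivial group as source, so $\mcS(\G) = \{[X,\G]\}$ is a singleton. The first assertion is then immediate: any contractible manifold $M$ equipped with an effective cocompact proper $\G$-action represents an element of $\mcS(\G)$, hence is $\G$-homeomorphic to $X$, and in particular $\G$-homotopy equivalent to $X$.

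Third, for the homotopy-to-homeomorphism refinement I would invoke Theorem~\ref{thm:equi-iso}. A $\G$-homotopy equivalence $f : M \to X$ determines a class $[M,f,\G] \in \mcS(X,\G)$, and the forgetful map $\phi : \mcS(X,\G) \to \mcS(\G)$ is a bijection. Since the target is a singleton, so is the source, and therefore $[M,f,\G] = [X,\id_X,\G]$ in $\mcS(X,\G)$. Unwinding the definition of equivalence of structures yields a $\G$-homeomorphism $h : M \to X$ together with a $\G$-homotopy from $\id_X \circ h = h$ to $f$, which is exactly the conclusion sought. The main theorems provide all the substantive content, so no real obstacle arises; the only step requiring any care is the final unpacking of the equivalence relation in $\mcS(X,\G)$, which automatically produces a homeomorphism $\G$-homotopic to $f$ rather than some unrelated one.
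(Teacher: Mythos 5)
Your proof is correct and follows exactly the paper's approach (the paper's own proof is the one-liner ``This follows from Theorems~\ref{thm:main} and~\ref{thm:equi-iso}, since $(\midd)(\Gamma)$ is empty''); you have simply spelled out the details—the emptiness of $(\midd)(\G)$, the verification that $\mathrm{vcd}(\G) = n \geqslant 5$, and the unwinding of the equivalence relation on $\mcS(X,\G)$—that the authors leave implicit.
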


\begin{proof}
This follows from Theorems~\ref{thm:main} and~\ref{thm:equi-iso},  since $(\midd)(\Gamma)$ is empty.
\end{proof}

When $\G$ is a torsion-free group, Corollary~\ref{cor:2torsionfree} follows from \cite[Theorem~B]{BL_CAT0}.

It is interesting to note that our theorems have no \emph{simple} homotopy equivalence requirement; equivariant homotopy equivalence up to equivariant
homeomorphism is enough. The reason is the isomorphism of the assembly map in the next result.

\begin{prop}[cf.~Theorem~\ref{thm:sumWh}]\label{prop:sumWh}
Suppose $\G$ satisfies Hypothesis~(B). Then
\[
H^\G_*(E_\vc\G, E_\fin\G; \ul{\bK}_\Z) ~=~ 0.
\]
\end{prop}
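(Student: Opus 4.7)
My plan is to decompose the relative equivariant homology into pieces indexed by conjugacy classes of maximal infinite virtually cyclic subgroups via the L\"uck--Weiermann pushout, and then to show that each piece vanishes by identifying it with a Nil group known to be trivial.

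First I would exploit Hypothesis~(B) to classify the infinite virtually cyclic subgroups of $\Gamma$. Any infinite virtually cyclic group $V$ has a unique maximal finite normal subgroup $F_V$, with quotient isomorphic to $\Z$ or to $D_\infty$. If $V \leqslant \Gamma$ has $F_V \neq 1$, then $V \leqslant N_\Gamma(F_V)$, which is finite by~(B), contradicting $|V| = \infty$. Hence every infinite virtually cyclic subgroup of $\Gamma$ is isomorphic to $\Z$ or to $D_\infty$. Next I would invoke the L\"uck--Weiermann decomposition for the pair of families $\fin \subseteq \vc$, using the equivalence relation $V_1 \sim V_2 \Longleftrightarrow |V_1 \cap V_2| = \infty$ on $\vc \setminus \fin$. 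Combined with excision applied to the associated cellular pushout, this yields a direct sum decomposition
\[
H^{\Gamma}_{*}(E_{\vc}\Gamma,\, E_{\fin}\Gamma;\, \ul{\bK}_\Z) ~\cong~ \bigoplus_{[V]} H^{N_\Gamma[V]}_{*}\bigl(E_{\vc}N_\Gamma[V],\, E_{\fin}N_\Gamma[V];\, \ul{\bK}_\Z\bigr),
\]
indexed by $\Gamma$-conjugacy classes of maximal equivalence classes---this is the content of the stronger Theorem~\ref{thm:sumWh}, and each representative $V$ is $\Z$ or $D_\infty$ by the preceding classification.

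The remaining task is to show each summand vanishes. The summand for $V \cong \Z$ reduces, via Bass--Heller--Swan applied to $N_\Gamma[V]$, to $NK_*(\Z) \oplus NK_*(\Z)$, which is zero because $\Z$ is a regular ring. The summand for $V \cong D_\infty$ is controlled by Waldhausen's K-theoretic Nil $\widetilde{\mathrm{Nil}}_{*}(\Z;\Z,\Z)$ for the amalgamated free product $\Z/2 \ast \Z/2$ with trivial bimodule coefficients, and this is known to vanish. The main obstacle is handling the $D_\infty$ summand: in contrast with its L-theoretic analog $\UNil_{*}(\Z;\Z,\Z)$---which is highly nontrivial and drives the entire paper---the corresponding K-theoretic Waldhausen Nil is trivial, and citing (or recalling the proof of) this vanishing cleanly is the crux. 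This asymmetry between $K$ and $L$ is precisely what permits the main theorems to dispense with any simple homotopy hypothesis.
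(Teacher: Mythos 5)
Your proposal captures the two essential ingredients --- Hypothesis~(B) forces every infinite virtually cyclic subgroup to be $C_\infty$ or $D_\infty$ (this is exactly Lemma~\ref{lem:surprise}), and the associated $K$-theoretic Nil groups over $\Z$ vanish because $\Z$ is regular. However, the route you take through the L\"uck--Weiermann pushout introduces both an imprecision and some unnecessary overhead, and it misattributes the decomposition: Theorem~\ref{thm:sumWh} is \emph{not} the L\"uck--Weiermann splitting of $H^\G_*(E_\vc\G, E_\fin\G)$. It is a splitting of the Whitehead groups $\Wh^\Z_q(\G)=H^\G_q(E_\all\G,E_1\G;\ul{\bK}_\Z)$ indexed by maximal \emph{finite} subgroups, it additionally requires $NM_{1\subset\fin}$ and the Farrell--Jones Conjecture, and logically it is a \emph{consequence} of Proposition~\ref{prop:sumWh} rather than a refinement of it.

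Two more specific concerns. First, the L\"uck--Weiermann summand attached to a commensurability class $[V]$ is $H^{N_\G[V]}_*\bigl(E_{\mcF[V]}N_\G[V],\, E_{\fin}N_\G[V]\bigr)$, where $\mcF[V]$ is the family of subgroups of $N_\G[V]$ that are finite or commensurable to $V$; this is not the same as $E_\vc N_\G[V]$, and you have not analyzed $N_\G[V]$ at all. To conclude that the summand vanishes you would still need to run a transitivity argument \emph{inside} $N_\G[V]$, at which point nothing was gained by invoking L\"uck--Weiermann. Second, and more to the point, the paper's own treatment of the $L$-theoretic analogue (the proof of Lemma~\ref{lem:change_of_family}(1)) uses the Farrell--Jones Transitivity Principle directly, which sidesteps all of the $N_\G[V]$ bookkeeping: by Lemma~\ref{lem:surprise} every $V\in\vc(\G)-\fin(\G)$ is $C_\infty$ or $D_\infty$, so it suffices to verify the relative assembly map $H^V_*(E_\fin V;\ul{\bK}_\Z)\to K_*(\Z V)$ is an isomorphism for $V=C_\infty$ (Bass--Heller--Swan, $NK_*(\Z)=0$) and $V=D_\infty$ (Waldhausen's splitting, $\wt{\mathrm{Nil}}_*(\Z;\Z,\Z)=0$ since $\Z$ is regular Noetherian). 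Equivalently and even more economically, one factors through $\fbc$: the $\vc/\fbc$ term vanishes for \emph{every} group and ring by \cite{dqr,dkr} as recalled in Remark~\ref{rem:TSO}'s companion Remark after Lemma~\ref{lem:change_of_family}, and the $\fbc/\fin$ term reduces by transitivity and Lemma~\ref{lem:surprise} to $NK_*(\Z)=0$. Your computational heart of the argument is correct; the framing around it should be replaced by the Transitivity Principle, and the reference to Theorem~\ref{thm:sumWh} removed.
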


This vanishing result can be interpreted geometrically, as follows.
Recall from \cite[Section~1]{CDK} that a \emph{cocompact manifold model} for $E_\fin\G$ is a $\G$-space with the equivariant homotopy type of a $\G$-CW complex such that, for all subgroups $H$ of $\G$, the $H$-fixed set is a contractible manifold if $H$ is finite and is empty if $H$ is infinite.
A $\G$-space $W$ is \emph{locally flat} if $W^H \subset W^K$ implies $W^H$ is a locally flat submanifold of $W^K$.
For our $h$-cobordism rigidity, we do not assume that $\G$ acts locally linearly.

\begin{thm}\label{thm:hcobordism_rigidity}
Suppose $\G$ satisfies Hypotheses~(AB) and the Farrell--Jones Conjecture in  $K$-theory.
Assume $M$ and $M'$ are cocompact manifold models for $E_\fin \G$ of dimension at least six.
If there exists a locally flat isovariant $h$-cobordism, $W$,  between $M$ and $M'$, then $W$ is a product cobordism and $M$ and $M'$ are equivariantly homeomorphic.
\end{thm}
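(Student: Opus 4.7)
The strategy is to reduce the rigidity statement to the vanishing of an equivariant Whitehead torsion via a stratified $s$-cobordism theorem, and then force that vanishing using Proposition~\ref{prop:sumWh} together with the Farrell--Jones Conjecture in $K$-theory.

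First I would record the structure of $W$. By Proposition~\ref{prop:onepoint} and Hypothesis~(B), the sets $M_{sing}$ and $M'_{sing}$ are discrete, and by Corollary~\ref{cor:phi_bijects} both $M$ and $M'$ (and hence $W$, with boundary) are cocompact $\Gamma$-CW models for $E_\fin\Gamma$. Local flatness together with isovariance forces the singular stratum $W_{sing}$ to be a disjoint union of $\Gamma$-orbits of locally flat arcs joining corresponding singular points of $M$ and $M'$, each with finite stabilizer $H$ satisfying $N_\Gamma(H)=H$. Thus $W$ is a two-stratum stratified $h$-cobordism with discrete singular stratum. The locally flat isovariant $s$-cobordism theorem (Steinberger--West or Weinberger, built on Quinn's stratified surgery) then says that $W$ is $\Gamma$-equivariantly homeomorphic to $M\times[0,1]$ rel $M$ if and only if an equivariant Whitehead torsion $\tau(W,M)$ vanishes in an equivariant Whitehead group $\Wh^\Gamma(M)$, identified as the $\pi_1$-piece of the cofiber of the assembly
\[
H^\Gamma(E_\fin\Gamma;\ul{\bK}_\Z^{\connective})\longrightarrow K(\Z\Gamma).
\]

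Next I would compute this Whitehead group via the Farrell--Jones package. The Farrell--Jones Conjecture in $K$-theory says that $H^\Gamma(E_\vc\Gamma;\ul{\bK}_\Z)\to K(\Z\Gamma)$ is an isomorphism, while Proposition~\ref{prop:sumWh} says $H^\Gamma_*(E_\vc\Gamma,E_\fin\Gamma;\ul{\bK}_\Z)=0$. Together they imply that the $\fin$-assembly $H^\Gamma(E_\fin\Gamma;\ul{\bK}_\Z)\to K(\Z\Gamma)$ is also an isomorphism, so $\Wh^\Gamma(M)$ collapses to the classical Whitehead group $\Wh(\Gamma)$, and $\tau(W,M)$ is identified with the ordinary Whitehead torsion of the $\Gamma$-equivariant homotopy equivalence $M\hookrightarrow W$.

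The hard part, which I expect to be the main obstacle, is to see that this ordinary Whitehead torsion actually vanishes. This is the geometric content of the remark preceding Proposition~\ref{prop:sumWh} (``no simple homotopy equivalence requirement''): the vanishing of the relative $K$-homology forces every $\Gamma$-equivariant homotopy equivalence between cocompact $E_\fin\Gamma$-models to be simple, because the Bredon Whitehead torsion of a cellular $\Gamma$-equivalence of such models is automatically zero (the cellular chain complexes compute the same equivariant $K$-homology) and, by the assembly isomorphism just established, this Bredon torsion is the image of the classical torsion. Applying this to $M\hookrightarrow W$ gives $\tau(W,M)=0$, so by the $s$-cobordism theorem $W\cong_\Gamma M\times[0,1]$; restriction to the opposite boundary yields the desired $\Gamma$-equivariant homeomorphism $M\cong_\Gamma M'$.
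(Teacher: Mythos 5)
Your plan correctly identifies the general shape of the argument — pass to a stratified $s$-cobordism picture and use the $K$-theoretic vanishing coming from Hypothesis~(B) and Farrell--Jones — but the step where you locate the obstruction is wrong, and the way you propose to kill it would not work.

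The paper does not place the obstruction in the classical Whitehead group $\Wh(\Gamma)$, nor in any absolute equivariant Whitehead group of $M$. Quinn's obstruction to a two-stratum manifold homotopically stratified $h$-cobordism being a stratified product lies in a \emph{relative} group $H_1(c\delta M_n,\delta M_n;\bP(q_n))$, which by the long exact sequence of the pair is caught between
\[
\bigoplus_{\Gamma x}\Wh_1(\Gamma_x)\xrightarrow{A_1}\Wh_1(\Gamma)\longrightarrow H_1(c\delta M_n,\delta M_n;\bP(q_n))\xrightarrow{\partial}\bigoplus_{\Gamma x}\Wh_0(\Gamma_x)\xrightarrow{A_0}\Wh_0(\Gamma),
\]
and Corollary~\ref{cor:sumWh} makes $A_1$ surjective and $A_0$ injective, so this relative group is $0$. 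That is the vanishing; it is a statement about the comparison between the isotropy $K$-theory and $\Wh_*(\Gamma)$, not a statement that $\Wh(\Gamma)$ is trivial or that ordinary torsions vanish.

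By contrast, your final paragraph asserts that the absolute Whitehead torsion $\tau(W,M)\in\Wh(\Gamma)$ is zero because ``the Bredon Whitehead torsion of a cellular $\Gamma$-equivalence of such models is automatically zero.'' This is false in general: $h$-cobordisms are precisely the devices that realize arbitrary elements of $\Wh(\Gamma)$, and nothing about $M$ and $W$ being contractible (or being $E_\fin\Gamma$-models) forces the torsion of the inclusion $M\hookrightarrow W$ to vanish. Proposition~\ref{prop:sumWh} does not say $\Wh(\Gamma)=0$; it only identifies $\Wh(\Gamma)$ with a sum of isotropy Whitehead groups, which is exactly what makes $A_1$ onto and $A_0$ injective in the exact sequence above. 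If you insist on phrasing things in terms of an absolute torsion you will not be able to close the argument; the point is that the stratified obstruction is the relative quantity, and that is what vanishes.

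Two smaller remarks. First, the paper does not need the explicit picture of $W_{sing}$ as arcs: it passes to $W/\Gamma_0$ and then $W/\Gamma$ and invokes Quinn's Corollary~1.6 and Theorem~1.8 directly on the quotient stratified $h$-cobordism. Second, once $W/\Gamma$ is shown to be a stratified product, there is still the step of lifting the product structure from $W/\Gamma$ to a $\Gamma$-equivariant product structure on $W$; the paper does this by a unique-path-lifting construction with a continuity check at $M_{sing}\times I$, which your outline omits and which is not entirely routine because $W\to W/\Gamma$ is not a covering map near the singular stratum.
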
  

Here we outline the logical structure of arguments in the rest of the paper.
In Sections~\ref{sec:onepoint} and \ref{sec:eq_to_iso} we establish the  geometric topology needed to prove Theorem~\ref{thm:equi-iso}.
In Section~\ref{sec:repulsion}, we use the end theory of high-dimensional manifolds to reduce the isovariant structure set to a classical structure set of a
certain compact pair, $(\ol{X},\bdry\ol{X})$.
Therein we use an algebraic result (Corollary~\ref{cor:sumWh}) which depends on Proposition~\ref{prop:sumWh}.
Later, we further reduce to Ranicki's algebraic structure groups.
In Section~\ref{sec:lowering}, we systematically remove all of the  $K$-theory decorations and pass to $\mcS^{-\infty}$.
In Section~\ref{sec:structures}, we calculate these structure groups using the Farrell--Jones Conjecture and the other axioms for $\G$, thus proving Theorem~\ref{thm:main}.
In Section \ref{sec:eq_h}, we deduce Theorem \ref{thm:hcobordism_rigidity} from Corollary \ref{cor:2torsionfree} and general results of Quinn.  In Section \ref{sec:examples}, we give a variety of examples of groups which satisfy the hypotheses of Theorem \ref{thm:main}, and hence groups where the question of equivariant rigidity is completely answered.

\subsection*{Acknowledgments}
The authors thank both the referee for suggesting to use hyperbolization to find examples and also Mike Davis for discussions on this topic.

\section{$E_\fin \G$-manifolds}\label{sec:onepoint}

Recall that for a discrete group $\G$, a \emph{model for $E_{\fin} \Gamma$} is a $\Gamma$-space $M$ which is $\Gamma$-homotopy equivalent to a $\Gamma$-CW complex and, for all subgroups $H$ of $\Gamma$,
\[
M^H \text{ is }
\begin{cases}
\text{contractible} & \text{if } H \text{ is finite}\\
\text{empty} & \text{otherwise}.
\end{cases}
\]
Given any $\Gamma$-CW complex $X$ with finite isotropy groups, there is an equivariant map $X \to E_{\fin}\Gamma$, unique up to equivariant homotopy.  It follows that any two models are $\Gamma$-homotopy equivalent.  Furthermore, a model $E_{\fin}\Gamma$ exists for any group $\Gamma$.  

We show that if a group $\G$ satisfies Hypotheses (AB) and acts effectively, cocompactly, and properly on a contractible manifold, then the manifold models $E_{\fin}\Gamma$.

The following lemma is not logically necessary, but nonetheless provides motivation.

\begin{lem}\label{lem:AC_Efin}
Let $\G$ be a virtually torsion-free group with an effective cocompact proper action by isometries on a contractible Riemannian manifold $X$ of nonpositive sectional curvature (that is, Hypotheses (AC) hold).
Then $X$ models $E_\fin\G$.   
\end{lem}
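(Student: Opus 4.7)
The plan is to verify the three defining properties of a model for $E_\fin\G$: (i) $X$ is $\G$-homotopy equivalent to a $\G$-CW complex, (ii) the fixed set $X^H$ is contractible for every finite $H \leq \G$, and (iii) $X^H$ is empty for every infinite $H \leq \G$. Both (ii) and (iii) will follow from standard $\CAT(0)$ geometry, while (i) is a consequence of an equivariant triangulation result.

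First I would invoke the Cartan--Hadamard theorem: since $X$ is simply connected, complete, and nonpositively curved, it carries the structure of a $\CAT(0)$ space, with $\G$ acting by isometries. Properness of the action then forces each point stabilizer $\G_x$ to be finite, so any subgroup of $\G$ that fixes a point of $X$ must itself be finite; this yields~(iii).

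For (ii), I would apply the Bruhat--Tits/Cartan fixed point theorem to each finite $H \leq \G$: the circumcenter of an $H$-orbit is fixed by $H$, so $X^H$ is nonempty. Moreover, the fixed point set of any single isometry of a $\CAT(0)$ space is closed and convex (two fixed points are joined by a unique geodesic, which is pointwise fixed by uniqueness of geodesics). Hence $X^H$, being the intersection of such sets, is itself a closed convex subset of $X$, and so is contractible by geodesic contraction to any chosen fixed point.

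For (i), the isometric $\G$-action on $X$ is smooth and proper, so Illman's equivariant triangulation theorem for smooth proper actions of discrete groups on smooth manifolds provides a $\G$-equivariant simplicial, and hence $\G$-CW, structure on $X$. Alternatively, using Hypothesis~(A), the finite-index torsion-free subgroup $\G_0 \leq \G$ acts freely (finite stabilizers in a torsion-free group are trivial) and cocompactly on $X$, so $\G_0\backslash X$ is a closed smooth manifold on which the finite group $\G/\G_0$ acts smoothly; equivariantly triangulating this quotient and lifting produces the desired $\G$-CW structure on $X$. I expect (i) to be the step that requires the most care in the write-up, since (ii) and (iii) are essentially immediate consequences of the nonpositive curvature hypothesis, whereas the $\G$-CW condition requires invoking an external triangulation theorem and has nothing to do with the Riemannian geometry.
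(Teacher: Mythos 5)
Your proof follows essentially the same route as the paper: the Cartan (Bruhat--Tits) fixed-point theorem for nonemptiness of $X^H$, geodesic convexity for contractibility, properness for emptiness when $H$ is infinite, and Illman's equivariant triangulation of the compact quotient $X/\G_0$ by the finite group $G=\G/\G_0$, lifted to a $\G$-CW structure on $X$. The one step you take for granted is the completeness of $X$, which both Cartan--Hadamard and the fixed-point theorem require; as the paper observes, it follows from compactness of $X/\G_0$ (cocompactness of the $\G$-action together with finiteness of $[\G:\G_0]$), and it is worth recording this rather than listing completeness among the hypotheses.
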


\begin{proof}
Let $\G_0$ be a finite index normal subgroup of $\G$ with quotient group $G$.  The manifold $X$ is complete since $X/\G_0$ is compact, hence complete. 
For an infinite subgroup $H$ of $\G$, the fixed set $X^H$ is empty by properness.  
For a finite subgroup $H$  of $\G$, the fixed set $X^H$ is nonempty by the Cartan Fixed Point Theorem  \cite[Theorem 13.5]
{Helgason}, which states that a compact group of isometries of a complete nonpositively curved manifold has a fixed point.   It is easy to show that the fixed set of group of isometries is geodesically convex, hence contractible if nonempty.
Finally, since the finite group $G$ acts smoothly on the closed manifold $X/\G_0$, this is a $G$-CW complex \cite{Illman_G}, and so, by lifting cells, $X$ is a $\G$-CW complex.  
\end{proof}

Now it remains to show that if $\G$ satisfies Hypotheses~(AB) and if $M$ is a contractible $\G$-manifold, then the fixed point set of each finite subgroup of $\G$ is a point and $M$ has the $\G$-homotopy type of a $\G$-CW complex; hence $M$ will model $E_{\fin}\Gamma$.
We start with a group-theoretic lemma.  

\begin{lem}\label{lem:ft}
Suppose a finite group $G$ acts on a set $Y$ so that for all prime-order subgroups $P$ of $G$, the fixed set $Y^P$ is a singleton. Then $Y^G$ is a
singleton.
\end{lem}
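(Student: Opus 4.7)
The plan is to establish that $Y^G$ is a singleton by strong induction on $|G|$. The upper bound $|Y^G| \le 1$ is immediate from the inclusion $Y^G \subseteq Y^P$ for any prime-order subgroup $P \le G$, so the real content is to produce one global fixed point. The base case $|G|$ prime is handed to us directly by the hypothesis, so I assume from now on that $|G| > 1$ is composite.

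The first subsidiary observation I would record is that for every nontrivial \emph{nilpotent} subgroup $H \le G$, the fixed set $Y^H$ is a singleton: indeed $Z(H) \ne 1$ contains an element $z$ of prime order, $Y^{\langle z \rangle}$ is a single point $\{y\}$ by hypothesis, and $H$ permutes this singleton because $\langle z \rangle$ is central in $H$. In particular every cyclic $\langle h \rangle$ with $h \ne e$ has a unique fixed point $y_h$, and every Sylow subgroup of $G$ has a unique fixed point. Next, if $G$ admits a nontrivial proper normal subgroup $N$, the hypothesis is inherited by $N$ (its prime-order subgroups are prime-order in $G$), so by induction $Y^N$ is a singleton; this singleton is $G$-stable because $N \trianglelefteq G$, and therefore it is a global fixed point.

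This reduces the argument to the case where $G$ is simple. Since a simple abelian group has prime order, I may assume $G$ is simple nonabelian, and in particular not a $p$-group. Choose a prime $p$ dividing $|G|$ and a Sylow $p$-subgroup $S \lneq G$; by the nilpotent observation, $Y^S = \{y\}$ is a singleton. Set $H := G_y \supseteq S$. If $H = G$ we are done, so assume $H < G$ and aim for a contradiction.

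The main obstacle is precisely this final case, which I resolve by a trivial-intersection analysis plus Frobenius's theorem. I claim $H \cap gHg^{-1} = \{e\}$ for every $g \notin H$: any nontrivial $h$ in the intersection would fix both $y$ (because $h \in H$) and $gy$ (because $g^{-1}hg \in H$ fixes $y$), but the uniqueness of the fixed point of the cyclic group $\langle h \rangle$ then forces $y = gy$, contradicting $g \notin H$. Hence $H$ is a trivial-intersection subgroup, so $G$ is a Frobenius group with complement $H$; Frobenius's theorem then produces a nontrivial proper normal Frobenius kernel in $G$, which contradicts the simplicity of $G$. Therefore $H = G$ and $y$ is the unique element of $Y^G$.
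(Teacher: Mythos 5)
Your proof is correct, but it takes a genuinely different route from the paper's. Both proofs share the same opening moves: you and the authors both observe that a normal subgroup $N\trianglelefteq H$ with $Y^N$ a singleton forces $Y^H$ to equal that same singleton (the paper calls this the ``normal extension property''), and both use this to handle nilpotent subgroups and to peel off proper normal subgroups inductively. Where the arguments diverge is in the terminal case. The paper splits by parity: if $|G|$ is even it shows all involutions fix the same point, so the normal subgroup generated by involutions lies in $\mcC$ and the normal extension property finishes; if $|G|$ is odd it invokes the Feit--Thompson theorem to produce a proper normal subgroup. You instead reduce uniformly to the case $G$ simple nonabelian, take the stabilizer $H=G_y$ of the unique fixed point of a Sylow subgroup, show $H$ is a malnormal proper subgroup by your fixed-point uniqueness argument, and then apply Frobenius's theorem to get a nontrivial proper normal kernel, contradicting simplicity. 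Your version buys uniformity (no even/odd case split) and replaces the very heavy Feit--Thompson theorem with the much lighter Frobenius theorem --- still a character-theoretic input, but one with a short classical proof. It is worth noting that the paper itself remarks, after its proof, that Feit--Thompson can be avoided in its intended application by exploiting that $Y$ is a contractible manifold (periodic cohomology forces odd-order isotropy to be metacyclic); your argument has the additional virtue of avoiding Feit--Thompson at the level of abstract $G$-sets, with no topology required.
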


\begin{proof}
Let $\mcC$ be the collection of nontrivial subgroups of $G$ whose fixed set is a singleton.
Note that $\mcC$ satisfies the \emph{normal extension property}: if $N \triangleleft H$ with $N \in \mcC$, then $H \in \mcC$.
Indeed $Y^H \subset Y^N$ since $N \subset H$.
On the other hand, since $N\in \mcC$, we see the singleton $Y^N$ is an $H$-invariant  by normality, so $Y^N \subset Y^H$.
So $Y^H$ is a point and $H\in \mcC$.
In particular $\mcC$ contains all cyclic subgroups of $G$.

We next claim that, if $a$ and $b$ are elements of order 2 in $G$, then $X^{\langle a\rangle}=X^{\langle b\rangle}=X^{\langle a,b \rangle}$.
For the cyclic group $\langle ab\rangle$ is in $\mcC$, and $\langle ab\rangle\triangleleft \langle a, b \rangle$. So $\langle a, b \rangle\in \mcC$. Therefore $X^{\langle a\rangle}$ and $X^{\langle b\rangle}$ are the same point $X^{\langle a,b \rangle}$.
It follows that this point is the fixed set of the normal subgroup $N$ generated by all elements of order 2 in $G$.
This subgroup is nontrivial if $G$ has even order. So by the normal extension property, $Y^G$ is a singleton if $G$ has even order.

Finally suppose $G$ has odd order. It that order is a prime, then $G$ is in $\mcC$. If not then by the Feit--Thompson Theorem \cite{FeitThompson}, there is a
nontrivial proper normal subgroup and we proceed inductively to conclude that
$G\in \mcC$.
\end{proof}

Recall that a group action is \emph{pseudo-free} if the singular set is discrete.
Our next result shows that the finite normalizer condition is equivalent to pseudo-free. 

\begin{prop}\label{prop:onepoint}
Suppose $\G$ satisfies Hypothesis~(A).
Assume $M$ is a contractible manifold equipped with an effective cocompact proper $\G$-action. The action $\G \curvearrowright M$ is pseudo-free if and only if Hypothesis~(B) holds.
Furthermore, if either of these equivalent conditions holds, then for a subgroup $H$ of $\G$, the fixed point set $M^H$ is empty if $H$ is infinite  and a point if $H$ is nontrivial and finite.
\end{prop}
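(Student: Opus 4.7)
The plan is to isolate, in both directions of the biconditional, one pivotal statement: for each nontrivial finite subgroup $H\leqslant\Gamma$, the fixed set $M^H$ is a single point. Applying Lemma~\ref{lem:ft} to the $H$-action on $M$ reduces this in turn to the case of prime order $H=P$. Emptiness of $M^H$ for infinite $H$ is immediate from properness (stabilizers are finite), so the singleton claim together with discreteness of $M_{sing}$ will complete the proposition.

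For pseudo-free~$\Longrightarrow$~(B), Smith theory makes $M^P$ a $\Z/p$-acyclic (hence nonempty and connected) closed subspace of $M$. Pseudo-freeness places $M^P$ inside the discrete set $M_{sing}$, so this connected discrete nonempty set is a singleton $\{x_P\}$. Lemma~\ref{lem:ft} upgrades this to $M^H=\{x_H\}$ for every nontrivial finite $H$, and $N_\Gamma(H)$ stabilizes $\{x_H\}$ and so embeds in the (finite, by properness) point-stabilizer $\Gamma_{x_H}$, giving Hypothesis~(B).

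For the converse (B)~$\Longrightarrow$~pseudo-free, the core task is to show that $M^P$ is a singleton using only the finiteness of $N_\Gamma(P)$. Smith theory supplies that $M^P$ is $\Z/p$-acyclic and a boundaryless generalized $\Z/p$-homology manifold. I would first establish compactness: the orbit map $\pi\colon M\to M/\Gamma$ is closed by properness, and the fiber of $\pi|_{M^P}$ over $[x]\in M/\Gamma$ is indexed modulo right $\Gamma_x$-multiplication by the set $T=\{g\in\Gamma : g^{-1}Pg\subseteq\Gamma_x\}$, which decomposes as a finite union of left $N_\Gamma(P)$-cosets (one for each subgroup of the finite group $\Gamma_x$ that is conjugate to $P$ in $\Gamma$). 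Finiteness of $N_\Gamma(P)$ via Hypothesis~(B) then forces these fibers to be finite, and a closed map with finite fibers into a compact Hausdorff space has compact domain, so $M^P$ is compact. I would next rule out positive dimension by combining Smith's Euler-characteristic congruence $\chi(M^P)\equiv\chi(M)=1\pmod{p}$ (which forces $\dim M^P$ to be even) with Poincar\'e duality for the compact boundaryless generalized $\Z/p$-manifold $M^P$: a nonzero top $\Z/p$-cohomology class, extracted via a $\Z/p$-orienting cover when needed, contradicts $\Z/p$-acyclicity. Hence $\dim M^P=0$, and connectedness yields $M^P=\{x\}$. Lemma~\ref{lem:ft} then extends the singleton conclusion to all nontrivial finite $H$, and pseudo-freeness follows: any accumulation point $x\in M_{sing}$ would produce a sequence $y_n\to x$ of distinct singular points with $\Gamma_{y_n}\leqslant\Gamma_x$ (by continuity and properness), so some nontrivial $K\leqslant\Gamma_x$ would contain infinitely many $\Gamma_{y_n}$, placing those $y_n$ in $M^K=\{x\}$, a contradiction.

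The main obstacle I anticipate is the dimension argument in the converse direction: Smith theory only structures $M^P$ as a generalized $\Z/p$-homology manifold (rather than a topological manifold), and $\Z/p$-orientability in positive even dimension is not automatic, so the Poincar\'e-duality step must be tuned carefully---perhaps via the orienting double cover, or by invoking Hypothesis~(A) through the closed aspherical quotient $M/\Gamma_0$ to pin down the global orientation structure.
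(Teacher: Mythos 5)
Your proposal is correct, and the broad skeleton (reduce to prime-order subgroups, invoke Lemma~\ref{lem:ft}, treat infinite subgroups by properness) matches the paper. For the direction pseudo-free~$\Longrightarrow$~(B), you and the paper argue identically, except that you quote Smith theory directly where the paper cites \cite[Lemma 2.2(2)]{CDK}. For the converse, however, the paper takes a shortcut you do not: it passes to the torsion-free normal subgroup $\G_0$ from Hypothesis~(A), observes that $M^P\to p(M^P)\subset M/\G_0$ is a regular $\G_0^P$-cover with $\G_0^P\subset\G_0\cap N_\G(P)=1$ (so a homeomorphism), concludes $M^P$ is compact because $p(M^P)$ is closed in the compact $M/\G_0$, and then invokes \cite[Lemma 2.2(3)]{CDK} wholesale to conclude that a compact fixed set of a $p$-group action on a contractible manifold is a point. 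Your alternative -- counting orbits in the fibers of $\pi|_{M^P}\colon M^P\to M/\G$ via $N_\G(P)$-cosets, and then ruling out positive dimension by Poincar\'e duality for $\Z/p$-homology manifolds -- is sound and self-contained, at the price of having to address $\Z/p$-orientability of the generalized manifold $M^P$, a subtlety you correctly flag (this is precisely what \cite[Lemma 2.2(3)]{CDK} is packaging for the paper, and the Euler-characteristic step in your sketch is actually redundant once duality is in hand). One thing your proposal includes that the paper implicitly omits is the final step showing that singleton fixed sets for all nontrivial finite subgroups actually forces $M_{sing}$ to be discrete; your accumulation-point argument via a common isotropy element is exactly the missing bridge.
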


\begin{proof}
First suppose $\G \curvearrowright M$ is pseudo-free.
Fix a nontrivial finite subgroup $H$ of $\G$.
Let $P$ be a prime-order subgroup of $H$.
By \cite[Lemma 2.2(2)]{CDK}, $M^P$ is nonempty and connected.
Then, since $M^P$ is a subspace of the discrete space $M_{sing}$, $M^P$ must be a single point.
Thus, by Lemma~\ref{lem:ft}, $M^H$ is a single point.
Therefore, since the restriction $N_\G(H) \curvearrowright  M^H$ is proper, $N_\G(H)$ must be finite.

Now suppose, in addition to the existence of a torsion-free subgroup $\G_0 \triangleleft \G$ of finite index, that $N_\G(H)$ is a finite group for each nontrivial finite subgroup $H$ of $\G$.
Let $P$ be a prime-order subgroup of $\G$.   
 The covering map $p : M \to M/\G_0$ is $P$-equivariant. By \cite[Lemma 2.3(3)]{CDK}, $M^P \to p(M^P)$ is a regular
$\G_0^P$-cover where $\G_0^P = \{ \gamma \in \G_0 \mid \gamma h \gamma^{-1} = h\in \G_0 \text{ for all }h \in P\}$. But $\G_0^P \subset \G_0 \cap N_\G(P) = 1$,
so $M^P \to p(M^P)$ is a homeomorphism. By \cite[Lemma 2.3(6)]{CDK}, $p(M^P)$ is compact, and hence so is $M^P$. By \cite[Lemma 2.2(3)]{CDK}, $M^P$ is a
point since it is both compact and the fixed set of a $p$-group action on a contractible manifold. Thus, by Lemma~\ref{lem:ft}, $M^H$ is a point for each
nontrivial finite subgroup $H$ of $\G$.

Note that since the action is proper, $M^H$ is empty for infinite subgroups $H$ and that we showed above that $M^H$ is a point when $H$ is nontrivial and finite.   
\end{proof}
 
\begin{rem}
The resort to the Feit--Thompson theorem in Lemma~\ref{lem:ft} is striking. But for our purposes   (in the proof of Proposition~\ref{prop:onepoint}) it is not actually necessary.
For if $G$ is an odd order subgroup of $\G$, the argument in Lemma~\ref{lem:ft} shows that each nontrivial Sylow $p$-subgroup $P$ of $G$ fixes exactly one point
(use the normal extension property and the nilpotency of $P$). Since $P$ acts freely on the homology sphere $X-X^P$, it follows that $P$ has periodic
cohomology and is therefore cyclic, since $p$ is odd. As is well known, this implies that $G$ is metacyclic. So again by the normal extension property,
since $G$ has a normal cyclic subgroup, we see $G\in \mcC$ and $X^G$ is a singleton.
\end{rem}

Here is a general tameness result pertaining to all of the actions that we consider.

\begin{prop}\label{prop:GCWstructure}
Let $\G$ be a discrete group.
Any topological manifold $M$ equipped with a cocompact pseudo-free $\G$-action has the $\G$-homotopy type of a $\G$-CW complex.
\end{prop}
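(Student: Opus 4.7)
Since the $\Gamma$-action on $M$ is proper, cocompact, and pseudo-free, the singular set consists of only finitely many $\Gamma$-orbits. Fix orbit representatives $x_1,\dots,x_k\in M_{sing}$ with finite isotropy groups $G_i:=\Gamma_{x_i}$. The plan is to build an equivariant decomposition of $M$ into a cocompact free $\Gamma$-submanifold and small invariant neighborhoods of the singular orbits, each piece of which visibly has $\Gamma$-CW homotopy type, and then glue the pieces together by an equivariant pushout.

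First I would apply the slice theorem for proper actions of discrete groups to obtain, for each $i$, a $G_i$-invariant open neighborhood $V_i$ of $x_i$ whose $\Gamma$-translates are pairwise disjoint, so that $\Gamma\cdot V_i\cong\Gamma\times_{G_i}V_i$; after shrinking I may assume $V_i\cap M_{sing}=\{x_i\}$, hence that $G_i$ acts freely on $V_i\setminus\{x_i\}$. I would then arrange that $\{x_i\}\hookrightarrow V_i$ is a $G_i$-equivariant homotopy equivalence. One route is to choose a $\Gamma$-invariant proper metric on $M$ (obtained by averaging over $G_i$-orbits locally and gluing via a $\Gamma$-invariant partition of unity on $M_{free}$) and take $V_i$ to be a small metric ball, radially deformed to $x_i$. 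A more intrinsic route is to check that $M$ is a $\Gamma$-ANR (locally at free points, since $M_{free}/\Gamma$ is a topological manifold; locally at $x_i$, since $M$ is a topological manifold and $x_i$ is isolated in $M_{sing}$) and invoke that an isolated fixed point in a $G_i$-ANR has a basis of $G_i$-equivariantly contractible invariant neighborhoods.

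Next I shrink each $V_i$ to $V_i'$ with $\overline{V_i'}\subset V_i$ and bi-collared boundary, and put $M_0:=M\setminus\bigsqcup_i\Gamma\cdot V_i'$. Then $\Gamma$ acts freely and cocompactly on $M_0$, the quotient $M_0/\Gamma$ is a compact topological manifold with boundary, and by West's theorem any compact ANR has the homotopy type of a finite CW complex, so $M_0/\Gamma$ has finite CW homotopy type. Lifting a CW structure along the regular covering $M_0\to M_0/\Gamma$ endows $M_0$ with the $\Gamma$-homotopy type of a free $\Gamma$-CW complex. Since each $\Gamma\cdot V_i\cong\Gamma\times_{G_i}V_i$ $\Gamma$-equivariantly deformation retracts to the zero-dimensional orbit $\Gamma/G_i$, the equivariant pushout of $M_0$ with the orbits $\Gamma/G_i$ along the collars $\Gamma\times_{G_i}\partial V_i'$ is a $\Gamma$-CW complex $\Gamma$-homotopy equivalent to $M$.

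The main obstacle is the $G_i$-equivariant contraction of the neighborhoods $V_i$ in the purely topological setting: unlike the smooth case one cannot average a Riemannian metric and use geodesic balls, and general finite group actions on topological manifolds need not linearize at fixed points. The pseudo-free hypothesis is precisely what makes this local step tractable, since it reduces the question at each $x_i$ to that of a finite group acting on a topological manifold with a single isolated fixed point; in that situation the required equivariantly contractible neighborhood basis follows from the standard $\Gamma$-ANR theory of proper cocompact actions.
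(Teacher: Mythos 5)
Your approach is genuinely different from the paper's: you aim for a direct equivariant decomposition of $M$ into a free cocompact part $M_0$ and small $G_i$-invariant disc-like neighborhoods of the singular points, then glue via an equivariant pushout. The paper instead runs Quinn--Siebenmann end theory on $M_{free}/\Gamma$, crosses with a torus $T^5$, collapses the torus directions over $M_{sing}$, and finally recovers the $\Gamma$-homotopy type of $M$ from $M \times T^5$ by a mapping-torus / infinite-cyclic-cover trick.

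The difficulty is that your plan requires, simultaneously, (a) a $G_i$-invariant compact neighborhood $\overline{V_i'}$ of $x_i$ whose frontier is a bicollared codimension-one submanifold, and (b) that $\{x_i\}\hookrightarrow \overline{V_i'}$ be a $G_i$-homotopy equivalence. Condition (b) does not follow from the slice theorem or from ``standard $\Gamma$-ANR theory'' as you assert. A $G_i$-ANR is equivariantly \emph{locally} contractible --- every invariant neighborhood of $x_i$ contains a smaller one whose inclusion is $G_i$-nullhomotopic --- but this does not produce a basis of $G_i$-\emph{contractible} invariant neighborhoods, and certainly not ones which are also manifolds with bicollared boundary. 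In fact, obtaining (a) and (b) together is essentially the statement that the end of $M_{free}/\Gamma$ at the image of $x_i$ has a collar: if $\overline{V_i'}$ is a compact $G_i$-contractible manifold-with-boundary containing $x_i$ in its interior, then (cf.\ Lemma~\ref{lem:sphere}) it is a disc, the frontier is a sphere, and the quotient $(\overline{V_i'}\smallsetminus\{x_i\})/G_i$ is a one-sided collar of that end. By Siebenmann's theorem this requires the finiteness obstruction in $\widetilde K_0(\mathbb{Z}G_i)$ to vanish. That vanishing is \emph{not} a hypothesis of the Proposition (which assumes only a discrete group and a cocompact pseudo-free action) and is established only later in the paper (Lemma~\ref{lem:mwb}, via Corollary~\ref{cor:sumWh}, under the Farrell--Jones Conjecture in $K$-theory). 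Using it here would be circular.

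This is precisely the point of the torus trick in the paper's proof: crossing with $T^5$ kills the finiteness obstruction of the ends (a Bass--Heller--Swan phenomenon), so that $M_{free}/\Gamma \times T^5$ does admit a Siebenmann completion $W$ with no $K$-theoretic hypothesis. The rest of the argument (the identification space $M_1$, Kirby--Siebenmann to give $W$ a CW model, and the mapping-torus argument with $\overline{\mathrm{Torus}(i\circ r)}\simeq M\times\mathbb{R}$) is designed exactly to extract the $\Gamma$-homotopy type of $M$ from that of $M\times T^5$ without ever needing invariant collared neighborhoods of the singular points in $M$ itself. To repair your argument you would either need to supply those neighborhoods by some independent means, or adopt a stabilization device of this kind; as written, the local step is a genuine gap.
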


\begin{proof}
Since $M_{sing}$ is a discrete subset of the topological manifold $M$, it is forward-tame (tame in the sense of Quinn \cite{Quinn_HSS}).
Then, by \cite[Propositions 2.6 and 3.6]{Quinn_HSS}, $M_{sing}/\G$ is forward-tame in $M/\G$. 
So, by \cite[Proposition 2.14]{Quinn_HSS}, the complement $M_{free}/\G = M/\G - M_{sing}/\G$ is reverse-tame (tame in the sense of Siebenmann \cite{Siebenmann_thesis}).
 
Then by Siebenmann \cite{Siebenmann_thesis}, if $\dim M \geqslant 5$, the ends of $ M_{free}/\G \times S^1$ have collar neighborhoods.   We will go ahead and cross with the 5-torus, and thereby guarantee both that the manifold is high-dimensional  and the ends are collared.  
Hence there is a compact $\partial$-manifold $W$ such that
\[
M_{free}/\G \times T^5 ~=~ (M_{free} \times T^5)/\G ~=~ W \cup_{\partial W} (\partial W \times [0,1)).
\]

Write $\wt W$ and $ \wt{ \partial W}$ for the pullback $\G \times \Z^5$-covers of $W$ and $\partial W$ of the cover $M_{free} \times \R^5 \to M_{free}/\G \times T^5$.
Observe $\G$ acts properly on the set $\Pi_0 \wt{\partial W}$ of connected components of $\wt{\partial W}$.
Then, since $\G \curvearrowright M$ is pseudo-free, we can define a $\G$-map
\begin{equation}\label{e_map}
e: \wt{\partial W} \longrightarrow M_{sing} ~;~ s \longmapsto M^{\G_{C(s)}},
\end{equation}
where $C(s) \in \Pi_0 \wt{\partial W}$ is the connected component containing a point $s$.   Consider the equivalence relation on $M \times T^5$ given by $(s,z) \sim (s,z')$ if $s \in M_{sing}$.
This is preserved by the $\G$-action.
Write $M_1 = (M \times T^5)/\! \sim$ for the quotient.
As $\G$-spaces,
\[
M_1 ~=~ \wt{W} \cup_{ \wt{\partial W}} \Cyl(e).
\]
The mapping cylinder $\Cyl(e) := \wt{\partial W} \times [0,1] \cup_{e \times 1} M_{sing}$ has the quotient topology.

Since $W$ is a topological $\bdry$-manifold, by Kirby--Siebenmann \cite[p123, Theorem III:4.1.3]{KS}, there is a proper homotopy equivalence $(f,\bdry f): (W,\bdry W) \to (L,K)$ for some CW pair $(L,K)$.
Also $M_{sing}$ is a 0-dimensional $\G$-CW complex.
Then $M_1$ has the $\G$-homotopy type of a $\G$-CW complex, via the $\G$-homotopy equivalence
\[
\wt{f} \cup \Cyl\left(\wt{\bdry f}\right) ~:~ M_1 ~\longrightarrow~ \wt{L} \cup_{\wt{K}} \Cyl\left(e\circ \wt{(\bdry f)^{-1}}\right). 
\]
Here $\wt{(\bdry f)^{-1}}$ denotes the $\G$-cover of a homotopy-inverse for $\bdry f$.

Finally, choose a point $* \in T^5$ and consider the $\G$-inclusion and corresponding $\G$-retraction
\[
i: M \longrightarrow M_1 ~; y \longmapsto [y,*]
\quad\text{and}\quad
r: M_1 \longrightarrow M ~;~ [y,z] \longmapsto y.
\]
The mapping torus $\Torus(i\circ r)$ has the $\G$-homotopy type of a $\G$-CW complex.
By cyclic permutation of composition factors, $M \simeq M \times \R = \overline{\Torus(r\circ i)} \simeq \overline{\Torus(i \circ r)}$ as $\G$-spaces, where ${}^-$ means the infinite cyclic cover.
This completes the proof.
\end{proof}

\begin{cor}\label{cor:phi_bijects}
Suppose $\G$ satisfies Hypotheses~(AB).
Any contractible manifold $M$ equipped with an effective cocompact proper $\G$-action is a model for $E_\fin\G$.
Hence the forgetful map $\phi: \mcS(X,\G) \to \mcS(\G)$ of Theorem~\ref{thm:equi-iso} is bijective. 
\end{cor}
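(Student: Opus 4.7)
The plan is to verify the two defining properties of an $E_\fin\G$-model for $M$ and then to deduce bijectivity of $\phi$ from the universal property of such models.

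First I would check the fixed-set conditions. Proposition~\ref{prop:onepoint} already shows that under Hypotheses~(AB) the action $\G\curvearrowright M$ is pseudo-free, that $M^H$ is empty whenever $H$ is infinite, and that $M^H$ is a single point whenever $H$ is nontrivial and finite; combined with the hypothesis that $M$ itself is contractible, this gives exactly the required fixed-set behaviour on all subgroups. Next, since the action is pseudo-free, cocompact, and on a topological manifold, Proposition~\ref{prop:GCWstructure} applies directly and shows that $M$ has the $\G$-homotopy type of a $\G$-CW complex. Combining these two statements, $M$ satisfies the definition of an $E_\fin\G$-model.

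For bijectivity of $\phi$ I would exploit the fact that both $M$ and $X$ model $E_\fin\G$: by the first half of the corollary (applied in turn to $M$ and to the chosen $X \in \mcS(\G)$ from Theorem~\ref{thm:equi-iso}), together with the standard universal property that any two $\G$-maps from a $\G$-CW complex with finite isotropy into a model of $E_\fin\G$ are $\G$-homotopic. Surjectivity then amounts to this: given $[M,\G]\in\mcS(\G)$, pick any $\G$-homotopy equivalence $f:M\to X$ supplied by the universal property, and $\phi[M,f,\G]=[M,\G]$. For injectivity, suppose $[M,f,\G],[M',f',\G]\in\mcS(X,\G)$ with $\phi[M,f,\G]=\phi[M',f',\G]$, witnessed by a $\G$-homeomorphism $h:M\to M'$. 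Then $f$ and $f'\circ h$ are two $\G$-maps from $M$ into the $E_\fin\G$-model $X$, so uniqueness produces a $\G$-homotopy between them, which is exactly the equivalence relation used to define $\mcS(X,\G)$.

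I expect no genuine obstacle within the corollary itself, since the real work has been isolated into Proposition~\ref{prop:onepoint} (point fixed sets) and Proposition~\ref{prop:GCWstructure} ($\G$-CW homotopy type). The one point requiring a moment's care is that $M$ is only asserted to be $\G$-homotopy equivalent to a $\G$-CW complex rather than to be one on the nose; to invoke the universal property cleanly I would transport $f$ and $f'\circ h$ across a chosen $\G$-homotopy equivalence $M\simeq_\G M^{\cw}$, apply the standard uniqueness of maps to $E_\fin\G$ there, and pull the resulting equivariant homotopy back to $M$.
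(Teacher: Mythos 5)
Your proof is correct and follows exactly the paper's own route: Proposition~\ref{prop:onepoint} for the fixed-point behaviour, Proposition~\ref{prop:GCWstructure} for the $\G$-CW homotopy type, contractibility to conclude $M$ models $E_\fin\G$, and then the universal property of classifying spaces for the bijectivity of $\phi$. You have merely spelled out the last step, which the paper dismisses in one sentence (``follows from universal properties of classifying spaces''), and your caveat about transporting along a $\G$-homotopy equivalence to an honest $\G$-CW complex is a reasonable point of hygiene but not a deviation in approach.
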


\begin{proof}
By Proposition~\ref{prop:onepoint}, the $H$-fixed sets of $M$  are single points if $H$ is a nontrivial finite subgroup of $\G$,
otherwise the $H$-fixed sets are empty if $H$ is infinite.
Hence $M$  is a pseudo-free $\G$-manifold.
Then, by Proposition~\ref{prop:GCWstructure}, $M$  has the $\G$-homotopy type of a $\G$-CW complex.
Therefore, since $M$ is contractible, it models $E_\fin\G$.
The remainder follows from universal properties of classifying spaces.
\end{proof}

\section{From Equivariance to Isovariance} \label{sec:eq_to_iso}

We recall our earlier result about improving equivariant maps:
\begin{thm}[{\cite[Theorem~A.3]{CDK}}]\label{thm:CDK_AppA}
Let $G$ be a finite group.
Let $A^n$ and $B^n$ be compact  pseudo-free $G$-manifolds.
Assume $n\geqslant 4$.
Let $f:A\to B$ be a $G$-map such that the restriction $f|_{A_{sing}}: A_{sing}\to B_{sing}$ is bijective.
\begin{enumerate}
\item
If $f$ is $1$-connected, then $f$ is $G$-homotopic to an isovariant map.

\item
If $f= f_0$ and $f_0,f_1: A\to B$ are $G$-isovariant, and $2$-connected, and   $G$-homotopic, then $f_0$ is $G$-isovariantly homotopic to $f_1$.
\end{enumerate}
\end{thm}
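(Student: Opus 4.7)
The plan is to reduce both statements to a transversality and obstruction-theoretic argument for pushing an equivariant map off a 0-dimensional subset, after first normalizing $f$ near the singular set.

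\textbf{Normalization near singularities.} First I would observe that since $f$ is $G$-equivariant and $f|_{A_{sing}}\colon A_{sing} \to B_{sing}$ is bijective, a count of orbit sizes forces $G_a = G_{f(a)}$ for every $a \in A_{sing}$. Choose small $G_a$-invariant closed disk neighborhoods $D_a$ of each $a \in A_{sing}$ and $E_{f(a)}$ of $f(a)$ with $f(D_a) \subseteq E_{f(a)}$, and let $N$ and $N'$ be their $G$-saturations. On each $D_a$, the map $f|_{D_a}$ is $G_a$-equivariant into the $G_a$-invariant disk $E_{f(a)}$ whose only singular point is $f(a)$, so a coning argument (homotope $f|_{D_a}$ rel $\{a\}$ to the cone on $f|_{\partial D_a}$) produces a $G_a$-equivariant homotopy to an isovariant map $D_a \to E_{f(a)}$. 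Extending $G$-equivariantly yields a $G$-homotopy from $f$ to a map $f'$ which is isovariant on $N$ and sends $\partial N$ into $\partial N'$.

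\textbf{Pushing off the singular set on the free part.} Let $A_0 = A \setminus \Int N$ and $B_0 = B \setminus \Int N'$, both compact free $G$-manifolds with boundary. Dividing by $G$, the map $f'$ descends to $\bar f'\colon A_0/G \to B/G$ sending $\partial(A_0/G)$ into $\partial(N'/G) \subseteq B_0/G$, and the remaining task is to deform $\bar f'$ rel $\partial(A_0/G)$ into $B_0/G$. After a small deformation one may assume $\bar f'$ is transverse to the finite set $B_{sing}/G$, so $\bar f'^{-1}(B_{sing}/G)$ is a finite set of interior preimage points of $A_0/G$, each carrying a local degree. The $1$-connectivity of $f$ in part~(1) (respectively $2$-connectivity in part~(2)) guarantees that these stray preimages pair up via embedded Whitney arcs in the high-dimensional free quotient ($n \geqslant 4$) and can be cancelled by Whitney moves, after which $\bar f'(A_0/G) \subseteq B_0/G$. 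Lifting back to the $G$-covers yields the desired isovariant equivariant map.

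\textbf{Homotopy statement, part~(2).} For the uniqueness conclusion I would apply the same program to the given $G$-homotopy $F\colon A \times I \to B$ between $f_0$ and $f_1$, regarded as a $G$-map from the pseudo-free $G$-manifold $A \times I$ (singular set a disjoint union of arcs $A_{sing} \times I$) to $B$. Because $F$ is already isovariant on $A \times \partial I$, the normalization step is performed rel $A \times \partial I$; the pushing-off step then runs in one extra parameter, and the hypothesis that $F$ is $2$-connected takes the role of $1$-connectivity in part~(1). The output is a $G$-isovariant homotopy from $f_0$ to $f_1$.

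\textbf{Main obstacle.} The hardest point is the pushing-off step in the topological category without any local linearity assumption: the pair $(E_{f(a)}/G_a, \partial E_{f(a)}/G_a)$ is not a priori a cone pair, so transversality and Whitney cancellation must be executed using topological manifold technology rather than smooth bundle structures. I would handle this by noting that $B_{sing}$ is forward-tame in $B$ as a discrete subset (the same tameness invoked in the proof of Proposition~\ref{prop:GCWstructure}), which supplies the mapping-cylinder neighborhood structure needed to make $\bar f'$ topologically transverse to each $[b_i]$ and to carry out the Whitney moves in the free quotient, where standard topological manifold techniques apply in dimensions $n \geqslant 4$.
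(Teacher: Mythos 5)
This theorem is cited from \cite[Theorem~A.3]{CDK}; the present paper gives no proof of it, so your sketch can only be assessed on its own merits. The normalization step is where it breaks down. You assume each singular point $a$ has a $G_a$-invariant closed \emph{disk} neighborhood $D_a$ and then cone off $f|_{\partial D_a}$. But a pseudo-free action on a topological manifold need not be locally linear, so such disk neighborhoods need not exist: the end of $A_{free}$ at a singular point is tame but carries a Siebenmann finiteness obstruction, and indeed the paper has to \emph{prove} that the action is locally conelike in Lemma~\ref{lem:mwb}, and only under the full Hypotheses~(ABC\tp) with $n \geqslant 5$. Your fallback via forward-tameness supplies only a mapping-cylinder neighborhood of each singular point, which is not a cone on a sphere, so ``homotope $f|_{D_a}$ rel $\{a\}$ to the cone on $f|_{\partial D_a}$'' is not a defined operation in that target; and even where both $D_a$ and $E_{f(a)}$ are genuine cones, coning off $f|_{\partial D_a}$ additionally requires $f(\partial D_a) \subseteq \partial E_{f(a)}$, which does not follow from $f(D_a) \subseteq E_{f(a)}$.

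Two further concerns. The statement permits $n=4$, but topological Whitney moves in dimension 4 require a good fundamental group in the sense of Freedman--Quinn, and $\pi_1(A_{free}/G)$ is an extension of $G$ by $\pi_1 A$ and can be arbitrary, so the cancellation step does not run as stated in $n=4$. And you never actually connect the $1$-connectivity (resp.\ $2$-connectivity) hypothesis on $f$ to the assertion that the stray preimages ``pair up via embedded Whitney arcs''; showing that preimages come in cancelling pairs and that the joining arcs map under $\bar f'$ to paths that are null in the relative sense needed for the Whitney move is exactly where the connectivity hypothesis has to earn its keep, and without that the central step of the argument is a black box.
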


\begin{prop}\label{prop:psi}
The map $\psi:\mcS^{iso}(X,\G)\to \mcS(X,\G)$ of Theorem~\ref{thm:equi-iso} is bijective.
\end{prop}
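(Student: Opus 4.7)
The plan is to reduce to Theorem~\ref{thm:CDK_AppA} by quotienting out the torsion-free finite-index subgroup $\Gamma_0 \triangleleft \Gamma$ of Hypothesis~(A). Setting $G := \Gamma/\Gamma_0$, $\ol{M} := M/\Gamma_0$, and $\ol{X} := X/\Gamma_0$, Corollary~\ref{cor:phi_bijects} and Proposition~\ref{prop:onepoint} make $\ol{M}$ and $\ol{X}$ into compact pseudo-free $G$-manifolds of dimension $n \geqslant 4$ whose singular sets carry a one-point fixed set for each nontrivial finite subgroup of $\Gamma$. Since $\Gamma_m \cap \Gamma_0 = 1$ for every $m$, the natural map $\Gamma_m \to G_{[m]}$ is an isomorphism, so a $\Gamma$-equivariant map $M \to X$ is $\Gamma$-isovariant if and only if the descended $G$-map $\ol{M} \to \ol{X}$ is $G$-isovariant, and the same equivalence holds for $\Gamma$-homotopies.

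I would first establish the preliminary fact that every $\Gamma$-equivariant homotopy equivalence $f : M \to X$ restricts to a bijection $f|_{M_{sing}} : M_{sing} \to X_{sing}$. For $m \in M_{sing}$, the image $f(m)$ lies in $X^{\Gamma_m} \subset X_{sing}$; if $g : X \to M$ is a $\Gamma$-homotopy inverse, then $g(f(m)) \in M^{\Gamma_m} = \{m\}$, forcing $g \circ f|_{M_{sing}} = \id$, and symmetrically $f \circ g|_{X_{sing}} = \id$. This bijectivity then descends to $\ol{f}|_{\ol{M}_{sing}}$.

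For surjectivity of $\psi$, given $[M, f, \Gamma] \in \mcS(X, \Gamma)$, the descended map $\ol{f}$ is a $1$-connected $G$-map that meets the singular-set hypothesis of Theorem~\ref{thm:CDK_AppA}(1), so it is $G$-homotopic to some $G$-isovariant $\ol{f}{}'$. Using the homotopy-lifting property of the regular covers $M \to \ol{M}$ and $X \to \ol{X}$, I would lift this $G$-homotopy starting from $f$ to a homotopy $H_t : M \to X$; uniqueness of path lifts, applied to $t \mapsto H_t(\gamma m)$ and $t \mapsto \gamma H_t(m)$ (both lifts of the same path in $\ol{X}$ starting at $\gamma f(m)$), shows that each $H_t$ is $\Gamma$-equivariant. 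The endpoint $f' := H_1$ is then a $\Gamma$-equivariant homotopy equivalence, $\Gamma$-isovariant by the dictionary above, so $[M, f', \Gamma] \in \mcS^{iso}(X, \Gamma)$ projects to $[M, f, \Gamma]$.

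For injectivity, suppose $[M, f, \Gamma]$ and $[M', f', \Gamma]$ in $\mcS^{iso}(X, \Gamma)$ become equivalent in $\mcS(X, \Gamma)$ via a $\Gamma$-equivariant homeomorphism $h : M \to M'$ and a $\Gamma$-homotopy $f_0 := f' \circ h \simeq f_1 := f$. Since $h$ is automatically isovariant, both $f_0, f_1$ are $\Gamma$-isovariant, and their descents $\ol{f_0}, \ol{f_1}$ are $G$-isovariant, $2$-connected, and $G$-homotopic. Theorem~\ref{thm:CDK_AppA}(2) yields a $G$-isovariant homotopy between them, which I would lift just as above to a $\Gamma$-isovariant homotopy from $f_0$ to $f_1$, using the same homeomorphism $h$. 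The main obstacle throughout is arranging that both $\Gamma$-equivariance and $\Gamma$-isovariance survive the lifting; this is controlled by covering-space uniqueness together with the isomorphism $G_{[m]} \cong \Gamma_m$.
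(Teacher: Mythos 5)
Your proposal is essentially the same as the paper's proof: pass to the quotient by the finite-index torsion-free normal subgroup $\G_0$, apply Theorem~\ref{thm:CDK_AppA} (part (1) for existence of an isovariant replacement, part (2) for uniqueness) to the descended $G$-map, then lift using the covering homotopy property and uniqueness of path lifting to transfer equivariance and isovariance back up to $\G$. The only cosmetic differences are your alternative argument that $f|_{M_{sing}}$ is a bijection (via $g\circ f|_{M_{sing}}=\id$ rather than the paper's observation that each $f^H$ is a homotopy equivalence of singletons) and your more explicit unpacking of the injectivity step, both of which are correct and consistent with the paper's reasoning.
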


\begin{proof}
Let $[M,f,\G]\in \mcS(X,\G)$.
Since $f: M \to X$ is a $\G$-homotopy equivalence of $\G$-spaces, for every subgroup $H$ of $\G$, the restrictions $f^H: M^H \to X^H$ are homotopy
equivalences.
Now, by Proposition~\ref{prop:onepoint}, for each nontrivial finite subgroup $H$ of $\G$, $M^H$ and $X^H$ are single points.
Thus $f_{sing}: M_{sing} \to X_{sing}$ is a bijection.

By hypothesis, there exists a normal torsion-free subgroup $\G_0 \triangleleft \G$ with $G := \G/\G_0$ finite.
Since the restricted actions $\G_0 \curvearrowright M$ and $\G_0 \curvearrowright X$ are free, proper, and cocompact, the quotients $M/\G_0$ and $X/\G_0$ are compact $G$-manifolds.
By Theorem~\ref{thm:CDK_AppA} applied to both the $G$-homotopy equivalence $f/\G_0: M/\G_0 \to X/\G_0$ and any choice of $G$-homotopy-inverse, $f/\G_0$ is
$G$-homotopic to a $G$-isovariant homotopy equivalence $g: M/\G_0 \to X/\G_0$, unique up to $G$-isovariant homotopy.

By the Covering Homotopy Property applied to the cover $M \to M/\G_0$, the $G$-homotopy from $f/\G_0$ to $g$ is covered
by a unique homotopy $F: M \times I \to X$ from $f: M \to X$, to some $\hat{g}: M \to X$ covering $g$.
By uniqueness of path lifting, it follows that $F$ is $\G$-equivariant.
Then, since $\hat{g}$ is $\G$-equivariant and $g$ is $G$-isovariant,  an elementary diagram
chase shows that $\hat{g}$ is a $\G$-isovariant map.
Since $\hat{g}$ is a $\G$-equivariant map covering a $G$-isovariant homotopy equivalence $g$, by similar reasoning it follows that $\hat{g}$ is a
$\G$-isovariant homotopy equivalence, unique up to $\G$-isovariant homotopy.
Then $[M,\hat{g},\G] \in \mcS^{iso}(X,\G)$ and $\psi[M,\hat{g},\G] = [M,f,\G]$.
Thus $\psi$ is surjective.
Furthermore, since $g$ is unique up to $G$-isovariant homotopy and $\hat{g}$ is unique up to $\G$-isovariant homotopy, $\psi$ is injective.
\end{proof}

\begin{proof}[Proof of Theorem~\ref{thm:equi-iso}]
Immediate from Corollary~\ref{cor:phi_bijects} and Proposition~\ref{prop:psi}.
 \end{proof}

\section{Reduction to classical surgery theory}\label{sec:repulsion}

In this section we show that each cocompact proper contractible $\G$-manifold $[M,\G]$ in $\mcS(\G)$ is well-behaved in a neighborhood of the discrete set $M_{sing}$. We use this to interpret $\mcS^{iso}(X,\G)$ as a structure set of a compact manifold-with-boundary, which we call a \emph{compact $\bdry$-manifold}.
We begin with a quick fact about uniqueness.

\begin{lem}\label{lem:sphere}
Let $A$ be a manifold of dimension greater than four, and let $a \in A$.
Let $C$ be a compact neighborhood of $a$ in $A$ so that $C-\{a\}$ is homeomorphic to $B \times [0,\infty)$ where $B$ is a manifold.
Then $C$ is homeomorphic to a disc.
\end{lem}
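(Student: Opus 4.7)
The plan is to first identify $C$ with the topological cone on $B$, then use the fact that the cone point inherits the local Euclidean structure from the ambient manifold $A$ to force $B$ to be a sphere, and finally conclude $C\cong D^n$.

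First I would construct a homeomorphism $\Phi\colon C\to CB$, where $CB:=(B\times[0,\infty])/(B\times\{\infty\})$ is the closed cone on $B$. Define $\Phi$ by composing the given homeomorphism $\phi\colon C\setminus\{a\}\to B\times[0,\infty)$ with the quotient map, and send $a$ to the cone point. Before checking continuity I would observe that $B$ is compact: the subset $\phi^{-1}(B\times\{0\})$ is closed in $C$ because any open cone-neighborhood $\{a\}\cup\phi^{-1}(B\times(T,\infty))$ of $a$ is disjoint from it, so it is a closed subset of the compact space $C$, and hence $B\cong B\times\{0\}$ is a closed $(n-1)$-manifold. Continuity of $\Phi$ at $a$ then follows because each open neighborhood of the cone point in $CB$ contains a set of the form $\{\text{cone point}\}\cup(B\times(T,\infty))$, whose $\Phi$-preimage $\{a\}\cup\phi^{-1}(B\times(T,\infty))$ has compact (hence closed) complement $\phi^{-1}(B\times[0,T])$ in $C$. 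Thus $\Phi$ is a continuous bijection from a compact space to a Hausdorff space, hence a homeomorphism.

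Next I would argue that $B\simeq S^{n-1}$. Since $a$ lies in the interior of $C\subset A$ and $A$ is an $n$-manifold, the point $a$ admits a neighborhood basis in $C$ consisting of open sets homeomorphic to $\mathbb{R}^n$. Transported through $\Phi$, these provide a basis at the cone point whose punctured versions are each homotopy equivalent to $S^{n-1}$. Interleaving this basis with the cofinal basis of cone neighborhoods $\{\text{cone point}\}\cup(B\times(T,\infty))$, whose punctured versions each deformation retract to $B$, yields an infinite tower of nested inclusions in which any two-step composite is a homotopy equivalence (being either an inclusion of one punctured Euclidean neighborhood into another, or one product tail into another). A standard two-out-of-three argument then forces $B\simeq S^{n-1}$.

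Finally, since $B$ is a closed manifold of dimension $n-1\geq 4$ homotopy equivalent to $S^{n-1}$ (in particular simply connected), the topological Poincar\'e conjecture, due to Newman for $n-1\geq 5$ and Freedman for $n-1=4$, gives $B\cong S^{n-1}$. The induced homeomorphism yields $C\cong CB\cong CS^{n-1}\cong D^n$. The main obstacle is the Poincar\'e conjecture input; the preceding work is essentially unpacking the hypothesis $C\setminus\{a\}\cong B\times[0,\infty)$ to reveal both that $C$ is a cone and that its link $B$ must be a homotopy $(n-1)$-sphere.
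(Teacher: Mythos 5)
Your overall strategy is correct but genuinely different from the paper's after the first step. Both proofs begin by identifying $C$ with the closed cone on $B$ via uniqueness of the one-point compactification. After that they diverge: you prove $B\simeq S^{n-1}$ directly by interleaving Euclidean and cone-type neighborhood bases at $a$ and applying a 2-out-of-6 argument to the resulting tower, and then invoke the topological Poincar\'e conjecture. The paper instead places a small closed disc $D$ about $a$ inside $\Int C$, shows via Seifert--van Kampen and excision that $(C-\Int D;\,\partial D,B)$ is a simply-connected $h$-cobordism, and applies the $h$-cobordism theorem, which delivers $B\cong\partial D\cong S^{n-1}$ and $C\cong D$ in one stroke. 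Both routes work in the stated dimension range (with $n=5$ requiring Freedman's work, either for the Poincar\'e conjecture in dimension $4$ or for the $h$-cobordism theorem in dimension $5$); the $h$-cobordism route packages the homotopy-theoretic input more compactly, while your route isolates the statement ``the link of $a$ is a homotopy sphere,'' which is conceptually clarifying.

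There is, however, a genuine circularity in your argument that $B$ is compact. You assert that the cone-neighborhoods $\{a\}\cup\phi^{-1}(B\times(T,\infty))$ are open in $C$; but this is equivalent to $\phi^{-1}(B\times[0,T])$ being closed, hence compact, in $C$, which is exactly the statement that $B\times[0,T]$ --- equivalently $B$ --- is compact. That is the thing you are trying to prove. The conclusion is of course true, and the fix is easy: either observe (as the paper does) that $C$ is a compact topological $\partial$-manifold with $\partial C=B$, so $B$ is compact; or take a Euclidean disc neighborhood $D$ of $a$ with $D\subset C$ and note that $\phi(D-\{a\})\cap(B\times\{0\})=\varnothing$ by invariance of boundary (since $D-\{a\}$ is an open boundaryless manifold), so $D$ is an open neighborhood of $a$ missing $\phi^{-1}(B\times\{0\})$, which is therefore closed and hence compact. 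Once compactness of $B$ is in hand, your openness claim for the cone-neighborhoods is justified, and the rest of the continuity argument and the interleaving go through. One smaller point worth making explicit is why the inclusion of one punctured Euclidean neighborhood of $a$ into another is a homotopy equivalence: this follows from excision applied to local homology $H_*(U,U-\{a\})\cong H_*(\mathbb{R}^n,\mathbb{R}^n-\{0\})$, simple connectivity of $U-\{a\}$ for $n\geqslant 3$, and Whitehead's theorem (applicable since open subsets of manifolds are ANRs and hence have CW homotopy type).
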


Denote the \emph{closed cone} on a space $S$ by $cS := S \times [0,1] / S \times \{0\}$.

\begin{proof}
There is a homeomorphism $\varphi: B \times [0,\infty) \to C-\{a\}$.
By uniqueness of the one-point compactification of a space,  there are basepoint-preserving homeomorphisms:
\[
C ~=~ \Cl_A \varphi(B \times [0,\infty))
~\cong~ (B \times [0,\infty))^+
~\cong~ cB.
\]
Observe that $C$ is a $\bdry$-manifold with $\bdry C = B$, hence $B$ is compact. 
Note also that $B$ is simply-connected, since $\pi_1B  \cong \pi_1(C-\{a\}) \cong \pi_1C \cong \pi_1 cB  \cong 1$ where the second isomorphism follows from the Seifert--van Kampen Theorem applied to the decomposition $C = \Int D \cup (C - \{a\})$ with  $D$  a closed disc neighborhood of $a$ in the manifold $\Int{C}$.
Excision shows that $\bdry D \hookrightarrow (C-\Int{D})$ induces an isomorphism on homology.
Thus $(C-\Int{D};\bdry D, B)$ is an $h$-cobordism, and the $h$-cobordism theorem \cite{Smale_Poincare,Freedman_Poincare} shows that is $C - \Int D$ is homeomorphic to the product $\bdry D \times I$.
Hence $B \cong \bdry D$, a sphere and $C \cong D$, a disc.
\end{proof}

An action of a group $\G$ on a space $X$ is \emph{locally conelike} if every orbit $\G x$ has a $\G$-neighborhood that is $\G$-homeomorphic to $\G \times_{\G_x} cS(x)$ for some $\G_x$-space $S(x)$.

\begin{lem}\label{lem:mwb}
Suppose $\G$ satisfies Hypotheses~(ABC\tp) holds with $\dim X \geqslant 5$.
Let $[M,f,\G] \in \mcS^{iso}(X,\G)$.
There is a compact  $\bdry$-manifold $\ol{M}$ with interior $M_{free}/\Gamma$.
Furthermore, the action $\G \curvearrowright M$ is locally conelike.
\end{lem}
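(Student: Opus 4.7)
The plan is to apply Siebenmann's end theorem to $M_{free}/\G$ in order to produce $\ol{M}$, and then pull the resulting collars back $\G$-equivariantly to $M$ to produce cone neighborhoods of each singular orbit. Free orbits are automatically locally conelike, since a topological manifold is locally Euclidean, so the real work concerns the finitely many singular orbits.

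First I would set up the end theory. By Proposition~\ref{prop:onepoint} (applicable under Hypotheses~(AB)), the $\G$-action on $M$ is pseudo-free with finite stabilizers on $M_{sing}$; by cocompactness $M_{sing}/\G$ is a finite set, labeled $\{[x_1],\ldots,[x_r]\}$ with stabilizers $H_i := \G_{x_i}$. The same tameness argument used in the proof of Proposition~\ref{prop:GCWstructure} shows that $M_{free}/\G = (M/\G) \setminus (M_{sing}/\G)$ is reverse-tame, with exactly one end $\eps_i$ approaching each puncture $[x_i]$, and with the fundamental group at $\eps_i$ controlled by $H_i$ since $H_i$ acts freely on a deleted $H_i$-neighborhood of $x_i$.

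Next I would invoke Siebenmann's end theorem \cite{Siebenmann_thesis} (with the five-dimensional refinement of Freedman--Quinn where needed, since $\dim M = \dim X \geq 5$). Each end $\eps_i$ admits a topological collar neighborhood $L_i \times [0,1)$ provided its Wall-type end obstruction $\sigma_\infty(\eps_i) \in \wt{K}_0(\Z \pi_1(\eps_i))$ vanishes. Once they do, these collars compactify $M_{free}/\G$ to a compact $\bdry$-manifold $\ol{M}$ with $\Int \ol{M} = M_{free}/\G$ and $\bdry \ol{M} = \bigsqcup_i L_i$. Pulling the collars back $\G$-equivariantly along the cover $M_{free} \to M_{free}/\G$ produces, for each orbit $\G x_i$, a $\G$-equivariant identification of a punctured $\G$-neighborhood of $\G x_i$ in $M$ with $\G \times_{H_i} (\wt{L}_i \times [0,1))$, where $\wt L_i$ is the $H_i$-cover of $L_i$. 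One-point compactifying each $H_i$-slice at the puncture (the cone point is $H_i$-fixed) then identifies this neighborhood together with the orbit $\G x_i$ as $\G \times_{H_i} c\wt{L}_i$. This exhibits the local cone structure $S(x_i) = \wt{L}_i$; a non-equivariant application of Lemma~\ref{lem:sphere} moreover identifies each $\wt L_i$ with a sphere, although the stated conclusion does not require this.

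The main obstacle will be the vanishing of the end obstructions $\sigma_\infty(\eps_i)$. This is where Hypothesis~(C\tp iii), the Farrell--Jones Conjecture in $K$-theory, is essential: combined with Proposition~\ref{prop:sumWh}, it yields that the $E_\fin\G$-assembly map in connective $K$-theory is an isomorphism. Decomposing the domain via the Bredon homology of $E_\fin\G$, I would identify the tuple $(\sigma_\infty(\eps_i))_i$ with a class whose image in $K_0(\Z\G)$ is controlled by the finiteness of $X_{free}/\G$ from Hypothesis~(C\tp i), together with the isovariant homotopy equivalence $M \simeq X$ produced by Proposition~\ref{prop:psi}. Formally this diagram chase is to be encapsulated by Corollary~\ref{cor:sumWh}, whose role in the argument is flagged in the outline of the paper.
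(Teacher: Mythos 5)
Your proposal is essentially the paper's own argument: tameness of the ends of $M_{free}/\G$ via Quinn, Siebenmann's end theory to produce the compactification once the end obstructions vanish, injectivity of the sum map on $\wt{K}_0$ (Corollary~\ref{cor:sumWh}) combined with the finiteness of $X_{free}/\G$ from Hypothesis~(C\tp i) to force the individual end obstructions to vanish, and pulling the collars back equivariantly plus one-point compactification (via Lemma~\ref{lem:sphere}) to exhibit the cone structure. Two small points of imprecision worth knowing: the isovariant homotopy equivalence $f$ is already part of the datum $[M,f,\G]\in\mcS^{iso}(X,\G)$ --- it is not something you need to extract from Proposition~\ref{prop:psi} --- and the ``image controlled by the finiteness of $X_0$'' step is made precise in the paper by Siebenmann's sum theorem, which states directly that the inclusion-induced map $A_0\colon\bigoplus_j\wt{K}_0(\Z\G_j)\to\wt{K}_0(\Z\G)$ sends the tuple $(\sigma(E_j))_j$ of end obstructions to the Wall finiteness obstruction $\sigma(M_0)$, which equals $\sigma(X_0)=0$; injectivity of $A_0$ from Corollary~\ref{cor:sumWh} then kills each $\sigma(E_j)$. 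Your phrasing via the Bredon decomposition of $E_\fin\G$ points at the same mechanism (it is how Corollary~\ref{cor:sumWh} is proved), but the sum theorem is the clean way to relate the global finiteness obstruction to the end obstructions.
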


\begin{proof}
Since $M_{sing}/\G$ is compact and since   $M_{sing}$ is discrete by Proposition~\ref{prop:onepoint}, $M_{sing}/\G$ is a finite set.  
Then the manifold $M_0 := M_{free}/\G$ has finitely many ends, one for each orbit $ \G p_1, \ldots,  \G p_m$ of $M_{sing}$.
Write $\Gamma_j := \Gamma_{p_j}$ for the isotropy groups.

Since the ends of $M_{free}$ are tame, and each end of $M_0$ is finitely covered by an end of $M_{free}$, Propositions~2.6 and 3.6 of \cite{Quinn_HSS} shows the ends of
$M_0$ are tame.
If $M_0$ were smooth, then Siebenmann's thesis \cite{Siebenmann_thesis} gives a CW structure on $M_0$ which is a union of subcomplexes $M_0 = K \cup E_1 \cup \cdots \cup E_m$ with $K$ a finite complex and each $E_i$ a connected, finitely dominated complex with $\pi_1 E_j = \G_j$.
For $M_0$ a topological manifold, it follows from \cite[p123, Theorem III:4.1.3]{KS} that there is a proper homotopy equivalence $M_0 \simeq K \cup E_1 \cup \cdots \cup E_m$ with $K$ and $E_j$ as above.
Let $\Wh_0(\G_j)$ denote the reduced projective class group $\wt K_0(\Z\G_j)$ and $\sigma(E_j) \in \Wh_0(\G_j)$  the Wall finiteness obstruction. 

Consider inclusion-induced map of projective class groups:
\[
A_0 : \begin{tikzcd}\displaystyle\prod_{j=1}^m \Wh_0(\G_j) \rar & \Wh_0(\G).\end{tikzcd}
\]  
Since $K$ is a finite complex, the sum theorem of \cite{Siebenmann_thesis} shows 
\[
A_0(\sigma(E_1), \dots, \sigma(E_m)) = \sigma(M_0).
\]
On the one hand,  $X_0 := X_{free}/\G$ has the homotopy type of a finite CW complex by Hypothesis~(C\tp i), and $f_{free}/\G: M_0 \to X_0$ is a homotopy equivalence,  
so $ \sigma(M_0)= \sigma(X_0) = 0$.
On the other hand, $A_0$ is injective by Corollary~\ref{cor:sumWh} below.  Hence $\sigma(E_i) = 0$ for each $i$.  
Therefore, by Siebenmann's theorem (\cite{Siebenmann_thesis}, \cite[p.~214]{FQ}), there is a compact $\bdry$-manifold $\ol{M}$ with interior $M_0$.

Furthermore, $M-M_{sing} = \wt{M_0}$ is $\G$-homeomorphic to the universal cover $\wt{\ol{M}}-\bdry\wt{\ol{M}}$.
Consider the closure $C := \Cl_M\left(\bdry\wt{\ol{M}} \times [0,\infty)\right)$, which is a $\G$-cocompact neighborhood of $M_{sing}$ in $M$.
By Lemma~\ref{lem:sphere}, each component of $ C$ is a disc.
Then
\[
C ~\cong~ \bigsqcup_{j=1}^m \G \times_{\G_j} cS^{n-1}(p_j).
\]
Therefore the action of $\G$ on $M$ is locally conelike.
\end{proof}

By Lemma~\ref{lem:mwb}, we may choose a compact $\bdry$-manifold $\ol{X}$ with interior $X_{free}/\G$.
Note $\pi_1(\ol{X}) \iso \G$.
Enumerate the connected components of the boundary:
\[
\bdry\ol{X} ~=~ \bigsqcup_{j=1}^m \bdry_j\ol{X}.
\]
Observe each $\bdry_j\ol{X}$ has universal cover homeomorphic to $S^{n-1}$ and $\pi_1(\bdry_j\ol{X}) \iso \G_j$.

\begin{undefn}
A \emph{structure} on $(\ol{X}, \bdry\ol{X})$ is a pair $(\ol{M},f)$ where $\ol{M}$ is a compact topological $\bdry$-manifold and $f:(\ol{M},\bdry\ol{M})\to
(\ol{X},\bdry\ol{X})$ is a homotopy equivalence of pairs.
Two structures $(\ol{M},f)$ and $(\ol{M}{}',f')$ are equivalent if there are an $h$-cobordism of pairs, $(W,\bdry_0 W)$ from $(\ol{M},\bdry\ol{M})$ to $(\ol{M}{}', \bdry\ol{M}{}')$ and an extension $F$ of $f \sqcup f'$. The \emph{structure set} $\mcS^{h}_\TOP(\ol{X},\bdry\ol{X})$ is the set of equivalence classes of structures on $(\ol{X},\bdry\ol{X})$.
\end{undefn}

\begin{lem}\label{lem:compactification}
Suppose $\G$ satisfies Hypotheses~(ABC\tp) and $n \geqslant 5$.
There is a bijection
\[
\Phi : \begin{tikzcd}\mcS^{h}_\TOP(\ol{X},\bdry\ol{X}) \rar & \mcS^{iso}(X,\G)\end{tikzcd} ;~ [\ol{X},\id] \longmapsto [X,\id,\G].
\]
\end{lem}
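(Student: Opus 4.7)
The plan is to exhibit $\Phi$ together with an explicit inverse via a coning/uncoining procedure, then to verify well-definedness. For the forward direction, given a representative $(\ol{M}, f)$ with $f : (\ol{M}, \bdry \ol{M}) \to (\ol{X}, \bdry \ol{X})$ a homotopy equivalence of pairs, I first identify $\pi_1(\ol{M}) \iso \G$ via $f$ and pass to the universal cover $\wt{\ol{M}}$. Each boundary component of $\wt{\ol{M}}$ is a closed simply-connected topological manifold, homotopy equivalent (via the lift of $f$) to a corresponding boundary component of $\wt{\ol{X}}$, which is homeomorphic to $S^{n-1}$. Since $n - 1 \geqslant 4$, the topological Poincar\'e conjecture (Smale for $n \geqslant 6$, Freedman for $n = 5$) provides a homeomorphism with $S^{n-1}$. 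Coning off these sphere components then produces a topological $\G$-manifold
\[
M ~:=~ \wt{\ol{M}} \cup_{\bdry \wt{\ol{M}}} c\bigl(\bdry \wt{\ol{M}}\bigr)
\]
with discrete singular set (the cone points) and locally conelike $\G$-action. Extending the lift of $f$ by sending each cone point to the corresponding fixed point in $X_{sing}$ yields a $\G$-isovariant homotopy equivalence $M \to X$ representing $\Phi[\ol{M}, f]$.

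The inverse sends $[M, f, \G] \in \mcS^{iso}(X, \G)$ to $(\ol{M}, \bar f)$, where $\ol{M}$ is the compact $\bdry$-manifold supplied by Lemma~\ref{lem:mwb} (with $M \setminus M_{sing}$ as its $\G$-cover interior) and $\bar f$ is the homotopy equivalence of pairs obtained by descending $f|_{M_{free}}$ to $M_{free}/\G \to X_{free}/\G$ and extending uniquely to the compactifications. That these two assignments are mutually inverse on representatives is essentially tautological, since coning on sphere boundaries and removing open collars of the singular set are inverse operations.

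The main obstacle is well-definedness of $\Phi$ on equivalence classes. If $(W, \bdry_0 W)$ is an $h$-cobordism of pairs from $(\ol{M}, \bdry \ol{M})$ to $(\ol{M}{}', \bdry \ol{M}{}')$ extending $f \sqcup f'$, then passing to the $\G$-cover $\wt{W}$ and filling each component of $\bdry_0 \wt{W}$ (each being simply-connected, hence homeomorphic to $S^{n-1} \times I$ by the classical $h$-cobordism theorem) with a copy of $D^n \times I$ produces an isovariant $h$-cobordism $\wh{W}$ between the coned-off $\G$-manifolds $M$ and $M'$. The crux is to argue that $\wh{W}$ is a product cobordism, that is, that its equivariant Whitehead torsion vanishes modulo the contributions from the finite isotropies $\G_j$. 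This is the algebraic content packaged in Corollary~\ref{cor:sumWh}, itself a consequence of Proposition~\ref{prop:sumWh}; combined with an isovariant straightening of the induced map, it yields the required $\G$-equivariant homeomorphism $M \iso M'$ and $\G$-isovariant homotopy. Well-definedness of the inverse is immediate, since a $\G$-equivariant homeomorphism $M \to M'$ restricts to a homeomorphism of the $\G$-free parts and hence of their $\bdry$-manifold compactifications, giving a trivial $h$-cobordism.
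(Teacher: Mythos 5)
Your definitions of $\Phi$ and its inverse $\Psi$ match the paper's: cone off the universal covers of the sphere boundary components to go forward, and use Lemma~\ref{lem:mwb}'s Siebenmann completion to go back. Your identification of the boundary components as $S^{n-1}$ via the Poincar\'e conjecture is the correct filling of the gap the paper delegates to Lemma~\ref{lem:mwb}.

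Where you genuinely diverge is the well-definedness of $\Phi$, and there the proposal has a real gap. You propose to fill in $\bdry_0\wt{W}$ of the $h$-cobordism of pairs with copies of $D^n \times I$, producing an isovariant $h$-cobordism $\wh{W}$ between $M$ and $M'$, and then to argue $\wh{W}$ is a product because "its equivariant Whitehead torsion vanishes," citing Corollary~\ref{cor:sumWh}. But Corollary~\ref{cor:sumWh} is a statement about ordinary Whitehead groups; it does not by itself give a product structure on a stratified $h$-cobordism. To turn the vanishing of an algebraic obstruction into a geometric product structure you need Quinn's homotopically stratified $s$-cobordism theory (his Theorem~1.8 in \cite{Quinn_HSS}), which is precisely the machinery the paper invokes later in the proof of Theorem~\ref{thm:hcobordism_rigidity}. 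That theorem carries a dimension hypothesis of $\geqslant 6$, whereas the present lemma is asserted for $n \geqslant 5$; your route therefore does not obviously cover the borderline case $n = 5$. Your phrase "combined with an isovariant straightening of the induced map" also papers over the step of showing that the product structure respects the reference maps to $X$ up to isovariant homotopy.

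The paper's argument deliberately avoids all of this. It first observes — via Corollary~\ref{cor:sumWh} at $q = 1$, realization of $h$-cobordisms, and the classification of $h$-cobordisms by torsion — that the equivalence $[\ol{M'},\ol{f'}] = [\ol{M},\ol{f}]$ can be arranged so that $\ol{M'} = \ol{M} \cup_{\bdry\ol{M}} W$ for an ordinary boundary $h$-cobordism $W$. It then applies an Eilenberg swindle to $W$ alone: stacking $W$ and its inverse infinitely produces a homeomorphism $W - \bdry\ol{M'} \cong \bdry\ol{M} \times [0,1)$ rel $\bdry\ol{M}$, and one-point compactification converts this into a homeomorphism $W \cup c(\bdry\ol{M'}) \cong c(\bdry\ol{M})$. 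This is entirely non-equivariant and elementary — it never needs $W$ (or any equivariant cobordism) to be a product, and it works in the fundamental-group-good dimension $n = 5$ since the boundary pieces are spherical space forms. You should replace the Quinn-based step with this swindle, or else restrict the lemma to $n \geqslant 6$ and carefully invoke Quinn's theory, being explicit about which of his results supply the product structure. Finally, your claim that a $\G$-homeomorphism of interiors gives a "trivial $h$-cobordism" of the compactifications is too strong — it gives an $h$-cobordism, not necessarily a trivial one — but this is harmless for the equivalence relation on $\mcS^h_\TOP(\ol{X},\bdry\ol{X})$.
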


\begin{proof}
We define $\Phi$ on representatives as follows.
Let $(\ol{f}, \bdry\ol{f}): (\ol{M}, \bdry\ol{M}) \to (\ol{X}, \bdry\ol{X})$ be a homotopy equivalence of pairs.
Consider the $\G$-space defined by coning off:
\[
M ~:=~ (\ol{M})\utilde \,\cup\, \bigsqcup_{j=1}^m \Gamma\times_{\Gamma_j}c((\partial_j \overline{ M})^{\sim})
\]
Here ${}\utilde$ denotes the universal cover.
Since by Lemma~\ref{lem:mwb} each $\bdry_j\ol{M}$ has universal cover $S^{n-1}$,  we obtain that $M$ is a topological manifold.
The map $(\ol{f})\utilde: (\ol{M})\utilde \to (\ol{X})\utilde$ extends to a $\G$-isovariant map $f: M \to X$.
Since $\ol{f}$ and $\bdry\ol{f}$ are homotopy equivalences and $f_{sing}: M_{sing} \to X_{sing}$ is a homeomorphism, it follows that $f: M \to X$ is an $\G$-isovariant homotopy equivalence.
We define
\[
\Phi(\ol{M},\ol{f}) ~:=~ [M,f,\G] ~\in~ \mcS^{iso}(X,\G).
\]

Next, we show that $\Phi$ is constant on equivalence classes.
Let $[\ol{M'}, \ol{f'}] = [\ol{M}, \ol{f}]$ in $\mcS^{h}_\TOP(\ol{X},\bdry\ol{X})$.
By Corollary~\ref{cor:sumWh} with $q=1$, and the fact that $h$-cobordisms are determined up to homeomorphism by their torsion, observe $\ol{M'} = \ol{M}
\cup_{\bdry\ol{M}} W$ for some $h$-cobordism $(W;\bdry\ol{M},\bdry\ol{M'})$, and $\ol{f'} \simeq \ol{f} \cup_{\bdry\ol{f}} F$ for some homotopy equivalence
$F: W \to \bdry\ol{X}$.
By lifting to a $\G$-homotopy on universal covers, it follows that $\Phi(\ol{M'},\ol{f'}) = \Phi(\ol{M} \cup W, \ol{f} \cup F)$.
By an Eilenberg swindle, which uses realization of $h$-cobordisms and triviality of $s$-cobordisms, there is a homeomorphism $W- \bdry\ol{M'} \to
\bdry\ol{M}\ \times [0,1)$ relative to $\bdry\ol{M}$.
This extends to homeomorphism relative to $\bdry\ol{M}$:
\[
W \cup c(\bdry\ol{M'}) = (W - \bdry\ol{M'})^+ ~\longrightarrow~ (\bdry\ol{M} \times [0,1))^+ = c(\bdry\ol{M}).
\]
Here ${}^+$ denotes one-point compactification.
Then we obtain a homeomorphism $M' \to M$ relative to $(\ol{M})\utilde$.
So there is a $\G$-homeomorphism $\phi: M' \to M$ such that $f' \simeq f \circ \phi$.
Hence $[M',f',\G]=[M,f,\G]$.
Therefore $\Phi$ is defined on $\mcS^{h}_\TOP(\ol{X},\bdry\ol{X})$.

We now show that $\Phi$ has a two-sided inverse:
\[
\Psi ~:~ \mcS^{iso}(X, \G) ~\longrightarrow~  \mcS^h_\TOP(\ol{X},\bdry\ol{X}).
\]
Let $[M,f,\G] \in \mcS^{iso}(X, \G)$. Consider the proper homotopy equivalence
\[
f_0 := f_{free}/\G ~:~ M_0 := M_{free}/\G ~\longrightarrow~ X_0 := X_{free}/\G.
\]

By Lemma~\ref{lem:mwb}, there is a compact $\bdry$-manifold $\ol{M}$ with interior $M_0$.
Using a collar for $\bdry\ol{M}$ in $\ol{M}$, after a small proper homotopy of $f_0$, we may assume that $f_0$ extends to a homotopy equivalence $\ol{f}:
(\ol{M}, \p\ol{M})\to (\ol{X},\p\ol{X})$ of pairs.
Define
\[
\Psi(M,f,\G) ~:=~ [\ol{M}, \ol{f}] ~\in~ \mcS_\TOP^{h}(\ol{X}, \bdry\ol{X}).
\]
Observe a different choice for the Siebenmann completion of the ends of $M_0$ would yield a pair $[\ol{M'}, \ol{f'}]$ such that
$\ol{M'}=\ol{M}\cup_{\p\ol{M}}W$, where $(W; \bdry\ol{M}, \bdry\ol{M'})$ is an $h$-cobordism, and where the map $\ol{f'}$ extends $\ol{f}$ and has image in
$\p\ol{X}$.
Therefore $\Psi$ is well-defined.
It is now straightforward to see that $\Psi$ is a two-sided inverse of $\Phi$.
\end{proof}

The geometric structure set $\mcS^h_\TOP(\ol{X},\bdry\ol{X})$ can be identified with a version of Ranicki's algebraic structure group for
the same pair. The connective algebraic structure groups $\mcS_*^h$ are the homotopy groups of the homotopy cofiber of a assembly map $\alpha\connective$, and so they fit into an exact sequence of abelian groups \cite{Ranicki_TSO}:
\[
\cdots ~\to~ H_*(A,B; \bL\connective) ~\xrightarrow{\alpha\connective}~ L^{h}_*(A,B) ~\to~ \mcS_*^h(A,B) ~\to~ H_{*-1}(A,B; \bL\connective) ~\xrightarrow{\alpha\connective}~ \cdots
\]
where $\bL\connective$ is the 1-connective algebraic $L$-theory spectrum of the trivial group.
Exactly as in \cite{CDK}, for computations we shall use the non-connective, periodic analogue:
\[
\cdots ~\to~ H_*(A,B; \bL) ~\xrightarrow{\alpha}~ L_*^h(A,B) ~\to~ \mcS_*^{per,h}(A,B) ~\to~ H_{*-1}(A,B; \bL) ~\xrightarrow{\alpha}~ \cdots
\]
where $\bL$ is the 4-periodic algebraic $L$-theory spectrum of the trivial group.

\begin{rem}\label{rem:TSO}
Following Ranicki \cite[Thm.~18.5]{Ranicki_ALTM}, there is a pointed map
\[
s: \begin{tikzcd}\mcS_\TOP(\ol{X},\bdry\ol{X}) \rar{\approx} & \mcS^h_{n+1}(\ol{X},\bdry\ol{X}).\end{tikzcd}
\]
which is a bijection for $n > 5$.   The map $s$ is called the \emph{total surgery obstruction} for homotopy equivalences.
In our case, it is also a bijection when $n=5$, since each connected component of $\bdry\ol{X}$ is a 4-dimensional spherical space form, with finite fundamental group, and so can be included with the high-dimensional ($n>5$) case by Freedman--Quinn topological surgery \cite{FQ}.
Since $\overline{X}$ is $n$-dimensional, by the Atiyah--Hirzebruch spectral sequence, we obtain:
\[
H_{n+1}(\ol{X},\bdry\ol{X}; \bL/\bL\connective) = 0 \quad\text{and}\quad H_n(\ol{X},\bdry\ol{X}; \bL/\bL\connective) \subseteq \begin{cases}\Z & \text{if } n \text{ even}\\ \Z/2 & \text{if } n \text{ odd}.\end{cases}
\]
Hence there is an exact sequence:
\[\begin{tikzcd}
0 \rar & \mcS^h_{n+1}(\ol{X},\bdry\ol{X}) \rar & \mcS^{per,h}_{n+1}(\ol{X},\bdry\ol{X}) \rar & H_n(\ol{X},\bdry\ol{X}; \bL/\bL\connective).
\end{tikzcd}\]
At the end of the proof of Theorem~\ref{thm:main} in Section~\ref{sec:structures}, we will show that the last map is zero.  In the meantime, we will proceed to compute the non-connective algebraic structure group $\mcS^{per,h}_{n+1}(\ol{X},\bdry\ol{X})$.
\end{rem}

\section{Reduction from $h$ to $-\infty$ structure groups}\label{sec:lowering}

Our goal in this section is to replace the $h$ decoration by $-\infty$ in the structure group by showing $\mcS^{per,h}_{n+1}(\ol{X},\bdry\ol{X}) \cong \mcS^{per,-\infty}_{n+1}(\ol{X},\bdry\ol{X})$.  


A group is \emph{virtually cyclic} if there is a cyclic subgroup of finite index.
There is a well-known trichotomy of types of virtually cyclic groups $V$:
\begin{enumerate}
\item[(I)] $V$ is finite
\item[(II)] there is an exact sequence with $F$ finite and $C_\infty$ the infinite cyclic group:
\[\begin{tikzcd}
1 \rar & F \rar & V \rar & C_\infty \rar & 1
\end{tikzcd}\]
\item[(III)] there is an exact sequence with $F$ finite and $D_\infty$ the infinite dihedral group:
\[\begin{tikzcd}
1 \rar & F \rar & V \rar & D_\infty \rar & 1.
\end{tikzcd}\]
\end{enumerate}
For the rest of the paper, we consider the following increasing chain of classes $\mcF$:
\begin{itemize}
\item $1$ denotes the class of all trivial groups
\item $\fin$ denotes the class of all groups of type~(I)
\item $\fbc$ denotes the class of all groups of types~(I, II) --- the finite-by-cyclics
\item $\vc$ denotes the class of all groups of types~(I, II, III)
\item $\all$ denotes the class of all groups.
\end{itemize}
Given a group $G$ and one of the above five classes $\mcF$, we shall consider the \emph{family} $\mcF(G) := \{ H \in \mcF \mid H \leqslant G \}$ of subgroups of $G$.   Each family is closed
under conjugation and subgroups.
One says that \emph{$\G$ satisfies Property~$NM_{\mcF \subset\mcG}$} if every element $V$ of $\mcG(\G) - \mcF(\G)$ is contained in a unique maximal element $V_{max}$ of  \; $\mcG(\G) - \mcF(\G)$, and if, in addition
$V_{max}$ equals  its normalizer $N_\G(V_{max})$ in $\G$.

\begin{lem}\label{lem:surprise}
Suppose $\G$ is a group satisfying Hypothesis~(B).
If $V \in \vc(\G) - \fin(\G)$, then $V$ is isomorphic to either $C_\infty$ or $D_\infty$.
\end{lem}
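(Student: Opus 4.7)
The plan is to show the finite kernel in the virtually cyclic extension must be trivial, using Hypothesis~(B) directly. Since $V$ is infinite virtually cyclic, it is of type~(II) or~(III), so there is a short exact sequence
\[
1 \longrightarrow F \longrightarrow V \longrightarrow Q \longrightarrow 1,
\]
where $F$ is finite and $Q$ is either $C_\infty$ or $D_\infty$. The goal reduces to showing $F = 1$, since then $V \cong Q$.

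First I would observe that $F$, being the kernel of a homomorphism out of $V$, is normal in $V$; in particular, every element of $V$ normalizes $F$, so $V \leqslant N_\G(F)$. Suppose for contradiction that $F$ is nontrivial. Then $F$ is a nontrivial finite subgroup of $\G$, so Hypothesis~(B) forces $N_\G(F)$ to be finite. But $V$ is infinite, contradicting the inclusion $V \leqslant N_\G(F)$. Hence $F$ must be trivial and $V \cong Q \in \{C_\infty, D_\infty\}$.

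There is essentially no obstacle here; the argument is a one-step application of Hypothesis~(B) to the canonical finite normal subgroup of any infinite virtually cyclic group. The only thing worth stating carefully is that the kernel $F$ in the standard extension description is indeed a normal subgroup of $V$ (rather than only of some finite index subgroup), which is immediate from the definition. No other properties of $\G$, its action, or the ambient manifold are needed.
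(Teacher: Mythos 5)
Your proof is correct and is essentially identical to the paper's: both invoke the trichotomy for infinite virtually cyclic groups, take the finite normal kernel $F$, observe $V \leqslant N_\G(F)$, and conclude $F=1$ from Hypothesis~(B) since $V$ is infinite.
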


\begin{proof}
By the trichotomy, the group $V$ contains a finite normal subgroup $F$ such that $V/F$ is isomorphic to either $C_\infty$ or $D_\infty$.
If $F\neq 1$, then $V$ is a subgroup of  $N_\G(F)$, contradicting (B).
So $F=1$.
Therefore $V \iso C_\infty $ or $V \iso D_\infty$.
\end{proof}

The maximal infinite dihedral subgroups are self-normalizing, as follows.

\begin{lem}\label{lem:selfnormalizing}
Let $\G$ be a group satisfying Hypothesis~(B).
\begin{enumerate} 
\item If $V \in \vc(\G) - \fbc(\G)$, then $N_\G(V) \in \vc(\G)$.
\item If $\G$ also satisfies Hypothesis~(C\tp ii), then $\G$ satisfies $NM_{\fbc\subset\vc}$.
\end{enumerate}
\end{lem}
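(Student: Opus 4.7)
The plan is to reduce both parts to the case $V \iso D_\infty$ via Lemma~\ref{lem:surprise} and then analyze the conjugation action of $N_\G(V)$ on $V$.

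For part~(1), since $\fin \subseteq \fbc$, we have $V \in \vc(\G) - \fin(\G)$, so Lemma~\ref{lem:surprise} gives $V \iso C_\infty$ or $V \iso D_\infty$; the former is excluded because $C_\infty \in \fbc$, leaving $V \iso D_\infty$. The conjugation homomorphism $\phi \colon N_\G(V) \to \Aut(V)$ has kernel the centralizer $Z_\G(V)$; I would pick any involution $v \in V$ and note $Z_\G(v) \leqslant N_\G(\langle v \rangle)$ is finite by Hypothesis~(B), so $Z_\G(V) \leqslant Z_\G(v)$ is finite. Since $\Aut(D_\infty) \iso D_\infty$ is itself virtually cyclic, so is $\Img(\phi)$, and thus $N_\G(V)$, a finite-by-virtually-cyclic extension, is virtually cyclic.

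For part~(2), I first observe that the elements of $\vc(\G) - \fbc(\G)$ are exactly the infinite dihedral subgroups of $\G$ (again by Lemma~\ref{lem:surprise} combined with $C_\infty \in \fbc$), so the maximal elements of $\vc(\G) - \fbc(\G)$ coincide with the maximal infinite dihedral subgroups. Hypothesis~(C\tp ii) therefore furnishes, for each $V \in \vc(\G) - \fbc(\G)$, a unique maximal element $V_{max} \in \vc(\G) - \fbc(\G)$ with $V \leqslant V_{max}$, and part~(1) gives $N_\G(V_{max}) \in \vc(\G)$. The crucial step is to rule out $N_\G(V_{max}) \in \fbc(\G)$: in any extension $1 \to F \to W \to C_\infty \to 1$ with $F$ finite, torsion elements of $W$ lie in $F$ because $C_\infty$ is torsion-free, so the involutions of any $D_\infty \leqslant W$ all lie in $F$; but $D_\infty$ is generated by its involutions and is infinite, a contradiction. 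Hence $N_\G(V_{max}) \in \vc(\G) - \fbc(\G)$ is itself infinite dihedral, and maximality of $V_{max}$ among infinite dihedral subgroups of $\G$ forces $V_{max} = N_\G(V_{max})$.

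The main obstacle is really this last structural observation that $D_\infty$ cannot embed in any finite-by-cyclic group; without it, $N_\G(V_{max})$ could a priori be a strictly larger type~(II) virtually cyclic group properly containing $V_{max}$. The remaining steps are bookkeeping with Hypothesis~(B), Lemma~\ref{lem:surprise}, and the uniqueness clause of~(C\tp ii).
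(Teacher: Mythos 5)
Your proof is correct and follows essentially the same route as the paper's. Part~(1) is the same computation (the paper centralizes both free-product factors $V_1, V_2 \cong C_2$ rather than a single involution, and describes $\Aut(D_\infty)$ as $(C_2*C_2)\rtimes C_2$ rather than as $D_\infty$ directly, but these are the same group and the argument is identical), and in part~(2) the paper invokes Lemma~\ref{lem:surprise} a second time to conclude that a maximal infinite dihedral subgroup is already maximal virtually cyclic, whereas you give the equivalent, slightly more self-contained observation that $D_\infty$ cannot embed in any finite-by-cyclic group; either way, part~(1) plus maximality then yields self-normalization.
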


\begin{proof}
(1)~By Lemma~\ref{lem:surprise}, $V$ is infinite dihedral.
That is, there exist subgroups $V_1, V_2 \subset V$ such that $V = V_1 * V_2$ and $V_1 \iso C_2 \iso V_2$.
There is an exact sequence
\[\begin{tikzcd}
1 \rar & C_\G(V) \rar & N_\G(V) \rar{p} & \Aut(V)
\end{tikzcd}\]
where $p(\gamma)$ is conjugation by $\gamma$.

First, note $C_\G(V) = C_\G(V_1) \cap C_\G(V_2)$.
By Hypothesis~(B), both $C_\G(V_1)$ and $C_\G(V_2)$ are finite.
Hence $C_\G(V)$ is finite.
Next, note $\Aut(V) = \mathrm{Inn}(V) \rtimes \langle\mathrm{sw}\rangle \cong (C_2 * C_2) \rtimes_{\mathrm{sw}} C_2$, where $\mathrm{sw}$ denotes the switch
automorphism on $V \cong C_2 * C_2$.
Thus, since $\Aut(V)$ contains an infinite cyclic subgroup of index $4$, it is virtually cyclic.
So the image $\mathrm{Im}(p)$ is virtually cyclic.
Therefore, since the kernel $\mathrm{Ker}(p) = C_\G(V)$ is finite, the normalizer $N_\G(V)$ is virtually cyclic.

\noindent (2)~Any $V \in \vc(\G) - \fbc(\G)$ is infinite dihedral by part (1).  By Hypothesis~(C\tp ii), $V$ is contained in a unique infinite dihedral group, which we will call $V_{max}$.  By Lemma~\ref{lem:surprise}, $V_{max}$ is also a maximal virtually cyclic subgroup Thus by part (1), it self-normalizing.   Hence $NM_{\fbc\subset\vc}$ holds.
\end{proof}

Observe that Property~$NM_{1 \subset \fin}$ implies Hypothesis~(B).
A partial converse is:
\begin{cor}\label{cor:NM}
Suppose $\G$ satisfies Hypotheses~(AB).
If $\mcS(\G)$ is nonempty, then $\G$ satisfies Property~$NM_{1 \subset\fin}$.
\end{cor}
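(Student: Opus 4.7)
The plan is to extract the result directly from Proposition~\ref{prop:onepoint}, which says that for any $[M,\G]\in\mcS(\G)$ satisfying (AB), the fixed set $M^H$ is a single point for every nontrivial finite subgroup $H\leqslant\G$. So I would begin by picking $[M,\G]\in\mcS(\G)$ using the nonemptiness hypothesis, and for each nontrivial finite subgroup $V$ of $\G$, set $\{p\}:=M^V$ and define $V_{max}:=\G_p$, the isotropy at~$p$. By properness $V_{max}$ is finite, and it clearly contains $V$, so $V_{max}\in\fin(\G)-\{1\}$.

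Next I would verify that $V_{max}$ is maximal in $\fin(\G)-\{1\}$. If $W\in\fin(\G)-\{1\}$ with $V_{max}\subseteq W$, then $M^W\subseteq M^{V_{max}}$; but $M^{V_{max}}$ is a singleton by Proposition~\ref{prop:onepoint}, namely $\{p\}$, so $W$ fixes~$p$ and hence $W\subseteq\G_p=V_{max}$. For uniqueness, suppose $W\supseteq V$ is any maximal element of $\fin(\G)-\{1\}$. Write $\{q\}=M^W$; then $W\subseteq\G_q$, and maximality forces $W=\G_q$. Since $V\subseteq W$ fixes~$q$ and $M^V=\{p\}$, we get $q=p$, whence $W=\G_p=V_{max}$.

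Finally I would check the self-normalization $N_\G(V_{max})=V_{max}$. The inclusion $\supseteq$ is trivial. Conversely, for $\gamma\in N_\G(V_{max})$, the equation $\gamma V_{max}\gamma^{-1}=V_{max}$ implies that $\gamma p$ is fixed by $V_{max}$; but $M^{V_{max}}=\{p\}$, so $\gamma p=p$, i.e.\ $\gamma\in\G_p=V_{max}$.

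There is no real obstacle here: Proposition~\ref{prop:onepoint} has already done the geometric work of converting Hypothesis~(B) (finiteness of normalizers of nontrivial finite subgroups) into the geometric fact that each $M^H$ is a single point, and everything reduces to elementary bookkeeping with isotropy groups and fixed-point sets. The only thing to be careful about is to quote Proposition~\ref{prop:onepoint} both for $V$ and for every candidate maximal finite group $W$, since the singleton conclusion is what simultaneously pins down maximality, uniqueness, and self-normalization.
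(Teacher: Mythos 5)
Your proof is correct and follows essentially the same route as the paper: both extract everything from Proposition~\ref{prop:onepoint} by defining $V_{max}$ to be the isotropy group of the unique fixed point of $V$, and then deduce maximality, uniqueness, and self-normalization from the singleton fixed-set property. The one small divergence is in the self-normalization step: the paper invokes Hypothesis~(B) to conclude $N_\G(H_{max})$ is finite and then applies Proposition~\ref{prop:onepoint} once more, whereas you use the elementary orbit argument that $\gamma p$ is fixed by $V_{max}$ whenever $\gamma$ normalizes $V_{max}$, forcing $\gamma p = p$; your version is marginally more self-contained and avoids a second explicit appeal to (B), but both arguments have the same content since they rest on the same singleton conclusion.
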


\begin{proof}
There exists a contractible manifold $M$ equipped with a cocompact proper $\G$-action.
Let $H \in \fin(\G) - 1$.
By Proposition~\ref{prop:onepoint}, the fixed set $M^H$ is a single point, say $\{x\}$.
Since the action $\G \curvearrowright M$ is proper, the isotropy group $H_{max} := \G_x$ is finite.
Let $K \in \fin(\G)$ contain $H$.
By Proposition~\ref{prop:onepoint} again, $M^K \subseteq M^H$ is non-empty.
We must have $M^K = \{x\}$. Hence $K \subseteq H_{max}$.
Thus $\G$ satisfies $M_{1\subset\fin}$.

By Hypothesis~(B), $N_\G(H_{max}) \in \fin(\G)$.  By Proposition~\ref{prop:onepoint}, it fixes a single point, which is $\{x\}$.   Hence $N_\G(H_{max}) = H_{max}$.  Thus $\G$ satisfies $NM_{1 \subset\fin}$.
\end{proof}


For any group $G$, ring $R$, and integer $q$, recall the generalized Whitehead groups
\[
\Wh^R_q(G) ~:=~ H^G_q(E_\all G ,E_{1} G; \ul{\bK}_R).
\]

A direct sum decomposition of Whitehead groups is in \cite[Theorem~5.1(d)]{DL2}.

\begin{thm}\label{thm:sumWh}
Let $\G$ be a group satisfying Property~$NM_{1 \subset \fin}$ and the Farrell--Jones Conjecture in algebraic $K$-theory.
Then, for each $q \in \Z$, the inclusion-induced map is an isomorphism:
\[
\begin{tikzcd}\displaystyle\bigoplus_{(F) \in (\mfin)(\G)} \Wh^\Z_q(F)  \rar & \Wh^\Z_q(\G).\end{tikzcd}
\]
Here $(\mfin)(\G)$ is the set of conjugacy classes of maximal finite subgroups of $\G$.
\end{thm}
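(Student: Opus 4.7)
The plan is to combine the $K$-theoretic Farrell--Jones Conjecture with an induction-theoretic splitting due to Davis--Lück~\cite[Theorem~5.1]{DL2}, working through the filtration $E_1\G \subset E_\fin\G \subset E_\vc\G \subset E_\all\G$. First, Farrell--Jones identifies $\Wh^\Z_q(\G) = H^\G_q(E_\all\G, E_1\G; \ul{\bK}_\Z)$ with $H^\G_q(E_\vc \G, E_1 \G; \ul{\bK}_\Z)$. The long exact sequence of the triple $(E_1\G, E_\fin\G, E_\vc\G)$ then reduces the theorem to two assertions: (i)~$H^\G_*(E_\vc \G, E_\fin \G; \ul{\bK}_\Z) = 0$; and (ii)~the inclusions induce an isomorphism $\bigoplus_{(F) \in (\mfin)(\G)} \Wh^\Z_q(F) \iso H^\G_q(E_\fin \G, E_1 \G; \ul{\bK}_\Z)$.

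For (ii), Property $NM_{1\subset\fin}$ supplies a L\"uck--Weiermann style model for $E_\fin\G$ as the $\G$-pushout of
\[
E_1\G \longleftarrow \coprod_{(F)} \G\times_F EF \longrightarrow \coprod_{(F)} \G/F,
\]
where $F$ ranges over representatives of $(\mfin)(\G)$ and the self-normalization $N_\G(F)=F$ is exactly what makes $\G\times_F EF$ embed as a free $\G$-subspace of $E_1\G$. Applying $H^\G_*(-;\ul{\bK}_\Z)$, the associated Mayer--Vietoris sequence combined with the induction isomorphism $H^\G_*(\G\times_F Y; \ul{\bK}_\Z) \iso H^F_*(Y; \ul{\bK}_\Z)$ collapses the relative group to $\bigoplus_{(F)} H^F_q(\pt, EF; \ul{\bK}_\Z) = \bigoplus_{(F)} \Wh^\Z_q(F)$.

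For (i), first observe that $NM_{1\subset\fin}$ implies Hypothesis~(B): uniqueness of $F_{max}\supseteq F$ forces $N_\G(F) \subseteq N_\G(F_{max}) = F_{max}$, which is finite. Hence by the argument of Lemma~\ref{lem:surprise}, every $V \in \vc(\G) - \fin(\G)$ is isomorphic to $C_\infty$ or $D_\infty$. Now by the transitivity principle for the Farrell--Jones assembly, $H^\G_*(E_\vc\G, E_\fin\G; \ul{\bK}_\Z) = 0$ will follow once the local relative assembly $H^V_*(E_\fin V; \ul{\bK}_\Z) \to K_*(\Z V)$ is shown to be an isomorphism for every $V \in \vc(\G)$. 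For $V$ finite this is trivial; for $V \iso C_\infty$ the cofiber is $NK_*(\Z) \oplus NK_*(\Z)$, which vanishes by regularity of $\Z$ (Quillen); and for $V \iso D_\infty$ the cofiber is the Waldhausen Nil group $\widetilde{\mathrm{Nil}}_*(\Z;\Z,\Z)$ for the amalgamated decomposition $D_\infty = C_2 *_1 C_2$, and again vanishes because $\Z$ is regular coherent (Waldhausen's theorem on $K$-theory of generalized free products). This simultaneously establishes Proposition~\ref{prop:sumWh}.

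The main obstacle is assertion (i), specifically the $D_\infty$ case: the $K$-theoretic Waldhausen Nil must be killed via regular-coherence of $\Z$, in sharp contrast to the nonvanishing $L$-theoretic $\UNil_*(\Z;\Z,\Z)$ which ultimately drives the nontriviality of $\mcS(\G)$ in Theorem~\ref{thm:main}.
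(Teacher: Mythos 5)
Your proof is correct, and it takes a genuinely different route from the paper: the paper gives no argument at all for this theorem but instead cites it directly from Davis--L\"uck \cite[Theorem~5.1(d)]{DL2}, where it is established via the $p$-chain spectral sequence. You rebuild the splitting from more elementary principles. Farrell--Jones and the long exact sequence of the triple $(E_1\G, E_\fin\G, E_\vc\G)$ reduce the theorem to (i) the vanishing of $H^\G_*(E_\vc\G, E_\fin\G; \ul{\bK}_\Z)$ --- which is precisely Proposition~\ref{prop:sumWh} --- and (ii) the excision/Mayer--Vietoris identification of $H^\G_q(E_\fin\G, E_1\G; \ul{\bK}_\Z)$ with $\bigoplus_{(F)}\Wh^\Z_q(F)$, for which you use the $\G$-pushout model of $E_\fin\G$ made available by $NM_{1\subset\fin}$. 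For (i), you correctly observe that $NM_{1\subset\fin}$ implies Hypothesis~(B), invoke Lemma~\ref{lem:surprise}, and then apply the transitivity principle together with the vanishing of $NK_*(\Z)$ (Quillen/Bass--Heller--Swan, for $C_\infty$) and of the Waldhausen $\widetilde{\mathrm{Nil}}_*(\Z;\Z,\Z)$ over the regular ring $\Z$ (for $D_\infty$; alternatively one could quote the general vanishing $H_*^V(E_\vc V, E_\fbc V;\ul{\bK}_R)=0$ from \cite{dqr,dkr} and handle the $\fbc$-to-$\fin$ step via $C_\infty$). Your approach buys a transparent, self-contained argument that pinpoints exactly where regularity of $\Z$ enters in $K$-theory (and, by contrast, why the $L$-theoretic $\UNil$ survives), at the cost of treating the L\"uck--Weiermann pushout, transitivity, and Waldhausen's Nil theorem as black boxes. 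One small imprecision: the self-normalization $N_\G(F)=F$ is needed not so that $\G\times_F EF$ ``embeds as a free $\G$-subspace of $E_1\G$,'' but so that for each nontrivial finite $H \leqslant \G$ the $H$-fixed set of $\coprod_{(F)}\G/F$ is a \emph{single} point --- this is what makes the pushout a model for $E_\fin\G$; the slip does not affect the correctness of the argument.
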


For any connected space $Z$, we define $\Wh_*(Z) := \Wh^\Z_*(\pi_1 Z)$.

\begin{cor}\label{cor:sumWh}
Let $\G$ be a group satisfying Hypotheses~(AB) and the Farrell--Jones Conjecture in algebraic K-theory.
Suppose $X$ is a contractible manifold of dimension $\geqslant 3$ equipped with an effective cocompact proper $\G$-action.
Then, for each $q \in \Z$, the inclusion-induced map is an isomorphism:
\[
A_q : \begin{tikzcd}\displaystyle\bigoplus_{j=1}^m \Wh_q(\bdry_j\ol{X}) \rar & \Wh_q(\ol{X}).\end{tikzcd}
\]
\end{cor}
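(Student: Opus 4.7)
The plan is to deduce Corollary~\ref{cor:sumWh} from Theorem~\ref{thm:sumWh} by explicitly identifying the two sides of the isomorphism $A_q$ with the two sides appearing in Theorem~\ref{thm:sumWh}. The hypotheses of Theorem~\ref{thm:sumWh} are the Farrell--Jones Conjecture in $K$-theory, which is assumed, and Property~$NM_{1\subset\fin}$. Since $X$ is a contractible manifold equipped with an effective cocompact proper $\G$-action, $[X,\G]\in\mcS(\G)$ is nonempty, so by Corollary~\ref{cor:NM} the group $\G$ satisfies $NM_{1\subset\fin}$. Thus Theorem~\ref{thm:sumWh} applies and yields
\[
\bigoplus_{(F)\in(\mfin)(\G)} \Wh^\Z_q(F) \;\xrightarrow{\iso}\; \Wh^\Z_q(\G).
\]

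Next I would identify the fundamental groups. Since $X$ is a contractible $\G$-manifold with $\pi_1(X_{free}/\G) \iso \G$, and $\bdry\ol{X}$ is cocollared away from a set of codimension at least two, the inclusion $X_{free}/\G \hookrightarrow \ol{X}$ is $\pi_1$-isomorphic, so $\pi_1(\ol X)\iso \G$. Writing $p_1,\dots,p_m$ for chosen representatives of the $m$ orbits of $M_{sing}$ and $\G_j:=\G_{p_j}$ for their (finite, by properness) isotropy groups, the computation in Lemma~\ref{lem:mwb} exhibits each $\bdry_j\ol X$ as $S^{n-1}(p_j)/\G_j$ whose universal cover is $S^{n-1}$, so $\pi_1(\bdry_j\ol X)\iso \G_j$. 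Consequently $\Wh_q(\ol X)=\Wh^\Z_q(\G)$ and $\Wh_q(\bdry_j\ol X)=\Wh^\Z_q(\G_j)$, and the map $A_q$ is, under these identifications, induced by the subgroup inclusions $\G_j\hookrightarrow\G$ (up to conjugation by choice of paths).

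It remains to show that $j\mapsto [\G_j]$ is a bijection onto $(\mfin)(\G)$. For maximality, if $\G_j\subseteq H\in\fin(\G)$, then by Proposition~\ref{prop:onepoint} the fixed set $X^H$ is a single point contained in $X^{\G_j}=\{p_j\}$, forcing $H\subseteq\G_{p_j}=\G_j$. For surjectivity, any nontrivial maximal $F\in\mfin(\G)$ fixes a unique point $x$ (Proposition~\ref{prop:onepoint}), and maximality together with properness gives $F=\G_x$; since $x\in M_{sing}$ lies in some orbit $\G p_j$, we have $F$ conjugate to $\G_j$. Injectivity follows because if $\G_{j'}=\gamma\G_j\gamma^{-1}=\G_{\gamma p_j}$, then $\gamma p_j$ is the unique fixed point of $\G_{j'}$, namely $p_{j'}$, so $j=j'$. (The potential exceptional case in which $\mfin(\G)$ is conventionally taken to contain the trivial subgroup occurs only when $\G$ is torsion-free, and then $M_{sing}=\varnothing$, $m=0$, and both sides of $A_q$ vanish by FJC.) Identifying the sum decomposition of Theorem~\ref{thm:sumWh} with $A_q$ via this bijection completes the argument. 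The main obstacle is precisely the fixed-point and maximality bookkeeping for the bijection $\{1,\dots,m\}\leftrightarrow(\mfin)(\G)$, but this reduces cleanly to Proposition~\ref{prop:onepoint}.
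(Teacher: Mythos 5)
Your proof follows essentially the same approach as the paper: invoke Corollary~\ref{cor:NM} to get $NM_{1\subset\fin}$ (using nonemptiness of $\mcS(\G)$), apply Theorem~\ref{thm:sumWh}, identify $\pi_1(\ol X)\iso\G$ and $\pi_1(\bdry_j\ol X)\iso\G_j$, and match boundary components with conjugacy classes of maximal finite subgroups via Proposition~\ref{prop:onepoint}. Your bijection bookkeeping (maximality, surjectivity, injectivity, and the torsion-free edge case) is actually more explicit than the paper's, which simply asserts the bijectivity of the analogous map $\alpha: X_{sing}/\G \to (\mfin)(\G)$.

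One small caution: you cite Lemma~\ref{lem:mwb} to identify $\pi_1(\bdry_j\ol X)\iso \G_j$. That lemma requires $\dim X\geqslant 5$ and Hypotheses~(ABC\tp), whereas the present Corollary only assumes $\dim X\geqslant 3$ and (AB) + FJC in $K$-theory; moreover, the proof of Lemma~\ref{lem:mwb} itself invokes this Corollary, so a careless reading could suggest circularity. The paper avoids this by instead using the $\G$-map $e:\wt{\bdry W}\to M_{sing}$ from the proof of Proposition~\ref{prop:GCWstructure} (display~\eqref{e_map}) to induce the bijection $\beta:\{\pi_1\bdry_i\ol X\}\to X_{sing}/\G$ compatibly with inclusions into $\G$; this route needs neither $\dim\geqslant 5$ nor (C\tp). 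Of course, the identification you need — that each boundary component has fundamental group the isotropy group of the corresponding singular orbit and maps into $\G$ by inclusion — is a purely end-theoretic fact and does not actually rest on the injectivity being proved, so the argument is not truly circular, but it would be cleaner to source the $\pi_1$-identification from \eqref{e_map} rather than Lemma~\ref{lem:mwb}.
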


\begin{proof}
Since $\G$ satisfies Hypotheses~(AB), by Corollary~\ref{cor:NM}, $\G$ satisfies $NM_{1\subset\fin}$.
By Theorem~\ref{thm:sumWh}, the induced map is an isomorphism:
\[
\begin{tikzcd}\displaystyle\bigoplus_{(F) \in (\mfin)(\G)} \Wh^\Z_q(F) \rar{\iso} & \Wh^\Z_q(\G).\end{tikzcd}
\]
Since $X$ is a $\G$-space so that for $1 \not = H < \G$, $X^H$ is empty for $H$ infinite and $X^H$ is a point for $H$ finite, the following function is a bijection:
\[
\alpha: X_{sing}/\G \longrightarrow (\mfin)(\G) ~;~ q \longmapsto \{\G_p \mid \G p = q\}.
\]
Since $\dim X \geqslant 3$, there are isomorphisms of  fundamental groups
\[\begin{tikzcd}
\pi_1 \ol{X} & \arrow{l}[swap]{\cong} \pi_1(X_{free}/\G) \rar{\cong} & \G.
\end{tikzcd}\]
Finally, the map $e$ from \eqref{e_map} induces a bijection
\[
\beta: \{ \pi_1 \partial_i \ol X\}_{i=1}^m \longrightarrow X_{sing}/\G
\]
so that the bijection $\alpha \circ \beta$ is given  on representatives by group isomorphisms which are compatible with the inclusion maps to $\G$.
\end{proof}

\begin{cor}\label{cor:h_inf}
Let $(X,\G)$ be as above.
The forgetful map is an isomorphism:
\[
\begin{tikzcd}\mcS_*^{per,h}(\ol{X}, \bdry\ol{X}) \rar & \mcS_*^{per,\infdec}(\ol{X}, \bdry\ol{X}).\end{tikzcd}
\]
\end{cor}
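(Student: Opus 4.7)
The plan is to compare the two non-connective algebraic surgery exact sequences of the pair
\[
\cdots \to H_*(\ol{X}, \bdry\ol{X}; \bL) \xrightarrow{\alpha} L^{q}_*(\ol{X}, \bdry\ol{X}) \to \mcS^{per, q}_*(\ol{X}, \bdry\ol{X}) \to H_{*-1}(\ol{X}, \bdry\ol{X}; \bL) \to \cdots
\]
for the two decorations $q = h$ and $q = \infdec$, linked by the forgetful natural transformation. Since $\bL$ is the $4$-periodic $L$-spectrum of the trivial group, the homology terms are identical in the two rows. Consequently, by the five-lemma, it suffices to show that the induced forgetful map $L^h_*(\ol{X}, \bdry\ol{X}) \to L^{\infdec}_*(\ol{X}, \bdry\ol{X})$ is an isomorphism.

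To prove this, I would interpolate through the Ranicki tower of intermediate decorations
\[
L^h_* = L^{\langle 1 \rangle}_* \longra L^{\langle 0 \rangle}_* \longra L^{\langle -1 \rangle}_* \longra \cdots \longra L^{\infdec}_* = \colim_i L^{\langle -i \rangle}_*.
\]
Between consecutive levels, Ranicki's Rothenberg-style long exact sequence expresses the comparison of $L^{\langle 1-i\rangle}_*$ with $L^{\langle -i\rangle}_*$ in terms of the Tate cohomology $\wh{H}^*(\Z/2; \wt K_{1-i}(\Z[\,\cdot\,]))$ of the appropriate $K$-theory, with the $\Z/2$-action coming from the orientation involution. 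I would apply this comparison both to the absolute groups of $\ol{X}$ and to each boundary component $\bdry_j \ol{X}$, and then splice the two instances into the long exact sequence of the pair at every level of the tower.

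The key vanishing input is Corollary~\ref{cor:sumWh}: the inclusion-induced map $\bigoplus_j \Wh^\Z_q(\bdry_j \ol{X}) \to \Wh^\Z_q(\ol{X})$ is an isomorphism for every $q \in \Z$. Hence the relative generalized Whitehead groups $\Wh^\Z_q(\ol{X}, \bdry\ol{X})$ all vanish, so all the relative Tate cohomology obstruction terms vanish too, and each forgetful map $L^{\langle 1-i\rangle}_*(\ol{X}, \bdry\ol{X}) \to L^{\langle -i\rangle}_*(\ol{X}, \bdry\ol{X})$ is an isomorphism. Since every comparison map in the tower is already an isomorphism, no $\lim^1$ issues arise, and passing to the colimit over $i$ delivers the desired isomorphism $L^h_* \iso L^{\infdec}_*$ of relative $L$-groups. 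The main obstacle I anticipate is correctly setting up the Rothenberg comparison on a pair with disconnected boundary and distinct fundamental groups $\G, \G_1, \ldots, \G_m$, and this is exactly what Corollary~\ref{cor:sumWh} is designed to handle.
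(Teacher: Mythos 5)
Your proposal is correct and follows essentially the same route as the paper: compare the $\mathcal{S}^{per,\langle q\rangle}$ exact sequences along the Ranicki tower of decorations using the Rothenberg sequence, feed in the isomorphism $A_q$ from Corollary~\ref{cor:sumWh} to kill the relative Tate obstruction terms, apply the five lemma, and pass to the colimit. The only cosmetic difference is that the paper applies the five lemma once per rung $\langle q\rangle\to\langle q-1\rangle$ of the tower, whereas you first establish $L^{\langle q\rangle}_*(\ol{X},\bdry\ol{X})\to L^{\langle q-1\rangle}_*(\ol{X},\bdry\ol{X})$ is an isomorphism for all $q$ and then invoke the five lemma a single time at the end; both organizations are equivalent. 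One small imprecision: the ``relative Tate cohomology'' term $\wh{H}^*(C_2;A_{q-1})$ in Ranicki's exact sequence is a mapping-cone--type group attached to the morphism $A_{q-1}$, not literally $\wh{H}^*(C_2;\Wh_q(\ol{X},\bdry\ol{X}))$; but since $A_{q-1}$ is an isomorphism this term vanishes either way, so your conclusion stands.
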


\begin{proof}
For each $q \leqslant 1$, there is a commutative diagram with exact rows and $\langle 1 \rangle = h$:
\[\begin{tikzcd}
\cdots \rar & \mcS_{*+1}^{per,\langle q \rangle}(\ol{X}, \bdry\ol{X}) \rar \dar & H_*(\ol{X}, \bdry\ol{X}; \bL) \rar \dar{=} & L^{\langle q \rangle}_*(\ol{X}, \bdry\ol{X}) \rar \dar & \cdots\\
\cdots \rar & \mcS_{*+1}^{per,\langle q-1 \rangle}(\ol{X}, \bdry\ol{X}) \rar & H_*(\ol{X}, \bdry\ol{X}; \bL) \rar & L^{\langle q-1 \rangle}_*(\ol{X}, \bdry\ol{X}) \rar & \cdots
\end{tikzcd}\]
By \cite[Proposition~2.5.1]{Ranicki_ESATS}, there is a diagram with exact rows and columns:
\[\small\begin{tikzcd}
{} & {} \dar & {} \dar & {} \dar & {}\\
\cdots \rar & \displaystyle\bigoplus_{j=1}^m L^{\langle q\rangle}_*(\bdry_j\ol{X}) \rar \dar & L^{\langle q\rangle}_*(\ol{X}) \rar \dar & L^{\langle q\rangle}_*(\ol{X},\bdry\ol{X}) \rar \arrow{d}{f_q} & \cdots\\
\cdots \rar & \displaystyle\bigoplus_{j=1}^m L^{\langle {q-1}\rangle}_*(\bdry_j\ol{X}) \rar \dar & L^{\langle {q-1}\rangle}_*(\ol{X}) \rar \dar & L^{\langle {q-1}\rangle}_*(\ol{X},\bdry\ol{X}) \rar \dar & \cdots\\
\cdots \rar & \displaystyle\bigoplus_{j=1}^m \wh{H}^*(C_2; \Wh_{q-1}(\bdry_j\ol{X})) \rar{(A_{q-1})*} \dar & \wh{H}^*(C_2; \Wh_{q-1}(\ol{X})) \rar \dar & \wh{H}^*(C_2; A_{q-1}) \rar \dar & \cdots\\
{} & {} & {} & {} & {}
\end{tikzcd}\]
Here $A_{q-1}$ is the isomorphism of Corollary~\ref{cor:sumWh}
and $\wh{H}^*(C_2;A_{q-1})$ is a certain group defined by Ranicki \cite[p.~166]{Ranicki_ESATS}, which must vanish by exactness.
Then, via the commutative diagram, $f_q$ is an isomorphism.
So, by the Five Lemma, $\mcS_{*}^{per,\langle q \rangle}(\ol{X}, \bdry\ol{X}) \to \mcS_{*}^{per,\langle q-1 \rangle}(\ol{X}, \bdry\ol{X})$ is an isomorphism.  The result follows, since
\[
\mcS_{*}^{per,\infdec}(\ol{X}, \bdry\ol{X}) ~=~ \colim_{q \to -\infty} \mcS_{*}^{per,\langle q \rangle}(\ol{X}, \bdry\ol{X}).
\qedhere
\]
\end{proof}

\section{Calculation of the $-\infty$ structure groups}\label{sec:structures}

For a group $\G$ with orientation character $w : \G  \to \{\pm 1\}$, let $\ul{\bL} := \ul{\bL}_\Z^\infdec :\Or(\G) \to \Spectra$ be the  Davis--L\"uck functor \cite{DL1}.  This defines a $\G$-homology theory which assigns an abelian group $H^\G_n(A,B; \ul\bL)$ to a pair of $\G$-spaces and an integer.  The ``coefficients" are given by $H^\G_n(\G/H,\varnothing; \bL) = \pi_n\ul{\bL} (\G/H) \cong L^{-\infty}_n(\Z H,w)$.

Our goal in this section is, for a group $\G$ satisfying Hypotheses (ABC\tp), to identify $\mcS^{per,\infdec}_{n+1}(\ol{X}, \bdry\ol{X})$ with $H^\G_*(E_\all \G, E_\fin\G; \ul\bL)$ and to compute this in terms of UNil-groups.
As a byproduct of the computation we will see that the map $\mcS^\TOP(\ol{X},\bdry\ol{X}) \cong \mcS^h_{n+1}(\ol X,\bdry\ol{X}) \to \mcS^{per,h}_{n+1}(\ol{X},\bdry\ol{X}) \cong \mcS^{per,\infdec}_{n+1}(\ol{X},\bdry\ol{X})$ is a bijection, as promised in Remark~\ref{rem:TSO}.
This will complete the proof of Theorem \ref{thm:main}.

\begin{lem}\label{lem:split}
Suppose $\G$ satisfy Hypothesis (ABC\tp) with $\dim X \geqslant 5$.   Let $\ol{X}$ be a compact $\partial$-manifold with interior $X_{free}/\G$.   
There is a commutative diagram with long exact rows and vertical isomorphisms:
\[\small\begin{tikzcd}
H_*^\G(X,X_{free}; \ul{\bL}) \rar \dar &
H_*^\G(cX, X_{free};\ul{\bL}) \rar \dar &
H_*^\G(cX, X; \ul{\bL}) \rar \dar &
H_{*-1}^\G(X,X_{free}; \ul{\bL}) \dar\\
\mcS^{per,\infdec}_*(\bdry\ol{X}) \rar &
\mcS^{per,\infdec}_*(\ol{X}) \rar &
\mcS^{per,\infdec}_*(\ol{X},\bdry\ol{X}) \rar &
\mcS^{per,\infdec}_{*-1}(\bdry\ol{X}).
\end{tikzcd}\]
Thus $\mcS^{per,\infdec}_*(\ol{X},\bdry\ol{X}) \cong H^\G_*(E_\all \G, E_\fin\G; \ul\bL)$.  
\end{lem}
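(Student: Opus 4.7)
My plan is to interpret the top row as the long exact sequence of the $\G$-triple $X_{free}\subset X\subset cX$ for the Davis--L\"uck equivariant homology $H^\G_*(-;\ul\bL)$, and the bottom row as Ranicki's structure-group long exact sequence for the compact manifold pair $(\ol X,\bdry\ol X)$. I shall construct each vertical map by comparing a $\G$-equivariant pair sequence on top with the appropriate Ranicki assembly sequence on the bottom, concluding via the five lemma.

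For the middle vertical, I first observe that $cX$ is $\G$-contractible (the cone point is $\G$-fixed), hence $H^\G_*(cX;\ul\bL)\cong L^{\infdec}_*(\Z\G,w)$; and $X_{free}$ is a free $\G$-space with quotient $\Int\ol X \simeq \ol X$, where $\pi_1\ol X\cong\G$, so $H^\G_*(X_{free};\ul\bL)\cong H_*(\ol X;\bL)$. The induced connecting map of the $\G$-pair $(cX,X_{free})$ is identified with Ranicki's assembly map $\alpha\colon H_*(\ol X;\bL)\to L^{\infdec}_*(\Z\G)$ via the standard equivalence between the Davis--L\"uck and classical assembly maps. Comparison with the Ranicki long exact sequence
\[
\cdots \to H_*(\ol X;\bL) \xrightarrow{\alpha} L^{\infdec}_*(\Z\G) \to \mcS^{per,\infdec}_*(\ol X) \to H_{*-1}(\ol X;\bL) \to \cdots
\]
and the five lemma then produce the middle vertical isomorphism.

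For the leftmost (and, up to degree shift, rightmost) vertical, I will invoke Lemma~\ref{lem:mwb} to pick a $\G$-tubular neighborhood $U\cong \bigsqcup_{j=1}^m \G\times_{\G_j} U_j$ of $X_{sing}$, with each $U_j$ an open cone on the link $\tN_j\cong S^{n-1}$. By $\G$-excision and the induction formula $H^\G_*(\G\times_{\G_j}-;\ul\bL)\cong H^{\G_j}_*(-;\ul\bL)$,
\[
H^\G_*(X,X_{free};\ul\bL) \cong \bigoplus_{j=1}^m H^{\G_j}_*(U_j,U_j\setminus\{p_j\};\ul\bL).
\]
Each $U_j$ is $\G_j$-contractible and contributes $L^{\infdec}_*(\Z\G_j)$; pseudo-freeness forces $\G_j$ to act freely on $\tN_j\simeq U_j\setminus\{p_j\}$ with quotient $\bdry_j\ol X$, contributing $H_*(\bdry_j\ol X;\bL)$. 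Comparing summand-wise with the Ranicki sequence for $\bdry_j\ol X$ (note $\pi_1\bdry_j\ol X\cong\G_j$) and summing over~$j$ identifies this group with $\mcS^{per,\infdec}_*(\bdry\ol X)=\bigoplus_j \mcS^{per,\infdec}_*(\bdry_j\ol X)$.

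The right vertical is then forced to be an isomorphism by the five lemma applied to the full commutative ladder. For the concluding assertion, $X$ is a model for $E_\fin\G$ by Corollary~\ref{cor:phi_bijects}, and $cX$ is a $\G$-CW complex that is $\G$-contractible, hence a model for $E_\all\G$, so
\[
H^\G_*(cX,X;\ul\bL)\cong H^\G_*(E_\all\G,E_\fin\G;\ul\bL).
\]
The main technical obstacle is verifying that the Davis--L\"uck assembly maps on top and Ranicki's classical assembly maps on the bottom coincide under these identifications so that the ladder genuinely commutes; this agreement is standard but requires care, particularly in piecing together the excision-based local identifications near $X_{sing}$ with the global middle-column identification.
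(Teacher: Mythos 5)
Your plan is essentially the approach the paper takes, which simply defers to \cite[Lemma~4.2]{CDK}: compare the Davis--L\"uck $\ul{\bL}$-homology long exact sequence of the $\G$-triple $(cX, X, X_{free})$ with Ranicki's structure-group exact sequence for $(\ol{X}, \bdry\ol{X})$, identify the inclusion-induced maps with Ranicki assembly maps (for $\ol{X}$ in the middle column and, after $\G$-excision near $X_{sing}$, for $\bigsqcup_j \bdry_j\ol{X}$ in the left column), and conclude with the five lemma. One caution: to invoke the five lemma for the right vertical you must first \emph{construct} that arrow and verify the squares commute, which is cleanest if the ladder is built as a map of cofiber sequences of spectra rather than only of long exact sequences of groups --- this automatically supplies the induced map on cofibers and at the same time resolves the compatibility between your local (excision-based) and global (middle-column) assembly-map identifications that you flag as the main technical obstacle.
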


\begin{proof}
The argument is closely analogous to that of \cite[Lemma~4.2]{CDK}.
\end{proof}

The next lemma allows us to simplify our families.

\begin{lem}\label{lem:change_of_family} 
Let $\G$ be a group.
\begin{enumerate}

\item \label{lem:fbcfin} $H_*^\G(E_\fbc \G, E_\fin \G; \ul{\bL}) = 0$.

\item \label{lem:allvc}  $H_*^\G(E_\all \G, E_\vc \G; \ul{\bL}) = 0$ if the Farrell--Jones Conjecture  in $L$-theory holds for $\G$.

\end{enumerate}
\end{lem}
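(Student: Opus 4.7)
The plan is to deduce both parts from the Bartels--L\"uck transitivity principle together with the Farrell--Jones Conjecture in $L$-theory.

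Part~(\ref{lem:allvc}) is essentially tautological. Hypothesis~(C\tp iii) asserts that the assembly map
\[
H_*^\G(E_\vc\G; \ul\bL) \longrightarrow H_*^\G(\pt; \ul\bL) \iso L_*^\infdec(\Z\G, w)
\]
is an isomorphism. Since $E_\all\G$ is contractible, the right-hand side equals $H_*^\G(E_\all\G; \ul\bL)$, so the long exact sequence of the pair $(E_\all\G, E_\vc\G)$ immediately yields the desired vanishing.

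For part~(\ref{lem:fbcfin}), the plan is to apply the transitivity principle to the inclusion of families $\fin \subseteq \fbc$. It reduces the claim to showing, for every $V \in \fbc(\G)$, that the local assembly map
\[
H_*^V(E_\fin V; \ul\bL) \longrightarrow H_*^V(\pt; \ul\bL) \iso L_*^\infdec(\Z V, w)
\]
is an isomorphism. When $V$ is finite, $E_\fin V$ is a point and there is nothing to prove. When $V$ is of type~(II), the extension $1 \to F \to V \to C_\infty \to 1$ splits since $C_\infty$ is free, so $V \iso F \rtimes C_\infty$, and $E_\fin V$ admits a one-dimensional $V$-CW model, namely $\R$ with $F$ acting trivially and $C_\infty$ by translation. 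A Mayer--Vietoris / Wang sequence then identifies the left-hand side with the classical Farrell--Hsiang / Shaneson semidirect-product decomposition of $L_*^\infdec(\Z V)$; equivalently, Cappell's Nil obstructions vanish for type~(II) virtually cyclic groups.

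The main obstacle is not the formal argument but rather citing the correct references: the transitivity principle in the form needed is available in the work of Bartels--L\"uck on the Farrell--Jones Conjecture, and the type~(II) computation is classical but scattered through Farrell--Hsiang, Cappell, and the subsequent FJC literature. One must also be careful to maintain the $-\infty$ decoration throughout, so that the local $L$-groups match the homology theory $\ul\bL = \ul\bL_\Z^\infdec$ used in this section. Once these points are settled, both statements follow formally.
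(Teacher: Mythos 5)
Your treatment of part~(\ref{lem:allvc}) is correct and coincides with the paper's (both just observe it is a restatement of the Farrell--Jones Conjecture via the long exact sequence of the pair $(E_\all\G, E_\vc\G)$).

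For part~(\ref{lem:fbcfin}) you take a genuinely more general route than the paper. The paper does \emph{not} prove the lemma in the generality stated: it explicitly proves it ``in the case where $\G$ satisfies Hypothesis~(B),'' precisely so that (by Lemma~\ref{lem:surprise}) the only type~(II) subgroups are isomorphic to $C_\infty$ with trivial twist. After the transitivity reduction, the paper therefore needs only the untwisted case, which it handles by a geometric trick: identify the Davis--L\"uck assembly map with the Quinn--Ranicki one (using \cite[Theorem~B.1]{CDK} / Hambleton--Pedersen), then show $\mcS^{per}_*(S^1) = 0$ by combining Ranicki's total surgery obstruction with the Farrell--Hsiang rigidity theorem for $S^1\times D^n$. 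You instead tackle the full statement for arbitrary $V = F\rtimes C_\infty$ of type~(II), using a $1$-dimensional model for $E_\fin V$ and the Wang sequence.

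The gap in your proposal is that the punch line --- ``a Mayer--Vietoris / Wang sequence then identifies the left-hand side with the classical Farrell--Hsiang / Shaneson semidirect-product decomposition of $L_*^\infdec(\Z V)$; equivalently, Cappell's Nil obstructions vanish for type~(II) virtually cyclic groups'' --- is exactly the assertion whose published proof the paper flags as incomplete (the remark says that \cite[Lemma~4.2]{Lueck_Heisenberg} ``lacks some details''), and this is why the authors retreat to the special case. The assertion is true, but you need to pin it down: (i) the Ranicki splitting for a \emph{twisted} Laurent ring at the $-\infty$ decoration (this is where the absence of a Nil-type correction term must be established; Shaneson's formula as usually stated is untwisted, and Farrell--Hsiang's twisted version is originally at geometric decorations), and (ii) the comparison of your Wang sequence (computed from the $V$-CW structure on $\R$ with isotropy $F$ on both cells) with the Ranicki twisted Laurent sequence, including an honest identification of the Davis--L\"uck assembly map with the algebraic one in the presence of the twist. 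Without these citations nailed down, your argument proves no more than the paper's, and the phrase ``Cappell's Nil obstructions vanish'' is not quite the right invariant (Cappell's $\UNil$ is for type~(III) amalgams, not type~(II) HNN/Laurent extensions); you want the LNil-type term of Ranicki's twisted Laurent theory, which does vanish at $-\infty$, but that requires its own reference.

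Two small additional remarks: your claim that the extension $1\to F\to V\to C_\infty\to 1$ splits is correct (any extension with free quotient splits), and your insistence on maintaining the $-\infty$ decoration throughout is exactly the right concern --- it is what makes the K-theoretic corrections to the Shaneson/Ranicki splitting disappear.
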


\begin{rem}
Part~\eqref{lem:allvc} is simply the modern statement of the Farrell--Jones Conjecture.
Part~\eqref{lem:fbcfin} should be contrasted with the corresponding result in $K$-theory: $H_*^\G(E_\vc\G, E_\fbc \G; \ul{\bK}_R)=0$ for any group $\G$ and for any ring $R$ (see \cite{dqr}, also \cite{dkr}).
Part~\eqref{lem:fbcfin} is given as Lemma 4.2 of \cite{Lueck_Heisenberg}, however the proof lacks some details, so we give a complete proof in a special case.
\end{rem}

\begin{proof}[Proof of Lemma \ref{lem:change_of_family} \eqref{lem:fbcfin} in the case where $\G$ satisfies Hypothesis (B)]  Suppose $\G$ is a group satisfying Hypothesis~(B).
By the Farrell--Jones Transitivity Principle (see~\cite[Theorem~65]{LR_survey}), and by Lemma~\ref{lem:surprise}, it suffices to show:
\[
 H_*^{C_\infty}(E_\fbc C_\infty, E_\fin C_\infty; \ul{\bL}) ~=~ 0.
\]
That is, we must show the following Davis--L\"uck assembly map is an isomorphism:
\[
H_*^{C_\infty}(\R; \ul{\bL}) ~\longrightarrow~ H_*^{C_\infty}(\ast; \ul{\bL}).
\]
Since $\R$ is a simply connected, free $C_\infty$-CW complex, by \cite[Theorem~B.1]{CDK} (see also \cite{HambletonPedersen}), this is equivalent to the following Quinn--Ranicki assembly map being an isomorphism:
\[
H_*(S^1; \bL(\Z)) ~\longrightarrow~ L_*(\Z[C_\infty]).
\]
In other words, we must show the vanishing of Ranicki's algebraic structure groups:
\[
\mcS_*^{per}(S^1) ~=~ 0.
\]
For all $n \geqslant 4$, by \cite[Theorem~18.5]{Ranicki_ALTM}, there is a bijection:
\[
\mcS_\TOP(S^1 \times D^n \rel \bdry) ~\longrightarrow~ \mcS_{n+2}(S^1 \times D^n) ~\iso~ \mcS_{n+2}(S^1).
\]
For all $n \geqslant 4$, by the Farrell--Hsiang rigidity theorem~\cite[Theorem~4.1]{FH_spaceform}, the structure set $\mcS_\TOP(S^1 \times D^n \rel \bdry)$ is a singleton.
Thus $\mcS^{per}_k(S^1) =\mcS_k(S^1)=0$ for all $k \geqslant 6$.
These structure groups are 4-periodic, so $\mcS_*^{per}(S^1)=0$.
\end{proof}

Lastly, we recall the identification of \cite[Lemma~4.6]{CDK}, done for $L$-theory.

\begin{lem}[Connolly--Davis--Khan]\label{lem:UNil_relhom}
Let $n \in \Z$ and write $\eps := (-1)^n$.
The following composite map, starting with Cappell's inclusion, is an isomorphism:
\[\begin{tikzcd}
\UNil_{n+\eps}(\Z;\Z,\Z) \rar[dashed] \dar[tail] & H_{n+1}^{D_\infty}(E_\all D_\infty,E_\fin D_\infty; \ul{\bL})\\
L_{n+1}(C_2^\eps * C_2^\eps) \rar{\iso} & H_{n+1}^{D_\infty}(E_\all D_\infty; \ul{\bL}) \uar
\end{tikzcd}\]
\end{lem}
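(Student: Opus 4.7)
The plan is to apply the long exact sequence of the pair $(E_\all D_\infty, E_\fin D_\infty)$ in $D_\infty$-equivariant $L$-homology, and to identify the various terms using the Farrell--Jones Conjecture and Cappell's splitting theorem for amalgamated free products. Since $D_\infty$ itself is virtually cyclic, $E_\vc D_\infty = E_\all D_\infty = \mathrm{pt}$, so the Farrell--Jones Conjecture for $D_\infty$ (which is classical, being a consequence of the Farrell--Hsiang theorem; alternatively~\cite{BL_CAT0}, \cite{Wegner}) furnishes a natural isomorphism $H^{D_\infty}_{*}(E_\all D_\infty; \ul\bL) \cong L_*(\Z[D_\infty^w])$, where $w$ is the orientation character sending both generators to $\eps = (-1)^n$ (this is the $C_2^\eps * C_2^\eps$ in the statement).

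Next, I would compute $H^{D_\infty}_*(E_\fin D_\infty; \ul\bL)$ from the standard $D_\infty$-CW structure on $E_\fin D_\infty = \R$, which has two $0$-cell orbits (one for each conjugacy class of $C_2$, with isotropy $C_2^\eps$) and a single $1$-cell orbit with trivial isotropy. The cellular pushout yields a Mayer--Vietoris long exact sequence
\[
\cdots \to L_*(\Z) \to L_*(\Z[C_2^\eps]) \oplus L_*(\Z[C_2^\eps]) \to H^{D_\infty}_*(E_\fin D_\infty; \ul\bL) \to L_{*-1}(\Z) \to \cdots,
\]
exactly as in \cite[Lemma~4.6]{CDK} (where the analogous identification was carried out). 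On the other hand, Cappell's sum theorem for $D_\infty = C_2^\eps *_{\{1\}} C_2^\eps$, combined with the definition of $\UNil$ as the summand measuring the failure of Mayer--Vietoris, gives a parallel exact sequence
\[
\cdots \to L_*(\Z) \to L_*(\Z[C_2^\eps]) \oplus L_*(\Z[C_2^\eps]) \to L_*(\Z[D_\infty^w])\,/\,\UNil^{(w)}_*  \to L_{*-1}(\Z) \to \cdots,
\]
where Cappell's inclusion splits $\UNil^{(w)}_*$ off as a direct summand of $L_*(\Z[D_\infty^w])$. Comparing the two sequences by the assembly map and invoking the Five Lemma, the composite in the statement identifies $H^{D_\infty}_{n+1}(E_\all D_\infty, E_\fin D_\infty; \ul\bL)$ with this UNil summand.

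Finally, I would translate the ``twisted'' UNil summand $\UNil^{(w)}_{n+1}$ into the standard notation $\UNil_{n+\eps}(\Z;\Z,\Z)$ used in the paper. Since flipping the orientation character on $\Z[C_2]$ shifts the associated $L$-theory (and UNil) by two, one has the general relation $\UNil^{(w)}_{n+1}(\Z;\Z^\eps,\Z^\eps) \cong \UNil_{n+\eps}(\Z;\Z,\Z)$, either by direct bookkeeping using periodicity and skew-suspension or by appealing to the naturality of Cappell's inclusion under sign changes (cf.~\cite{CK_nilgps,CR,BR}).

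The main obstacle is the last step: keeping track of the orientation character $\eps$ through the two parallel sequences and verifying that the UNil summand appearing as the cokernel of the finite-family assembly map is precisely $\UNil_{n+\eps}$ and not $\UNil_{n+1}$. Everything else is formal once one has the Mayer--Vietoris description of $H^{D_\infty}_*(E_\fin D_\infty; \ul\bL)$ and Cappell's splitting. The compatibility of the two inclusions---the Cappell algebraic inclusion and the one induced by the connecting homomorphism in the pair---ultimately reduces to the functorial definition of the Davis--L\"uck assembly map, so no further geometric input is needed.
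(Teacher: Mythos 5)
The paper offers no proof of this lemma---it is quoted directly as \cite[Lemma~4.6]{CDK}---so there is no in-paper argument to compare against; your outline (Mayer--Vietoris for $E_\fin D_\infty = \R$ with its two-orbit $D_\infty$-CW structure, Cappell's splitting, comparison of the two long exact sequences, and skew-suspension to convert $\UNil^{(w)}_{n+1}$ into $\UNil_{n+\eps}(\Z;\Z,\Z)$) is the expected route and is essentially sound.

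One small correction: the identification $H^{D_\infty}_*(E_\all D_\infty;\ul\bL) \cong L_*(\Z D_\infty^w)$ is \emph{not} an application of the Farrell--Jones Conjecture. It holds tautologically for every group, since $E_\all \G$ is $\G$-contractible and $H^\G_*(\pt;\ul\bL) = \pi_*\ul\bL(\G/\G) = L_*(\Z\G)$ by definition of the Davis--L\"uck functor; and since $D_\infty$ is itself virtually cyclic, $E_\vc D_\infty$ is also a point and the FJC statement for $D_\infty$ is vacuous. The genuine content of the argument is the step you correctly flag at the end: one must check that the assembly map $H^{D_\infty}_*(E_\fin D_\infty;\ul\bL) \to L_*(\Z D_\infty^w)$ carries the domain split-injectively onto the Mayer--Vietoris summand complementary to Cappell's $\UNil$ summand, so that the long exact sequence of the pair $(E_\all D_\infty, E_\fin D_\infty)$ degenerates into short exact sequences and $H^{D_\infty}_{n+1}(E_\all D_\infty, E_\fin D_\infty; \ul\bL)$ becomes the cokernel, namely $\UNil^{(w)}_{n+1}$. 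That compatibility is not formal in the loosest sense---it requires matching the Davis--L\"uck assembly map against Cappell's geometric/algebraic Mayer--Vietoris map---but it is exactly what the cited \cite[Lemma~4.6]{CDK} carries out, so modulo making that precise your proof is correct.
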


The next lemma uses excision and is a more abstract version of 
 \cite[Lemma~4.5]{CDK}.

\begin{lem}\label{lem:directsum}
Suppose $\G$ is a group satisfying Hypotheses~(BC\tp ii).
Let $n \in \Z$.
Assume the orientation character $\omega: \G \to \{\pm 1\}$ evaluates to $\eps := (-1)^n$ on all elements of order two.
Then there is an inclusion-induced isomorphism
\[
\begin{tikzcd}\displaystyle\bigoplus_{(\midd)(\G)} \UNil_{n+\eps}(\Z;\Z,\Z) \rar & H_{n+1}^\G(E_\all \G, E_\fin \G; \ul{\bL}).\end{tikzcd}
\]
\end{lem}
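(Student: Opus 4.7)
The plan is to shift the isotropy family step by step in $H^\G_{n+1}(E_\all\G, E_\fin\G; \ul\bL)$ until only infinite dihedral contributions remain, and then identify each such contribution with a copy of $\UNil$ via Lemma~\ref{lem:UNil_relhom}.

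First, I would invoke the Farrell--Jones Conjecture in $L$-theory (available through Hypothesis~(C\tp iii)): Lemma~\ref{lem:change_of_family}(2) gives $H^\G_*(E_\all\G, E_\vc\G; \ul\bL)=0$, and Lemma~\ref{lem:change_of_family}(1), which needs Hypothesis~(B), gives $H^\G_*(E_\fbc\G, E_\fin\G; \ul\bL)=0$. Plugging both vanishings into the long exact sequences of the pair inclusions $E_\vc\G\subset E_\all\G$ and $E_\fin\G\subset E_\fbc\G$ collapses the target to
\[
H^\G_{n+1}(E_\all\G, E_\fin\G;\ul\bL) ~\cong~ H^\G_{n+1}(E_\vc\G, E_\fbc\G;\ul\bL).
\]

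Next, Lemma~\ref{lem:selfnormalizing}(2) says that Hypotheses~(BC\tp ii) make $\G$ satisfy $NM_{\fbc\subset\vc}$: each $V \in \vc(\G)-\fbc(\G)$ is contained in a unique maximal such $V_{\max}$, and $V_{\max}$ is self-normalizing; Lemma~\ref{lem:surprise} further tells us each $V_{\max}$ is infinite dihedral, so their conjugacy classes form precisely $(\midd)(\G)$. This is the input for the standard equivariant pushout presentation of $E_\vc\G$ relative to $E_\fbc\G$ built from pieces $\G\times_{V_{\max}} E_\fbc V_{\max}$ and $\G\times_{V_{\max}} E_\vc V_{\max}$. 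Equivariant excision combined with Davis--L\"uck induction (together with $E_\vc V_{\max}=\pt$) then produces
\[
H^\G_{n+1}(E_\vc\G, E_\fbc\G;\ul\bL) ~\cong~ \bigoplus_{(V_{\max})\in (\midd)(\G)} H^{V_{\max}}_{n+1}(\pt, E_\fbc V_{\max};\ul\bL).
\]

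To finish, for each $V_{\max}\cong D_\infty$ observe $E_\all V_{\max}=E_\vc V_{\max}=\pt$, and Lemma~\ref{lem:change_of_family}(1) applied to $V_{\max}$ gives $H^{V_{\max}}_*(E_\fbc V_{\max},E_\fin V_{\max};\ul\bL)=0$. The triple long exact sequences then collapse the summand to $H^{V_{\max}}_{n+1}(E_\all V_{\max}, E_\fin V_{\max};\ul\bL)$. The orientation hypothesis forces both order-two generators of $V_{\max}$ to have sign~$\eps$, so as an oriented group $V_{\max}$ is $C_2^\eps*C_2^\eps$; Lemma~\ref{lem:UNil_relhom} identifies this summand with $\UNil_{n+\eps}(\Z;\Z,\Z)$. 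Chaining the three isomorphisms, and noting that each step is visibly induced by the subgroup inclusions $V_{\max}\hookrightarrow\G$, delivers the claimed bijection.

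The main obstacle I anticipate is the excision step: making the pushout decomposition of $E_\vc\G$ in terms of $E_\fbc\G$ and the $V_{\max}$ rigorous, and justifying that the subgroup $V_{\max}$ itself (rather than $N_\G(V_{\max})$) may be used in the induction formula. Both reductions rest squarely on the $NM_{\fbc\subset\vc}$ conclusion of Lemma~\ref{lem:selfnormalizing}(2), which uses Hypothesis~(B) and~(C\tp ii) together; without self-normalization one would instead see the equivariant homology of $E_\vc N_\G(V_{\max})$ relative to $E_\fbc N_\G(V_{\max})$ in each summand, and no clean UNil identification would be available.
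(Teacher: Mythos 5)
Your argument is correct and matches the paper's proof step for step: collapse $(E_\all\G,E_\fin\G)$ to $(E_\vc\G,E_\fbc\G)$ via Lemma~\ref{lem:change_of_family}, split over conjugacy classes of maximal infinite dihedral subgroups (the paper does this by citing \cite[Corollary~2.8]{LW} together with the Induction Axiom, which is exactly the rigorous form of the pushout/excision step you flag as the main obstacle), reduce each summand back to $(E_\all D,E_\fin D)$ using Lemma~\ref{lem:change_of_family} again, and finish with Lemma~\ref{lem:UNil_relhom}. You are also right that the first reduction invokes the $L$-theory Farrell--Jones Conjecture, i.e.\ Hypothesis~(C\tp iii); the paper uses this implicitly through Lemma~\ref{lem:change_of_family}(2) even though the lemma statement lists only (BC\tp ii).
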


\begin{proof}
By Lemma~\ref{lem:change_of_family}, there are induced isomorphisms
\[
H_*^\G(E_\all \G,E_\fin \G; \ul{\bL}) \xrightarrow{~\cong~} H_*^\G(E_\all \G,E_\fbc \G; \ul{\bL}) \xleftarrow{~\cong~} H_*^\G(E_\vc \G,E_\fbc \G; \ul{\bL}).
\]
Write $(\mvc)(\G)$ for the set of conjugacy classes of maximal virtually cyclic subgroups of $\G$.
By Lemma~\ref{lem:selfnormalizing}, $\G$ satisfies $NM_{\fbc \subset \vc}$.
Then, by \cite[Corollary~2.8]{LW} and the Induction Axiom \cite[Prop.~157, Thm.~158(i)]{LR_survey}, there is an isomorphism
\[
\bigoplus_{(D) \in (\mvc)(\G)} H_*^D(E_\vc D,E_\fbc D; \ul{\bL}) ~\longrightarrow~ H_*^\G(E_\vc \G,E_\fbc \G; \ul{\bL}).
\]
By Lemma~\ref{lem:surprise}, we may replace the index set $(\mvc)(\G)$ by $(\midd)(\G)$.
Let $(D) \in (\midd)(\G)$. By Lemma~\ref{lem:change_of_family} again, there are induced isomorphisms
\[
H_*^D(E_\all D,E_\fin D; \ul{\bL}) \xrightarrow{~\cong~} H_*^D(E_\all D,E_\fbc D; \ul{\bL}) \xleftarrow{~\cong~} H_*^D(E_\vc \G,E_\fbc D; \ul{\bL}).
\]
Thus there is an isomorphism
\[
\bigoplus_{(D) \in (\midd)(\G)} H_*^D(E_\all D,E_\fin D; \ul{\bL}) ~\longrightarrow~ H_*^\G(E_\all \G,E_\fin \G; \ul{\bL}).
\]
Finally,  Lemma~\ref{lem:UNil_relhom} gives the desired conclusion.
\end{proof}

Now we put the pieces together and prove our Main Theorem.

\begin{proof}[Proof of Theorem~\ref{thm:main}]
By Theorem~\ref{thm:equi-iso} (whose proof was given in Section~\ref{sec:eq_to_iso}), 
the  forgetful map is a bijection:
\[
\mcS^{iso}(X,\G) ~\longrightarrow~ \mcS(X,\G) ~\longrightarrow~ \mcS(\G).
\]
Since Hypothesis~(C\tp iii) holds for $K$-theory, by Lemma~\ref{lem:compactification}, there is a bijection:
\[
\mcS^h_\TOP(\ol{X},\bdry\ol{X}) ~\longrightarrow~ \mcS^{iso}(X,\G).
\]
By Remark~\ref{rem:TSO} and Corollary~\ref{cor:h_inf}, the following composition is injective:
\begin{equation}\label{eqn:injective}
\mcS^h_\TOP(\ol{X},\bdry\ol{X}) ~\longrightarrow~ \mcS^h_{n+1}(\ol{X},\bdry\ol{X}) ~\longrightarrow~ \mcS^{per,h}_{n+1}(\ol{X},\bdry\ol{X}) ~\longrightarrow~ \mcS^{per,\infdec}_{n+1}(\ol{X},\bdry\ol{X}).
\end{equation}
Since Hypothesis~(C\tp iii) holds for $L$-theory, by Lemma~\ref{lem:split}, there is an isomorphism:
\[
H^\G_{n+1}(E_\all \G,E_\fin \G; \ul{\bL}) ~\longrightarrow~ \mcS^{per,\infdec}_{n+1}(\ol{X},\bdry\ol{X}).
\]
Consider the  diagram:
\[
\begin{tikzcd}
\displaystyle\bigoplus \UNil_{n+\eps}(\Z;\Z,\Z) \rar \dar &
L_{n+1}^h(\G,\omega) \rar \dar & 
\mcS^h_\TOP(\ol{X},\bdry\ol{X}) \dar\\
\displaystyle\bigoplus H^D_{n+1}(E_\all D,E_\fin D; \ul{\bL}) \rar &
H^\G_{n+1}(E_\all\G,E_\fin\G; \ul{\bL}) \rar &
\mcS^{per,\infdec}_{n+1}(\ol{X},\bdry\ol{X}).
\end{tikzcd}
\]
The definitions of all the maps should be clear to the reader.
Assuming momentarily that the diagram commutes, we complete the proof of Theorem~\ref{thm:main}.
We have already shown that the composition from the upper left of the diagram to the lower left, to the lower middle, and finally to the lower right is a bijection.
Furthermore, Remark~\ref{rem:TSO} and Corollary~\ref{cor:h_inf} show that the right vertical map is an injection.
It follows that the right vertical map is a bijection.
This concludes the proof of Theorem~\ref{thm:main}.
Note that, as a byproduct, we have also shown that
\[
\begin{tikzcd}\mcS^h_{n+1}(\ol{X},\bdry\ol{X}) \rar & \mcS^{per,h}_{n+1}(\ol{X},\bdry\ol{X})\end{tikzcd}
\]
is an isomorphism, as promised in Remark~\ref{rem:TSO}.

Finally, we need to argue that the above diagram is commutative.
The left hand square commutes by the definition of the maps.
The commutativity of right square follows from three commutative squares, the first of which is:
\[
\begin{tikzcd}
L_{n+1}^h(\G,\omega) \rar \dar & 
\mcS^h_\TOP(\ol{X}) \dar\\
H^\G_{n+1}(E_\all\G; \ul{\bL}) \rar &  \mcS^{per,\infdec}_{n+1}(\ol{X}).
\end{tikzcd}
\]
Here, the bottom map is the composite of $H^\G_{n+1}(E_\all\G; \ul{\bL}) \to H^\G_{n+1}(E_\all\G, \ol{X}; \ul{\bL})$ and the identification $H^\G_{n+1}(E_\all\G, \ol{X}; \ul{\bL}) \cong \mcS^{per,\infdec}_{n+1}(\ol{X})$ of \cite[Appendix~B]{CDK}.
The commutativity of the square then follows from \cite[Appendix~B]{CDK} and Ranicki's identification of algebraic and geometric structure sets \cite[Theorem~18.5]{Ranicki_ALTM}.

The second square is 
\[
\begin{tikzcd}
H^\G_{n+1}(E_\all\G; \ul{\bL}) \rar \dar &  \mcS^{per,\infdec}_{n+1}(\ol{X}) \dar\\
H^\G_{n+1}(E_\all\G,E_\fin\G; \ul{\bL}) \rar &
\mcS^{per,\infdec}_{n+1}(\ol{X},\bdry\ol{X})
\end{tikzcd}
\]
and the third square is
\[
\begin{tikzcd}
\mcS^h_\TOP(\ol{X}) \rar \dar &  \mcS^h_\TOP(\ol{X},\bdry\ol{X})\dar\\
\mcS^{per,\infdec}_{n+1}(\ol{X}) \rar & \mcS^{per,\infdec}_{n+1}(\ol{X},\bdry\ol{X}).
\end{tikzcd}
\]

These last two squares commute by naturality of all the maps involved.
\end{proof}

The map $\partial$ in Theorem \ref{thm:main} is given by applying the Wall realization theorem to the manifold $\ol{X}$.
In particular, if one element of $\mcS(\G)$ admits a smooth structure then so do all elements of $\mcS(\G)$.

\section{Equivariant $h$-cobordisms} \label{sec:eq_h}

\begin{proof}[Proof of Theorem~\ref{thm:hcobordism_rigidity}]
By Hypothesis~(A), there exists a torsion-free subgroup $\G_0$ of $\G$ of finite index.
By intersecting $\G_0$ with its finitely many conjugates, we may assume $\G_0$ is normal in $\G$.
Write $G := \G/\G_0$ for the finite quotient group.

Let $(W;M,M')$ be a locally flat $\G$-isovariant $h$-cobordism.
By \cite[Corollary~1.6]{Quinn_HSS} applied to the locally flat $G$-isovariant $h$-cobordism $(W/\G_0; M/\G_0, M'/\G_0)$, we obtain that $(W/\G; M/\G, M'/\G)$ is an $h$-cobordism of manifold homotopically stratified spaces.
Observe $M/\G$ has only two strata:
\[
M_n := M_{free}/\G \quad\text{and}\quad \delta M_n = M_{sing}/\G.
\]
Quinn's stratified torsion for the quotient $h$-cobordism is an element of his obstruction group $H_1(c\delta M_n, \delta M_n; \bP(q_n))$.
(This is a relative homology group with cosheaf coefficients in non-connective pseudo-isotopy theory; see \cite[Definition~8.1]{Quinn_EndsII}.)
This obstruction group fits into the exact sequence of the pair that simplifies to:
\begin{multline*}
\bigoplus_{\G x \in \delta M_n} \Wh_1(\G_x) \xrightarrow{~A_1~}
\Wh_1(\G) \longra
H_1(c\delta M_n, \delta M_n; \bP(q_n))\\
\xrightarrow{~\bdry~}
\displaystyle\bigoplus_{\G x \in \delta M_n} \Wh_0(\G_x) \xrightarrow{~A_0~}
\Wh_0(\G).
\end{multline*}
Here, for any point $x$ in the singular set $M_{sing}$, its orbit and isotropy group are:
\[
\G x := \{gx ~|~ g \in \G\} \quad\text{and}\quad \G_x := \{g \in \G ~|~ gx=x \}.
\]

By Corollary~\ref{cor:sumWh}, $A_1$ is surjective and $A_0$ is injective.
Then Quinn's obstruction group vanishes: $H_1(c\delta M_n, \delta M_n; \bP(q_n)) = 0$.
So, by \cite[Theorem~1.8]{Quinn_HSS}, $W/\G$ is a stratified product cobordism. 
  
   Let $f:M/\G\times(I; 0, 1)\to (W/\G; M/\G, M'/\G)$ be a homeomorphism of stratified spaces.  Each stratum of $W$ is a covering space of the corresponding stratum in $W/\G$. So for each $x\in M$, the path,
   \[
   f_x: I \to W/\G;\qquad f_x(t):= f(x,t)
       \]  
         lifts uniquely to a path in $F_x:I\to W$, with  $F_x(0)=x$. This specifies a bijection, 
   \[
   F:M\times I\to W;\qquad \qquad
  F(x,t):=F_x(t)
  \]
  
  Granting for a moment that $F$ is continuous, it follows that $F$ is a $\G$-homeomorphism and $W$ is a product cobordism. In particular, $M$ and $M'$ are $\G$-homeomorphic.

  We need only prove that $F$ is continuous at each point of $ M_{sing}\times I$. 
  So let $(x_0, t)\in M_{sing}\times I$. Set $y_0= F(x_0, t)$. 
  
  A basis of neighborhoods of $y_0$ in $W$ is given by the collection of sets of form
  \[
  U_0\cap \rho^{-1}(\mcO), 
 \] 
 where:
  \begin{enumerate}
  \item $\mcO$ is an open set of $I$ containing $t$, and 
  \[
  \rho:= p_2\circ f^{-1}\circ\pi^W: W\to W/\G\to M/\G\times I \to I
  \]
  \item $U_0$ is a single component of  a $\G$-invariant open set $U$ of $W$ such that 
  \[
  F(\{x_0\}\times I)= U_0\cap W_{sing}.
  \]
  \end{enumerate}
  Given such $U_0$ and $\mcO$ then, we seek a neighborhood $N_0$ of $x_0$ in $M$ such that
  \[
  F(N_0\times \mcO)\subset U_0\cap \rho^{-1}(\mcO).
  \]
  Let $\mcN$ be a connected open neighborhood of $\pi^M(x_0)$ in $M/\G$ such that 
  \[
  f(\mcN\times I)\subset \pi^W(U) \text{\; in } W/\G. 
  \]
 Let $N_0$ be the component of $(\pi^M)^{-1}(\mcN)$ containing $x_0$. Then
 $F(N_0\times I)\subset U$, and $F(N_0\times I)$ is connected (since $F(N_0\times 0)=N_0$ and $N_0$ is connected).
 Therefore $F(N_0\times I)\subset U_0$.  Also $F(N_0\times \mcO)\subset \rho^{-1}(\mcO).$ So
 \[
 F(N_0\times \mcO)\subset U_0\cap \rho^{-1}(\mcO),
 \]
as required. This proves $F$ is continuous.  
\end{proof}

\section{Examples}  \label{sec:examples}

In this section we give examples of groups satisfying the hypotheses of Theorem \ref{thm:main}, and hence groups where the question of equivariant rigidity is completely answered.

We first give some examples of groups $\G$ satisfying Hypotheses~(ABC).
First and foremost, we mention the group $\G = \Z^n \rtimes_{-1} C_2$ (with $n \geqslant 5$) which was the subject of our previous paper \cite{CDK}.   Our second example is the generalization $\G = \Z^n \rtimes_{\rho} C_m$ where $n \geqslant 5$ and where the $C_m$-action given by $\rho$ is free on $\Z^n - \{0\}$.
This generalization requires the $K$-theory analysis of our current paper, and if $m$ is odd, give examples of groups satisfying the rigidity of Corollary~\ref{cor:2torsionfree}.

Next suppose $G$ is a discrete cocompact subgroup of the isometries of the hyperbolic or Euclidean plane without reflections, thus the normalizers of nontrivial finite subgroups are finite.
This is a subject of classical interest and there is a ready supply of examples.
Note the product $\G = G^r$ for $r \geqslant 3$ satisfies Hypotheses~(ABC).

The announcement \cite{Khan_announce} gave examples satisfying Hypotheses~(ABC\tp) but not Hypotheses~(ABC).
The first of these constructions is due to Davis--Januskiewicz, and it is obtained from a triangulated homology sphere $\Sigma$ with nontrivial fundamental group.
The dual cones of the triangulation decompose $\Sigma$ as a union of contractible closed subcomplexes (``mirrors'') any of whose intersections are either empty or contractible (``submirrors'').
This structure gives a right-angled Coxeter group $W$.
The reflection trick of Mike Davis allows the construction of a contractible manifold $X$ obtained from $\Sigma$ on which a subgroup $\G$ of $W$ acts pseudo-freely and has 2-torsion.
This $X$ is not homeomorphic to Euclidean space; see \cite[Example~3.2]{Khan_announce}.
A second example is given there, derived from a Heisenberg-type group.
Unlike the above examples, it is not $\CAT(0)$, but does satisfy Hypotheses~(ABC\tp).

Finally, following a suggestion of the referee, we gives examples based on Gromov's technique of hyperbolization \cite[Section 3.4]{Gromov}.

\begin{lem}
 For any pseudo-free PL-action of a finite group $G$ on a closed PL-manifold $K$, there is a effective, cocompact, proper, isometric PL-action of a discrete group $\G$ on a complete CAT(0) PL-manifold $X$ so that the following conditions are satisfied.
 
\begin{enumerate}

\item  There is a short exact sequence 
$$
1 \to \G_0 \to \G \to G \to 1
$$
of  groups with $\G_0$ torsion-free.

\item The dimensions of $X$ and $K$ are equal and there is an $G$-isovariant map $X/\G_0 \to K$.

\item  The action of $\G$ on $X$ satisfies hypotheses (ABC\tp) of the introduction and hence $\G$ satisfies the conclusion of  Theorem \ref{thm:main}.

\end{enumerate}

\end{lem}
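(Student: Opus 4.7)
The plan is to apply an equivariant form of Gromov's hyperbolization \cite{Gromov} (using either Davis--Januszkiewicz or Charney--Davis strict hyperbolization) to the simplicial $G$-manifold $K$. This produces a closed aspherical PL $G$-manifold $K_h$ together with a $G$-equivariant map $h\colon K_h \to K$ that is dimension-preserving, $G$-isovariant (stabilizers are preserved), and identifies links of corresponding vertices. Let $X$ denote the universal cover of $K_h$, equipped with the piecewise Euclidean CAT(0) PL-metric (arranged to be $G$-invariant and complete; using strict hyperbolization we may further assume this metric to be locally CAT$(-1)$). Define $\G$ to be the orbifold fundamental group of $K_h/G$, equivalently the group of lifts to $X$ of the $G$-action on $K_h$. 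Then $\G_0 := \pi_1(K_h)$ is a normal subgroup of $\G$ with quotient $G$, which gives the short exact sequence in condition~(1); and since $K_h$ is a closed aspherical manifold, $\G_0$ acts freely on the contractible manifold $X$, hence $\G_0$ is torsion-free. The $G$-equivariant map $X/\G_0 = K_h \xrightarrow{h} K$ establishes condition~(2).

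For condition~(3), I verify Hypotheses~(ABC\tp) in turn. Hypothesis~(A) is immediate from the short exact sequence. For Hypothesis~(B): after an equivariant barycentric subdivision arranging that every simplex of $K$ is pointwise fixed by its stabilizer, pseudo-freeness of $G \curvearrowright K$ means that only isolated vertices of $K$ carry nontrivial stabilizer. Since hyperbolization preserves the link of each vertex, the induced action $G \curvearrowright K_h$ is again pseudo-free with the same isotropy data; consequently $\G \curvearrowright X$ is pseudo-free, which by Proposition~\ref{prop:onepoint} is equivalent to Hypothesis~(B). For Hypothesis~(C\tp i), the quotient $X_{free}/\G$ is the interior of the compact PL $\bdry$-manifold obtained from $K_h/G$ by deleting small open stars about the finitely many singular vertices, hence has the homotopy type of a finite CW complex. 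For Hypothesis~(C\tp ii), argue exactly as in the (ABC) $\Rightarrow$ (ABC\tp) discussion of the introduction: any infinite dihedral $\Delta \leqslant \G$ contains two distinct involutions, each fixing a single point of $X$ by pseudo-freeness together with the CAT(0) fixed-point theorem, and the geodesic through these two points is stabilized by the unique maximal infinite dihedral subgroup of $\G$ containing $\Delta$. Finally Hypothesis~(C\tp iii) holds by \cite{BL_CAT0} and \cite{Wegner}, since $\G$ acts properly, cocompactly, and isometrically on the CAT(0) manifold $X$. Theorem~\ref{thm:main} then applies to $\G$.

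The main obstacle is the verification of Hypothesis~(B), that is, preservation of pseudo-freeness under equivariant hyperbolization. This hinges on two choices: a hyperbolization functor that is simplicial and preserves vertex links (such as Davis--Januszkiewicz or Charney--Davis), and an equivariant subdivision of $K$ concentrating all nontrivial isotropy at isolated vertices. Once these are arranged, pseudo-freeness propagates from $K$ to $K_h$ and then to $X$, and the remaining items (C\tp i)--(C\tp iii) reduce to standard CAT(0) and Farrell--Jones inputs already invoked in the introduction.
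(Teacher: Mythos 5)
Your proposal is correct and takes essentially the same route as the paper: apply Davis--Januszkiewicz hyperbolization to a suitably subdivided simplicial $G$-complex, set $X$ equal to the universal cover of $h(K)$, take $\G$ to be the group of lifts of the $G$-action, and verify Hypotheses (ABC\tp) in the same order with the same inputs. The only spot where you are noticeably more terse than the paper is Hypothesis (C\tp ii): you invoke ``the geodesic through these two points'' without showing that the full $\Delta$-orbit of fixed points lies on a single bi-infinite geodesic, whereas the paper carries out the midpoint argument (showing $x_{n+k}$ is the midpoint of $\overline{x_n x_{n+2k}}$) to establish that the concatenation of the geodesic segments is actually a geodesic line in the CAT(0) space $X$.
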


\begin{proof}

We will use the hyperbolization of Davis--Januszkiewicz \cite{DJ}.
This hyperbolization is an assignment which, for every finite simplicial complex $K$, assigns a compact polyhedron $h(K)$ which is a locally CAT(0), piecewise Euclidean, geodesic metric space and a PL-map $p(K) : h(K) \to K$ and for every simplicial embedding $f : K \to L$, assigns  an isometric PL-embedding onto a totally geodesic subpolyhedron $h(f): h(K) \to h(L)$.
It satisfies the following properties:
 
\begin{itemize}
 
\item (Functoriality)  The functorial identities $h(\id_K) = \id_{h(K)}$ and $h(f \circ g) = h(f) \circ h(g)$ and the natural transformation identity $p(L) \circ h(f) = f \circ p(K)$.

\item (Local structure)  If $\sigma^n$ is a closed $n$-simplex of $K$ then $h(\sigma^n)$ is a compact  $n$-manifold with boundary and the link of $h(\sigma^n)$ in $h(K)$ is PL-homeomorphic to the link of $\sigma^n$ in $K$.

\item $h(\text{point}) = \text{point}$.

\end{itemize}

Consider a pseudo-free PL-action of a finite group $G$ on a closed, PL-manifold $K$.   After  subdivisions, assume that $K$ is a simplicial complex, that invariant simplicies are fixed and that the star of the singular set is a regular neighborhood of the singular set.
Functoriality gives a $G$-action by PL-isometries on the hyperbolization $h(K)$.   Note that $h(K)$ is an aspherical manifold: it is a manifold whose dimension equals that of $K$ by the local structure property and it is aspherical since its universal cover is a simply-connected complete CAT(0) space, hence contractible.  Naturality shows that $p(K) : h(K) \to K$ is $G$-isovariant.  Let $\Gamma$ be the group of homeomorphisms of the universal cover $X$ of $h(K)$ which cover elements of $G$ and let $\Gamma_0$ be the fundamental group of $X$.    Then clearly the $\Gamma$-action on $X$ is  
effective, cocompact, proper, isometric, and PL.   Conditions (1) and (2) are satisfied.   

Now we turn to hypotheses (ABC\tp).   Part (1) shows hypothesis (A).   Since the hyperbolization of a $0$-simplex is a point by the local structure property, the $G$-action on $h(K)$ is pseudo-free, hence so is the $\Gamma$-action on $X$.   Proposition \ref{prop:onepoint} then shows that hypothesis (B) holds.  To see that hypothesis (C\tp i) holds,  note that $X_{free}/\G = h(K)_{free}/G = (h(K) - h(K)_{sing})/G \simeq (h(K) - \text{int star }h(K)_{sing})/G$, which is a finite complex.  

Hypothesis (C\tp ii) states that every infinite dihedral subgroup of $\G$ lies in a unique maximal infinite dihedral subgroup.   Let $\Delta$ be an infinite dihedral subgroup of $\G$ and let $\Delta =  \langle t \rangle \rtimes  \langle a \rangle$  where $t$ has infinite order and $a$ has order 2.  Let $x_n \in X$ be the fixed point of the involution $t^na \in \Delta \subset \G$. Let
\[
\gamma = \bigcup_{n\in \Z} \overline{x_n x_{n+1}},
\]
where $\overline{x_n x_{n+1}}$ is the geodesic line segment joining the two points (there is a unique geodesic segment connecting any two points, since $X$ is CAT(0)).   We now claim that for any $n$ and $k$, one has $x_{n+k} \in \overline{x_n x_{n+2k}}$.   This holds since $t^{n+k} a$ interchanges the end points of the geodesic segment $\overline{x_n x_{n+2k}}$ and must leave the midpoint invariant.   Since the fixed sets of all involutions are singletons, $x_{n+k}$ must be the midpoint of the geodesic segment.   The claim implies that $\gamma$ is a geodesic and is invariant under $\Delta$.   
Let $\wh \Delta = \{ g \in \G \mid g(\gamma) = \gamma \}$.   Then $\wh \Delta$ is the unique maximal infinite dihedral subgroup containing $\Delta$.

Hypothesis~(C\tp iii) holds by \cite{BL_CAT0} and \cite{Wegner}.
\end{proof}

Here is an example where the lemma applies.   Let $G$ be a space form group, that is, a finite group so that for every prime $p$, all subgroups of order $p^2$ and all subgroups of order $2p$ are cyclic.   Then according to Madsen--Thomas--Wall \cite{MTW}, $G$ acts freely and smoothly on a sphere $S^n$ for some $n$.   By triangulating the quotient, we can assume this action is PL.   Then $G$ acts semifreely on the suspension $K = \Sigma S^n$ with a two-point fixed set.  

This is of interest because the isotropy groups of any pseudo-free PL action on a manifold are space form groups, and the construction above shows that all such groups arise as isotropy groups.

\providecommand{\bysame}{\leavevmode\hbox to3em{\hrulefill}\thinspace}
\providecommand{\MR}{\relax\ifhmode\unskip\space\fi MR }
\providecommand{\MRhref}[2]{%
  \href{http://www.ams.org/mathscinet-getitem?mr=#1}{#2}
}
\providecommand{\href}[2]{#2}

\end{document}